\documentclass[A4paper,10pt,draft]{amsart}
\parindent 0pt
\usepackage{amssymb,amsmath,amsfonts,amsthm}
\usepackage[utf8]{inputenc}
\usepackage[english]{babel}
\usepackage[dvipsnames]{xcolor}
\usepackage{mathtools}
\mathtoolsset{showonlyrefs}

\newtheorem{theorem}{Theorem}[section]
\newtheorem{corollary}{Corollary}[section]
\newtheorem{lemma}{Lemma}[section]
\newtheorem{proposition}{Proposition}[section]
\newtheorem{definition}{Definition}[section]
\newtheorem{remark}{Remark}
\newtheorem{example}{Example}

\def\<{\langle}
\def\>{\rangle}
\begin{document}
\title[Gabor Frames]{Pseudodifferential operators on time-frequency invariant Banach spaces and applications to Gabor Frames}
\author{Gianluca Garello, Alessandro  Morando}
\maketitle
\begin{abstract}
Starting from the study of pseudodifferential operators with completely periodic symbols,  we obtain results of continuity and invertibility of a class of Gabor operators on time-frequency invariant Banach spaces. As an applications we find sufficient conditions for the existence of Gabor frames on $L^2$, associated to a general lattice $L \mathbb Z^{2d}$, where $L$ is an invertible square matrix.

\end{abstract}
\textbf{Keywords.} Frame Theory, Gabor Frames, Pseudodifferential Operators .\\
\textbf{MSC2020.} 42C15, 35S05, 47G30, 47B38 .

\maketitle
\tableofcontents
\section{Introduction}\label{INT}
Let us consider a continuous signal
$
f=f(t)\in L^2(\mathbb R^d),
$
(where $t$ stands for \lq\lq time\rq\rq). The continuous {\em Time-Frequency} (TF) representation of $f$ is
\begin{equation}\label{TFrep}
	f=\frac1{(g,\gamma)}\iint_{\mathbb R^d\times\mathbb R^d}V_gf(x,\omega)M_\omega T_x\gamma\, dx\,d\omega\,, \quad g,\gamma\in L^2(\mathbb R^d), \,\, (g,\gamma)\ne 0.
\end{equation}
In \eqref{TFrep}, $(\cdot,\cdot)$ denotes the inner product in $L^2(\mathbb R^d)$, with norm $\Vert f\Vert^2_{2}:=(f,f)$ and $V_g f(x,\omega):=(f, M_\omega T_x g)$ is the  {\em short-time Fourier transform} (STFT) with respect to the {\em analysis window} $g=g(t)\in L^2(\mathbb R^d)$.
$M_\omega T_xh(t):=e^{2\pi i\omega\cdot t} h(t-x)$ is the {\em time-frequency shift} of a function $h=h(t)$, with parameter $(x,\omega)\in\mathbb R^{2d}$, whereas $\gamma\in L^2(\mathbb R^d)$ is the so-called {\em reconstruction (synthesis) window}.
\newline
Formula \eqref{TFrep} looks as a representation of the signal $f$ as a continuous superposition of time-frequency shifts of the window $\gamma$ with \lq\lq coefficients\rq\rq provided by the STFT of $f$ with respect to the window $g$. This is a fundamental counterpart of the Fourier inversion formula (which is just a {\em frequency} representation of a signal $f$) where the involved elementary functions $M_\omega T_x\gamma$ are in $L^2(\mathbb R^d)$. 

\smallskip
Due expecially to numerical purposes, one of the main task of Time Frequency Analysis is replacing the {\em continuous} representation in \eqref{TFrep} by an analogous {\em discrete} representation in terms of  countably many   shifts $M_{\beta k}T_{\alpha h}\gamma$, for $(h,k)\in\mathbb Z^d\times\mathbb Z^d$, where the STFT $V_gf$ is sampled at the {\em discrete lattice} made by points $(\alpha h, \beta k)$, with the {\em width parameters} $\alpha, \beta>0$, that is
\begin{equation}\label{discr_exp}
	f=\sum\limits_{h,k\in\mathbb Z^d}(f, M_{\beta k}T_{\alpha h}g)M_{\beta k}T_{\alpha h}\gamma\,.
\end{equation}
Differently from the continuous case where $g$ and $\gamma$ do not need to satisfy any condition unless $(g,\gamma)\neq 0$, in order to get the discrete representation formula \eqref{discr_exp} the window functions $g$ and $\gamma$ cannot be chosen arbitrarily. First, one needs that the analysis window $g$ and the width parameters $\alpha$, $\beta$ are taken such that $\sum\limits_{h,k\in\mathbb Z^d}\vert(f, M_{\beta k}T_{\alpha h}g)\vert^2$ is equivalent to $\Vert f\Vert^2_{L^2}$; under such assumption on $g$, $\alpha$, $\beta$, it is possible to find a proper reconstruction window $\gamma$ (called a {\em dual window} of $g$), obeying the same conditions as $g$ and making \eqref{discr_exp} to be satisfied, see \cite[Proposition 5.2.1]{GRO1}. This naturally leads to the study of {\em Gabor frames}. For a nontrivial $g\in L^2(\mathbb R^d)\setminus\{0\}$ and given positive numbers $\alpha, \beta$, the system 
\begin{equation}\label{g-alpha-beta}
\mathcal G(g,\alpha,\beta):=\{M_{\beta k}T_{\alpha h} g\,,\,\,(h,k)\in\mathbb Z^d\times\mathbb Z^d\}
\end{equation}
is said to be a {\em Gabor frame} provided that for some constants $A, B>0$:
\begin{equation}\label{Gframe}
A\Vert f\Vert^2_{2}\le\sum\limits_{h,k\in\mathbb Z^d}\vert (f, M_{\beta k}T_{\alpha h}g)\vert^2\le B\Vert f\Vert^2_{2}\,,\quad\forall\, f\in L^2(\mathbb R^d)\,.
\end{equation}
A classical problem in Time Frequency Analysis is to find conditions on triples $(g, \alpha, \beta)$ under which \eqref{g-alpha-beta} is a Gabor frame. The available literature devoted to such a problem is wide; among the others, we recall that a necessary condition for a system \eqref{g-alpha-beta} to be a Gabor frame is that $\alpha\beta\le 1$, see \cite{DAU}, \cite{GRO1} and, for a detailed survey on these results, \cite{Hei07}. Gr\"ochenig - St\"ockler \cite{GS1} are able to show that \eqref{g-alpha-beta} is a Gabor frame if and only if $\alpha\beta<1$, in the case when the window $g\in L^2(\mathbb R^d)$ is a {\em totally positive function of finite type at least two}. We also quote Dai - Sun \cite{Dai-Sun}, where the authors focus on the characteristic function $\chi_{[0,c]}$ of a real interval and study the problem of finding necessary or sufficient conditions on the parameters $\alpha, \beta, c>0$ such that $\mathcal G(\chi_{[0,c]},\alpha,\beta)$ is a Gabor frame.

\smallskip
For every system $\mathcal G(g,\alpha,\beta)$ one can formally define the {\em Gabor operator} as
\begin{equation}\label{Sg}
S_gf:=\sum\limits_{h,k\in\mathbb Z^d}(f, M_{\beta k}T_{\alpha h}g)M_{\beta k}T_{\alpha h}g\,,\quad f\in L^2(\mathbb R^d)\,.
\end{equation}
It is well-known that  $\mathcal G(g,\alpha,\beta)$ is a Gabor frame if and only $S_g$ is invertible as a linear bounded operator on $L^2(\mathbb R^d)$, see \cite{Hei11}, \cite{BogGar2020}.
In this case  there exists $\gamma\in L^2(\mathbb R^d)$ such that $\mathcal G(\gamma,\alpha,\beta)$ is still a Gabor frame, said \textit{dual frame} of $\mathcal G(g,\alpha,\beta)$. Moreover the discrete representation \eqref{discr_exp} holds true for $f\in L^2(\mathbb R^d)$, with unconditional convergence. More precisely one can take $\gamma=S^{-1} g$, see for instance \cite[Proposition 5.2.1]{GRO1}.
The above considerations motivate the interest in studying the $L^2$ boundedness and invertibiliy of the Gabor operator \eqref{Sg}.
\newline
Let us notice that in \cite{BogGar2020} the  Gabor operator \eqref{Sg} is expressed in terms  of a pseudodifferential operator with Kohn-Nirenberg quantization, that is
\begin{equation} \label {GPS}
S_g f(x)=\int e^{2\pi ix\cdot\omega}p(x,\omega)\widehat f(\omega)d\omega\,,	
\end{equation}  
where the symbol $p(x, \omega)$ reads as
\begin{equation}\label{symbG}
p(x,\omega):=\sum_{h,k\in\mathbb Z^d}q(x+\alpha h, \omega+\beta k)\quad\mbox{with}\quad q(x,\omega):=e^{-2\pi ix\cdot\omega} g(x)\overline{\widehat{g}}(\omega).
\end{equation}
The periodicity of the symbol in both $x$ and $\omega$ assures that $p(x,\omega)$ is bounded, but prevents any decay behavior at infinity. Therefore applying to \eqref{GPS} {\em Calder\'{o}n-Vaillancourt Theorem} of $L^2-$boundedness, in \cite{BogGar2020} the authors find sufficient conditions on $g$, and on $\alpha$, $\beta$ such that $S_{g}$ is continuous and invertible on $L^2(\mathbb R^d)$. 
\newline
Concerning the analysis of pseudodifferential operators with partially periodic symbols, we quote \cite{RuzTur}, where pseudodifferential operators with symbols on compact Lie groups are studied; this includes  symbols $p(x,\omega)$ with periodic $x\in \mathbb T^d$ and discrete $\omega \in\mathbb Z^d$. The converse case of $p(x,\omega)$ discrete in $x$ and periodic in $\omega$, which corresponds to the so called \textit{pseudo-difference operators} are studied \cite{BoKiRu20} and \cite{KumMon2023}. Fourier multipliers with periodic symbol $p(\omega)$ were earlier analyzed in \cite{deL65}, \cite{Iga69}. $L^2$ boundedness and invertility of pseudodifferential operators with  symbols $p(x,\omega)$ periodic both in $x$ and $\omega$ are studied in our previous work \cite{GMArx}.
\newline
For $q(x,\omega)$ such that the series in \eqref{symbG} is convergent, we say $p(x,\omega)$ \lq\lq periodization\rq\rq of $q(x,\omega)$.
\newline
Let us observe that the lattice widths $\alpha, \beta$ satisfying the condition in \cite{BogGar2020} are too small, also in one dimensional case, for having any computational utilities. 
\newline
The aim of this paper is to give results of continuity and invertibility for pseudodifferential operators with \textit{periodized} symbol and provide applications for finding conditions on the the window $g$ and the lattice widths $\alpha, \beta$ for a Gabor system $\mathcal G(g, \alpha, \beta)$ to be a Gabor frame. The matter is studied in the framework of generalized $L$-periodic symbols, for a general invertible  matrix $L\in GL(2d)$, that is $T_{L\kappa}p=p$, for all $\kappa \in \mathbb Z^{2d}$. 
In this framework a key role is played by the Poisson summation formula, which is a classical tool of the time-frequency analysis, see \cite[\S 1.4]{GRO1} and the references given there.
\newline
The paper develops as follows. In Section \ref{TOOLS} we introduce and discuss the technical tools needed later on. Particularly we define  the periodic distributions with respect to a generic invertible matrix. Moreover we introduce the periodization of functions and focus on the generalized Poisson formula. In Section \ref{GENSYMB} we introduce the pseudodifferential operators with general $0\le \tau\le 1$ quantization and completely periodic symbol, moreover we recall the related continuity and invertibility results obtained in \cite{GMArx}. Such results are then applied to the case of \textit{periodized} symbols $q_L(x, \omega)=\sum_{\kappa\in \mathbb Z^{2d}}q(z+L\kappa)$, $z=(x,\omega)\in \mathbb R^{2d}$, $q\in L^1(\mathbb R^{2d})$.  In Section \ref{Gf_sec} we apply the previous results for obtaining continuity and invertibility   for a generalized Gabor  operator
\begin{equation}\label{GF oprt0}
S^L_{g,\gamma}u:=\sum\limits_{\kappa\in\mathbb Z^{2d}}(u,\pi_{-L\kappa}g)_{L^2}\,\pi_{-L\kappa}\gamma\,,
\end{equation}
where $\pi_{-L\kappa}$ is the time-frequency shift introduced in the next \eqref{TFS} and $L$ an invertible matrix of order $2d$.
\newline
 Continuity and invertibility, under suitable assumption on $g, \gamma$, are obtained in general \lq\lq time-frequency invariant\rq\rq Banach spaces, see \eqref{PST8}. The sufficient conditions on $L$, $g$ and $\gamma$, found in Section \ref{Gf_sec} for obtaining the continuity and invertibility of the related Gabor operator, are expressed in implicit terms. Thus in Section \ref{diag_sct} we restrict our analysis to the case of diagonal matrix in order to get direct regularity condition on $g$, $\gamma$ and on the \textit{volume}  $\vert \textup{det}L\vert$ of the matrix, assuring the invertibility of the Gabor operator in $L^2(\mathbb R^d)$. Applying this result to the case $\gamma=g$, we get a sufficient condition for the Gabor system $\mathcal G(L,g)$ to be a  Gabor frame and explicit estimates of the frame bounds $A$ and $B$ in \eqref{Gframe}, see Corollary \ref{GFB}.
\newline
 In Appendix \ref{APP} the details of the proof of Proposition \ref{WS-prop:1} are given.
\section {Notations and tools}\label{TOOLS}
In whole the paper we will use the following notations and tools:
\begin{itemize}
\item $\mathbb R^d_0=\mathbb R^d\setminus\{0\}$, $\mathbb Z^d_0=\mathbb Z^d\setminus\{0\}$;
\smallskip

\item $x\cdot \omega=\langle x,\omega\rangle=\sum_{j=1}^d x_j\omega_j$, \quad $x,\omega\in \mathbb R^d$;
\item $(f,g)=\int f(x)\bar g(x)\, dx$ is the inner product in $ L^2(\mathbb R^d)$;
\item $\mathcal F f(\omega)=\hat f(\omega)=\int f(x) e^{-2\pi i x\cdot \omega} \, dx$  is the Fourier transform of $f\in \mathcal S(\mathbb R^d)$, with the well known extension to $u\in \mathcal S'(\mathbb R^d)$;
\item $GL(d)$ is the space of invertible matrices of size $d\times d$.
\end{itemize}

In the following we will use in many cases the matrix in $GL(2d)$ 
\begin{equation}\label{SYMPMAT}
\mathcal J=\left(
\begin{array}{cc}
0&-I\\
I&0
\end{array}
\right),
\end{equation} 
where here and below $0$ and $I$ are respectively the $d\times d$ zero and unit matrices. Through the matrix $\mathcal J$, we define  the \textit{symplectic} form, see \cite[\S 9.4]{GRO1},
\begin{equation}\label{SYMP}
[z_1, z_2]:= \langle z_1, \mathcal J z_2\rangle= x_2\cdot\omega _1-x_1\cdot\omega_2\quad , \quad z_1=(x_1, \omega_1),\, z_2=(x_2, \omega_2) \in \mathbb R^{2d}. 
\end{equation} 

\subsection{Time frequency shifts (tfs)}\label{SubTF}
For $z=(x,\omega)\in \mathbb R^{2d} $ we define the operators:
\begin{align}
&T_x f(t)=f(t-x)  &\text{(translation)};\label{TR}\\
&M_\omega f(t)=e^{2\pi i \omega \cdot t} f(t) &\text{(modulation)},\label{MO}\\
&\pi_z f=M_\omega T_x f  &\text{(time-frequency shift)},\label{TFS}
\end{align}
with suitable extension to distributions in $\mathcal D'(\mathbb R^{d})$.\\
For $u\in\mathcal S'(\mathbb R^{d})$, $z=(x,\omega)\in \mathbb R^{2d}$, the next properties easily follow:
\begin{align}
& T_x M_\omega u=e^{-2\pi i x\cdot\omega}M_\omega T_x u \label{SCAMBIO},\\
&\mathcal F(T_x u)=M_{-x}\mathcal Fu, &&\mathcal F^{-1}(T_x u)=M_x\mathcal F^{-1} u \label{TR1},\\
&\mathcal  F(M_\omega u)=T_\omega\mathcal Fu, &&\mathcal F^{-1}(M_\omega u)=T_{-\omega}\mathcal F^{-1} u\label{MO2},\\
&\mathcal F(\pi_z u)=e^{2\pi i x\cdot \omega}\pi_{\mathcal J^T z}\mathcal Fu; &&\mathcal F^{-1}(\pi_z u)=e^{2\pi i x\cdot \omega}\pi_{\mathcal J z}\mathcal F^{-1} u.\label{TM}
\end{align}

\subsection{Short-time Fourier transform, Modulation spaces}
\begin{definition}\label{stft-def}
For a fixed nontrivial function $g\in L^2(\mathbb R^d)$ the short-time Fourier transform (STFT in the sequel) of $f\in L^2(\mathbb R^d)$ with respect to $g$ is defined as
\begin{equation}\label{stft}
V_gf(z):=(f,\pi_z g)=\int_{\mathbb R^d}f(t)e^{-2\pi i \omega\cdot t}\overline{g(t-x)}dt\,,\quad z=(x,\omega)\in\mathbb R^{2d}\,.
\end{equation}  
\end{definition}
It is well-known that for $f,g\in L^2(\mathbb R^d)$, $V_gf$ is a uniformly continuous function on $\mathbb R^{2d}$, $V_gf\in L^2(\mathbb R^{2d})$ and
\begin{equation*}
\Vert V_gf\Vert_{L^2}=\Vert f\Vert_{L^2}\Vert g\Vert_{L^2}\,,
\end{equation*}
see e.g. \cite[Lemma 3.1.1, Corollary 3.2.2]{GRO1}.
\newline
In the context of the definition above the {\em window function} $g$ is usually chosen to be a smooth cut-off function on a neighborhood of the origin. However, STFT can be usefully studied with much more irregular window functions $g$ or arguments $f$; in principle, $V_gf$ can be suitably extended to be a distribution in $\mathcal S^\prime(\mathbb R^{2d})$ for general $g,f\in\mathcal S^\prime(\mathbb R^d)$, provided the right-hand integral in \eqref{stft} is intended in a formal ``weak'' sense, see again \cite{GRO1}  for details. Moving along this direction, particular interest is devoted to the analysis of the STFT in the case when $g$ or $f$ are taken to belong to {\em weighted modulation spaces}.
\smallskip

 The {\em polynomial weight function} $v$ is defined  for some $s\ge 0$ by
\begin{equation}\label{pol}
v(z)=v_s(z)=(1+\vert z\vert)^{s}\,,\quad\forall\,z\in\mathbb R^{2d}\,.
\end{equation}
A non negative measurable function $m=m(z)$ on $\mathbb R^{2d}$ is said to be a \textit{polynomially moderate} (or \textit{temperate}) weight function if there exists a positive constant $C$ such that
\begin{equation}\label{SOPS1}
m(z_1+z_2)\leq C v(z_1)m(z_2) \quad \text {for all}\, z_1, z_2\in \mathbb R^{2d}.
\end{equation}

For other details about weight functions see \cite[\S 11.1]{GRO1}.

\begin{definition}\label{wms_def}
For $p,q\in[1,+\infty]$, the $m-$weighted space $L^{p,q}_m(\mathbb R^{2d})$ consists of all (Lebesgue) measurable functions $F$ on $\mathbb R^{2d}$, such that the norm
\begin{equation*}
\Vert F\Vert_{L^{p,q}_m}:=\left(\int_{\mathbb R^d}\left(\int_{\mathbb R^d}\vert F(x,\omega)\vert^p m(x,\omega)^p dx\right)^{q/p}d\omega \right)^{1/q}
\end{equation*}
is finite (with the expected modification in the case when at least one among $p$ or $q$ equals $+\infty$).
\end{definition}
\begin{definition}\label{M_def}
For a fixed $g\in\mathcal S(\mathbb R^d)\setminus\{0\}$ and $p,q\in[1,+\infty]$, the $m-$weighted modulation space $M^{p,q}_m(\mathbb R^{d})$ consists of all tempered distributions $f\in\mathcal S^\prime(\mathbb R^{d})$ such that $V_gf\in L^{p,q}_m(\mathbb R^{2d})$, provided with the natural norm
\begin{equation}\label{M-norm}
\Vert f\Vert_{M^{p,q}_m}:=\Vert V_gf\Vert_{L^{p,q}_m}\,.
\end{equation}
\end{definition}
The definition of the space $M^{p,q}_m$ is independent of the choice of the window $g$ in the STFT, and $M^{p,q}_m$ turns out to be a Banach space with respect to a norm \eqref{M-norm} corresponding to any nonzero window $g$ (norms \eqref{M-norm} related to different windows are shown to be equivalent to each other). We definitely address the reader to \cite{GRO1} and the references therein for a thorough study of weighted modulation spaces and their properties. Following \cite{GRO1}, in the case of $p=q$ we denote $M^p_m:=M^{p,p}_m$ and in the {\em unweighted case} of $m(x,\omega)\equiv 1$ we write $M^{p,q}$ and $M^p$ instead of $M^{p,q}$. 
\subsection{Frames in Hilbert spaces} 
Let $\mathcal H$ be a separable Hilbert space, with inner product $(\cdot,\cdot)$ and norm $\Vert x\Vert^2=(x,x)$. 
A sequence $\{x_n\}_{n\in\mathbb N}$ in $\mathcal H$ is a \textit{frame} if there exist two positive constants $A,B$ such that.
\begin{equation}\label{FRAME}
A\Vert x\Vert^2\leq \sum_{n\in \mathbb N}\vert  (x, x_n)\vert^2\leq B \Vert x\Vert^2, \quad x\in \mathcal H.
\end{equation}

\begin{theorem}\label{THEFRAME}
Consider the sequence $\{x_n\}_{n\in\mathbb N}$ in the separable Hilbert space $\mathcal H$ and the operator formally defined by  $S:x\to Sx=\sum_{n\in \mathbb N}(x,x_n)x_n$ , then the following statements are equivalent.
\begin{itemize}
\item [i)] $\{x_n\}_{n\in \mathbb N}$ is a frame;
\item[ii)] there exist two positive constants $A,B$ such that 
\begin{equation}\label{OPINEQ}
A\Vert x\Vert^2\leq (Sx, x)\leq B\Vert x\Vert^2;
\end{equation}
\item[iii)] the operator $S$ is a bounded invertible operator in $\mathcal H$, with bounded inverse.
\end{itemize}
More precisely we have $A=\frac{1}{\Vert S^{-1}\Vert}$,  $B=\Vert S\Vert$, where $\Vert S\Vert$ stands for the operator norm. 
\end{theorem}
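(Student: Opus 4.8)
The plan is to funnel all three conditions through the single identity $(Sx,x)=\sum_{n\in\N}|(x,x_n)|^2$ and then deduce invertibility from the spectral theory of bounded positive self-adjoint operators. I would first observe that whenever the defining series $\sum_{n}(x,x_n)x_n$ converges in $\mathcal H$, pairing it with $x$ and passing to the limit gives $(Sx,x)=\sum_{n\in\N}|(x,x_n)|^2$; in particular boundedness of $S$ forces the \emph{Bessel} bound $\sum_n|(x,x_n)|^2\le\|S\|\,\|x\|^2$. Conversely, and this is the technical heart, the Bessel bound $\sum_n|(x,x_n)|^2\le B\|x\|^2$ already guarantees that convergence: it says exactly that the \emph{analysis operator} $C\colon\mathcal H\to\ell^2(\N)$, $Cx=((x,x_n))_n$, is bounded with $\|C\|^2\le B$; its Hilbert adjoint is the \emph{synthesis operator} $C^{*}((c_n)_n)=\sum_n c_nx_n$, whose boundedness yields norm convergence of the synthesis series, and $S=C^{*}C$ is then a genuine bounded, self-adjoint, positive operator with $(Sx,x)=\|Cx\|_{\ell^2}^2$. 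Thus \lq\lq the series converges nicely\rq\rq, \lq\lq $S$ is bounded\rq\rq\ and the Bessel bound are all equivalent, and each of (i), (ii), (iii) entails them.

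Granting the identity, (i) $\Leftrightarrow$ (ii) is immediate, since the two-sided estimate $A\|x\|^2\le\sum_{n}|(x,x_n)|^2\le B\|x\|^2$ in \eqref{FRAME} is word for word the estimate $A\|x\|^2\le(Sx,x)\le B\|x\|^2$ in \eqref{OPINEQ}. For (ii) $\Leftrightarrow$ (iii) I would use that $S$ is bounded, positive and self-adjoint. The upper inequality in \eqref{OPINEQ} is equivalent to $\|S\|=\sup_{\|x\|=1}(Sx,x)\le B$, so the optimal $B$ is $\|S\|$. The lower inequality $(Sx,x)\ge A\|x\|^2$ together with Cauchy--Schwarz gives $\|Sx\|\ge A\|x\|$, so $S$ is bounded below, hence injective with closed range; self-adjointness then gives $\overline{\operatorname{ran}S}=(\ker S)^\perp=\mathcal H$, so $S$ is surjective and therefore boundedly invertible with $\|S^{-1}\|\le 1/A$, which is (iii). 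Conversely, a bounded, positive, self-adjoint, invertible $S$ has $\sigma(S)\subseteq[\,1/\|S^{-1}\|,\,\|S\|\,]$ --- the lower endpoint because $\sigma(S^{-1})=\{\lambda^{-1}:\lambda\in\sigma(S)\}$ and $\|S^{-1}\|=\max\sigma(S^{-1})$ --- and the spectral bound for positive self-adjoint operators gives $(1/\|S^{-1}\|)\|x\|^2\le(Sx,x)\le\|S\|\,\|x\|^2$. This is (ii), and reading off $A=1/\|S^{-1}\|$, $B=\|S\|$ settles the \lq\lq more precisely\rq\rq\ clause.

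The step I expect to be the main obstacle is the preliminary one: showing that the merely formal operator $S$ is genuinely bounded, i.e. extracting norm convergence of the synthesis series $\sum_n(x,x_n)x_n$ and the factorization $S=C^{*}C$ from the Bessel bound alone, rather than assuming it. Once this is in place, the passage to invertibility (a positive self-adjoint operator bounded below is boundedly invertible) and the identification of the sharp frame bounds with $\|S\|$ and $1/\|S^{-1}\|$ are standard Hilbert-space arguments.
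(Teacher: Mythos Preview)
Your argument is correct and is the standard one: the identity $(Sx,x)=\sum_n|(x,x_n)|^2$ via the factorization $S=C^{*}C$, followed by the spectral characterization of a bounded positive self-adjoint operator with bounded inverse. The paper does not give its own proof of this theorem; it simply refers to \cite{Hei11} and \cite{BogGar2020}, and what you have written is essentially the argument found in those references (in particular Heil's treatment of frames and the frame operator). So there is nothing to compare: your proposal fills in exactly the proof the paper chose to outsource.
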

For the proof see \cite{Hei11}, \cite{BogGar2020}.

\subsection{Periodic distributions}\label{PDSEC}
Consider now and in the whole paper  $L=(a_{ij})\in GL(n)$, $n\in \mathbb N$.
We say that  a distribution $u\in\mathcal D'(\mathbb R^n)$ is $L-$periodic if
\begin{equation}\label{PD}
T_{L\kappa}u=u\quad,\quad\text{for any}\quad \kappa\in\mathbb Z^n.
\end{equation}
Setting $\mathbb T^n_L:=\mathbb R^n/L\mathbb Z^n$, we identify the set of $L-$periodic distributions with the space $\mathcal D'(\mathbb T^n_L)$ of linear continuous forms on $C^\infty(\mathbb T^n_L)$. Notice that  $\mathcal D'(\mathbb T^n_L)\subset\mathcal S'(\mathbb R^n)$.\\
For any $L-$periodic $u\in \mathcal D'(\mathbb R^n)$ , the distribution $v=u(L \cdot)$ is $I_n-$periodic, where $I_n$ is the unit matrix of size $n$; we say in short that $v$ is $1-${\em periodic}. Thus, up to a linear change of  variable, we may always reduce to consider the case of $1-$ periodic distributions and set $\mathbb T^n=\mathbb T^n_{I_n}$. \\
Any $u\in \mathcal D'(\mathbb T^n_L)$ admits the  Fourier expansion
\begin{equation}\label{FL5}
u= \sum_{\kappa\in\mathbb Z^n} c_{\kappa,L}(u)e^{2\pi i \langle L^{-T} k,\cdot\rangle},
\end{equation}
with the Fourier coefficients
\begin{equation}\label{FL6}
c_{\kappa,L}(u):=c_\kappa(u(L\cdot))= \frac{1}{\vert\textup{det} L\vert}\langle  u, e^{-2\pi i \langle  L^{-T}\kappa, \cdot\rangle}   \rangle_{\mathbb T^n_L}.
\end{equation}
For short in the following we set $c_\kappa(u)=c_{\kappa, L}(u)$.\\
For details about the rigorous calculus of Fourier coefficients in $\mathcal D'(\mathbb T^n)$ see \cite[\S 7.2]{HOR0} and the Appendix A in \cite{GMArx}.\\

Consider $L^p(\mathbb T^n_L)$, $1\leq p <\infty$,  the set of measurable $L$-periodic functions on $\mathbb R^n$ such that $\Vert f\Vert_{L^p(\mathbb T^n_L)}=\int_{L[0,1]^n} \vert f(x)\vert ^p\, dx<\infty$, with obvious modification for the definition of $L^\infty(\mathbb T^n_L)$.\\
Then for $f\in L^1(\mathbb T^n_L)$ the following:\\
\begin{equation}\label{FS1}
f(x)=\sum_{\kappa\in \mathbb Z^n} c_\kappa (f) e^{-2\pi i L^{-T}\kappa\cdot x}
\end{equation}
holds with convergence  in $\mathcal S'(\mathbb R^n)$, and 
\begin{equation}\label{FS2}
c_\kappa(f)=\frac{1}{\vert \textup{det}L\vert }\int_{L[0,1]^n} e^{2\pi i L^{-T}\kappa\cdot x} f(x)\, dx.
\end{equation} 
\begin{remark}\label{LATTICE}
It can be useful to write the Fourier expansion of $u\in\mathcal D'(\mathbb T^n_L)$ in terms of the lattice  $\Lambda=L \mathbb Z^n$:
\begin{equation}\label{FL3}
u=\sum_{\mu\in \Lambda^{\bot}}\hat u(\mu)e^{2\pi i\langle  \mu,\cdot \rangle},
\end{equation}
with \begin{equation}\label{FL4}
\hat u(\mu):=\frac{1}{\textup{vol}(\Lambda)}\langle  u, e^{-2\pi i \langle  \mu, \cdot\rangle}   \rangle_{\mathbb T^n_L}.
\end{equation}
Here $\Lambda^\perp:=L^{-T}\mathbb Z^n$ and  $\textup{vol}(\Lambda):=\vert \textup{det} L\vert=\text{meas} \,(L[0,1]^n)$  are respectively called dual lattice and volume of $\Lambda$.
\end{remark}
\subsection{Periodization and Poisson formula}\label{per_sec}
  For  $f\in L^1(\mathbb R^n)$, $L\in GL(n)$, we call $L$-{\em periodization} of $f$  the function formally defined by
\begin{equation}\label{per_eqt:1}
F_L(x):=\sum\limits_{\kappa\in\mathbb Z^n}f(x+L\kappa)\,,\quad x\in\mathbb R^n\,.
\end{equation} 
Provided that the series in the right-hand side is convergent, at least for almost every $x\in\mathbb R^n$, $F_L$  is by construction $L-$periodic.
\newline
Later on, it will be useful to refer to the partial sums of the series in the right-hand side of \eqref{per_eqt:1}; thus for any non negative integer $h\in\mathbb Z_+$, we set
\begin{equation}\label{per_eqt:3}
F_{L,h}(x):=\sum\limits_{\vert\kappa\vert\le h}f(x+L\kappa)\,,\quad x\in\mathbb R^n\,,
\end{equation}
where, hereafter, the {\em order} $\vert\kappa\vert$ of $\kappa=(k_1,\dots,k_n)\in\mathbb Z^n$ is defined by $\vert\kappa\vert:=\sum\limits_{j=1}^n\vert k_j\vert$.

\smallskip
\noindent
On the consistence of the definition of the $L-$periodization $F_L$, the following result can be proved.
\begin{proposition}\label{per_prop:1}
Let $f\in L^1(\mathbb R^n)$ and $L\in GL(n)$ be given. Then the series in the right-hand side of \eqref{per_eqt:1} is absolutely convergent for a.e. $x\in\mathbb R^n$, $F_L\in L^1(L[0,1)^n)$ and
\begin{equation}\label{per_eqt:3.1}
\int_{L[0,1]^n}F_L(x)dx=\int_{\mathbb R^n} f(x)dx\,.
\end{equation}
Moreover, for any real number $N>n$, we have $F_L(x)(1+\vert x\vert)^{-N}\in L^1(\mathbb R^n)$ and
\begin{equation}\label{per_eqt:4}
\lim\limits_{h\to +\infty}\int\left\vert F_{L,h}(x)-F_L(x)\right\vert (1+\vert x\vert)^{-N}dx=0\,.
\end{equation}
In particular, from the convergence \eqref{per_eqt:4} it follows that the series in the right-hand side of \eqref{per_eqt:1} converges to $F_L$ in $\mathcal S^\prime(\mathbb R^n)$.
\end{proposition}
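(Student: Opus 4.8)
The plan is to exploit the fact that the fundamental cell $Q:=L[0,1)^n$ tiles $\R^n$ under the lattice $L\mathbb{Z}^n$, i.e. the translates $Q+L\kappa$, $\kappa\in\mathbb{Z}^n$, are pairwise disjoint and cover $\R^n$ up to a null set. First I would establish the a.e.\ absolute convergence, the membership $F_L\in L^1(Q)$, and the identity \eqref{per_eqt:3.1} in one stroke via Tonelli's theorem. Applying it to the nonnegative series $\sum_\kappa|f(\cdot+L\kappa)|$ and changing variables $y=x+L\kappa$ in each summand gives
\begin{equation*}
\int_Q\sum_{\kappa\in\mathbb{Z}^n}|f(x+L\kappa)|\,dx=\sum_{\kappa\in\mathbb{Z}^n}\int_{Q+L\kappa}|f(y)|\,dy=\int_{\R^n}|f(y)|\,dy=\Vert f\Vert_{L^1}<\infty,
\end{equation*}
where the middle equality uses the tiling. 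Finiteness of the left-hand integral forces the integrand to be finite for a.e.\ $x\in Q$, which is precisely the asserted a.e.\ absolute convergence on $Q$ (and hence, by $L$-periodicity, on all of $\R^n$), and it simultaneously yields $F_L\in L^1(Q)$. Repeating the same computation with $f$ in place of $|f|$, now legitimate because the series converges absolutely in $L^1(Q)$, produces \eqref{per_eqt:3.1}.

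For the weighted integrability I would again split $\R^n$ into the tiles $Q+L\kappa$ and use periodicity $F_L(x)=F_L(x-L\kappa)$ to write
\begin{equation*}
\int_{\R^n}|F_L(x)|(1+|x|)^{-N}\,dx=\sum_{\kappa\in\mathbb{Z}^n}\int_Q|F_L(y)|\,(1+|y+L\kappa|)^{-N}\,dy.
\end{equation*}
The key point is that on the bounded set $Q$ one has $|y|\le C_Q$, while invertibility of $L$ guarantees $|L\kappa|\ge c|\kappa|$ for a constant $c>0$ depending only on $L$; hence $|y+L\kappa|\ge c|\kappa|-C_Q$, which gives a uniform estimate $(1+|y+L\kappa|)^{-N}\le C(1+|\kappa|)^{-N}$. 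The right-hand side is then dominated by $C\Vert F_L\Vert_{L^1(Q)}\sum_{\kappa}(1+|\kappa|)^{-N}$, and this lattice sum converges exactly when $N>n$, which is the hypothesis. This comparison of a lattice sum against the threshold $N>n$ (equivalently, counting lattice points of a given order $|\kappa|$) is the only genuinely quantitative step, so I expect it to be the main technical point; everything else is bookkeeping built on the tiling.

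Finally, for \eqref{per_eqt:4} I would write $F_{L,h}-F_L=-\sum_{|\kappa|>h}f(\cdot+L\kappa)$, so that the integrand is bounded pointwise by the tail $\sum_{|\kappa|>h}|f(\cdot+L\kappa)|\,(1+|\cdot|)^{-N}$. For a.e.\ $x$ this tail tends to $0$ as $h\to+\infty$ (it is the remainder of the a.e.\ convergent series from the first step), and for every $h$ it is dominated by $G(x)(1+|x|)^{-N}$, where $G$ denotes the $L$-periodization of $|f|$; applying the weighted-integrability step to $|f|$ shows this dominating function lies in $L^1(\R^n)$. Dominated convergence then yields \eqref{per_eqt:4}. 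The $\mathcal S'$-convergence follows immediately: given $\varphi\in\mathcal S(\R^n)$, its rapid decay provides $|\varphi(x)|\le C_N(1+|x|)^{-N}$ for any $N>n$, whence $|\langle F_{L,h}-F_L,\varphi\rangle|\le C_N\int|F_{L,h}(x)-F_L(x)|(1+|x|)^{-N}\,dx\to 0$.
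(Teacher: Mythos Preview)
Your proposal is correct and follows essentially the same approach as the paper: Tonelli on the fundamental tiling to get a.e.\ absolute convergence and $F_L\in L^1(Q)$, then dominated convergence (with the periodization $G$ of $|f|$ as majorant) for \eqref{per_eqt:4}, and finally the $\mathcal S'$-convergence via rapid decay of test functions. The only cosmetic difference is that the paper isolates your weighted-integrability step as a separate lemma (Lemma~\ref{per_lemma:1}), proved in the $1$-periodic setting with an explicit lower bound $|x|\ge(\sqrt{2})^{1-n}(|\kappa|-n)$ on $Q_\kappa$, whereas you prove the equivalent estimate $(1+|y+L\kappa|)^{-N}\le C(1+|\kappa|)^{-N}$ inline via $|L\kappa|\ge c|\kappa|$; the substance is identical.
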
 
We premise the following result to the proof of Proposition \ref{per_prop:1}.
\newline
\begin{lemma}\label{per_lemma:1}
Let $G=G(x)$ be an $L-$periodic function in $\mathbb R^n$ such that $G\in L^1(L[0,1)^n)$. Then $G(x)(1+\vert x\vert)^{-N}\in L^1(\mathbb R^n)$ for any real number $N>n$.
\end{lemma}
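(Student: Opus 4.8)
The plan is to exploit the $L$-periodicity of $G$ to tile $\mathbb{R}^n$ by translates of the fundamental cell $Q:=L[0,1)^n$, thereby reducing the weighted integral over all of $\mathbb{R}^n$ to a single integral over $Q$ multiplied by a convergent lattice sum. First I would write $\mathbb{R}^n=\bigcup_{\kappa\in\mathbb{Z}^n}(Q+L\kappa)$, a partition up to a Lebesgue-null set, and use that the integrand $|G(x)|(1+|x|)^{-N}$ is nonnegative to justify (via Tonelli's theorem) the interchange of integration and summation:
\[
\int_{\mathbb{R}^n}|G(x)|(1+|x|)^{-N}\,dx=\sum_{\kappa\in\mathbb{Z}^n}\int_{Q}|G(y+L\kappa)|(1+|y+L\kappa|)^{-N}\,dy,
\]
where I have substituted $x=y+L\kappa$ in each tile. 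By $L$-periodicity $G(y+L\kappa)=G(y)$, so that each inner integrand carries the fixed factor $|G(y)|$ and only the weight depends on the lattice index $\kappa$.

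The key step is a uniform comparison of the weight. Since $L\in GL(n)$ the set $Q$ is bounded, so $D:=\sup_{y\in Q}|y|<\infty$; writing $\kappa=L^{-1}(x-y)$ for $x=y+L\kappa$ gives $|\kappa|\le\Vert L^{-1}\Vert(|x|+D)$, whence $1+|\kappa|\le C'(1+|x|)$ with $C'$ depending only on $L$ and $D$. Raising to the power $N$ and inverting yields a constant $C=(C')^{N}$ with
\[
(1+|y+L\kappa|)^{-N}\le C\,(1+|\kappa|)^{-N},\qquad y\in Q,\ \kappa\in\mathbb{Z}^n.
\]
I expect this uniform weight bound to be the main (albeit mild) obstacle of the argument, as it is precisely what converts the polynomial decay of $(1+|x|)^{-N}$ into decay in the discrete index $\kappa$; note that it crucially relies on the invertibility of $L$, i.e. on $\Vert L^{-1}\Vert<\infty$.

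Finally I would insert this estimate, pull the $\kappa$-independent quantity $\Vert G\Vert_{L^1(Q)}=\int_{Q}|G(y)|\,dy$ (finite by hypothesis) out of each integral, and obtain
\[
\int_{\mathbb{R}^n}|G(x)|(1+|x|)^{-N}\,dx\le C\,\Vert G\Vert_{L^1(Q)}\sum_{\kappa\in\mathbb{Z}^n}(1+|\kappa|)^{-N}.
\]
The proof then concludes by the classical fact that the lattice sum $\sum_{\kappa\in\mathbb{Z}^n}(1+|\kappa|)^{-N}$ converges exactly when $N>n$: there are $\Theta(m^{n-1})$ indices with order $|\kappa|=m$, so the sum compares with $\int_{1}^{\infty}r^{n-1}(1+r)^{-N}\,dr$, finite iff $N>n$. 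This gives finiteness of the right-hand side under the assumption $N>n$ and hence $G(x)(1+|x|)^{-N}\in L^1(\mathbb{R}^n)$, as claimed.
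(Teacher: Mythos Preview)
Your proof is correct and follows essentially the same approach as the paper: tile $\mathbb R^n$ by translates of a fundamental domain, use periodicity to reduce to $\Vert G\Vert_{L^1(Q)}$ times a convergent lattice sum $\sum_\kappa(1+|\kappa|)^{-N}$. The only cosmetic difference is that the paper first reduces to the case $L=I_n$ and then tiles by unit cubes $Q_\kappa=\kappa+[0,1]^n$, whereas you work directly with the $L$-lattice tiling and absorb the matrix into the constant via $\Vert L^{-1}\Vert$; the weight comparisons and the final convergence argument are the same in substance.
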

\begin{proof}
Without loss of generality let us consider a $1-$periodic  function $G\in L^1(Q)$, with $Q:=[0,1]^n$ the unit cube of $\mathbb R^n$ and $Q_\kappa:=\kappa+Q$, for every $\kappa\in\mathbb Z^n$.  By the countable-additivity of Lebesgue's integral we get, for any $N>n$,
\begin{equation}\label{per_eqt:5}
\int\vert G(x)\vert(1+\vert x\vert)^{-N}dx=\sum\limits_{\kappa\in\mathbb Z^n}\int_{Q_\kappa}\vert G(x)\vert(1+\vert x\vert)^{-N}dx\,.
\end{equation}
On the other hand for $x\in Q_\kappa$ (that is $k_j\le x_j\le k_j+1$ for all $j=1,\dots,n$)
\begin{equation}
\vert x\vert=\sqrt{\sum\limits_{j=1}^n}x_j^2\ge\frac1{(\sqrt 2)^{n-1}}\sum\limits_{j=1}^n\vert x_j\vert\ge \frac1{(\sqrt 2)^{n-1}}(\vert\kappa\vert-n)\,,
\end{equation}
hence splitting the sum in the right-hand side of \eqref{per_eqt:5} we can estimate 
\begin{equation}
\begin{split}
\sum\limits_{\kappa\in\mathbb Z^n}&\int_{Q_\kappa}\vert G(x)\vert(1+\vert x\vert)^{-N}dx \\
&\le \sum\limits_{\vert\kappa\vert\le n}\int_{Q_\kappa}\vert G(x)\vert dx+({\sqrt 2})^{n-1}\sum\limits_{\vert\kappa\vert>n}\frac1{(\vert\kappa\vert-n)^N}\int_{Q_\kappa}\vert G(x)\vert dx\,.
\end{split}
\end{equation}
The $1-$periodicity of $G$ and a change of variable under the integral give
\begin{equation}
\int_{Q_\kappa}\vert G(x)\vert\, dx=\int_{\kappa+Q}\vert G(x+\kappa)\vert\, dx=\Vert G\Vert_{L^1(Q)}\,,\,\,\mbox{for all}\,\,\kappa\in\mathbb Z^n\,,
\end{equation}
hence
\begin{equation}
\int\vert G(x)\vert(1+\vert x\vert)^{-N}dx\le \left(\sum\limits_{\vert\kappa\vert\le n}1+({\sqrt 2})^{n-1}\sum\limits_{\vert\kappa\vert>n}\frac1{(\vert\kappa\vert-n)^N}\right)\Vert G\Vert_{L^1(Q)}\, .
\end{equation}
Since $N>n$, the  series in right-hand side above is convergent. Thus the proof ends by observing that, for some positive constant $C_{n,N}$  depending only on $N$ and the dimension $n$,
\begin{equation}\label{per_eqt:6}
\int\vert G(x)\vert(1+\vert x\vert)^{-N}dx\le C_{n,N}\Vert G\Vert_{L^1(Q)}\,,
\end{equation}
\end {proof}


\smallskip
\noindent
\begin{proof}
[\textit{Proof of Proposition \ref{per_prop:1}.}]  Without loss of generality we set $L=I_n$. Let us consider
\begin{equation}\label{per_eqt:7}
F(x)=\sum\limits_{\kappa\in\mathbb Z^n}f(x+\kappa)\,
\end{equation}
and its partial sums 
\begin{equation}\label{per_eqt:8}
F_h(x):=\sum\limits_{\vert\kappa\vert\le h}f(x+\kappa).
\end{equation}
Take also into account the modules series 
$
\sum\limits_{\kappa\in\mathbb Z^n}\vert f(x+\kappa)\vert\,,
$
and its partial sums
\begin{equation}\label{per_eqt:8.1}
G_h(x):=\sum\limits_{\vert\kappa\vert\le h}\vert f(x+\kappa)\vert\,,\quad\mbox{for}\,\,h\in\mathbb Z_+\,.
\end{equation}
Since $f\in L^1(\mathbb R^n)$, by the countable additivity of Lebesgue's integral and  a change of integration variable, we get
\begin{equation}\label{per_eqt:10}
\sum\limits_{\kappa\in\mathbb Z^n}\int_{Q}\vert f(y+\kappa)\vert dy=\sum\limits_{\kappa\in\mathbb Z^n}\int_{Q_\kappa}\vert f(x)\vert dx=\int\vert f(x)\vert dx<+\infty.
\end{equation}
We can then observe that $\{G_h\}_{h=0}^{+\infty}$ is an increasing sequence and $\int G_h(x)\, dx$ is uniformly bounded by $\Vert f\Vert_{L^1}$ , then by Monotone Convergence Theorem the sequence $\{G_h\}_{h=0}^{+\infty}$ is convergent for a.e. $x\in Q$ and series and integral may be interchanged. Using also \eqref{per_eqt:10} we get
\begin{equation}\label{per_eqt:12}
\int_Q\sum\limits_{\kappa\in\mathbb Z^n}\vert f(x+\kappa)\vert dx=\sum\limits_{\kappa\in\mathbb Z^n}\int_Q\vert f(x+\kappa)\vert dx=\Vert f\Vert_{L^1}\,.
\end{equation}
Let us notice that the convergence of $\{G_h\}_{h=0}^{+\infty}$ a.e. in $Q$ implies the convergence a.e. in $\mathbb R^n$. We can then set
\begin{equation}\label{per_eqt:13}
G(x):=\sum\limits_{\kappa\in\mathbb Z^n}\vert f(x+\kappa)\vert\,,\quad\mbox{for a.e.}\,\,x\in\mathbb R^n\,.
\end{equation} 
Thanks to  \eqref{per_eqt:12}, $G\in L^1(Q)$ and for any integer $h\ge 0$ 
\begin{equation}\label{per_eqt:14}
\vert F_h(x)\vert\le\ G_h(x)\le G(x)\,,\quad\mbox{for a.e.}\,\,x\in\mathbb R^n\,.
\end{equation}
Moreover the series \eqref{per_eqt:7} is absolutely convergent for a.e. $x\in \mathbb R^n$, thus the Dominated Convergence Theorem yields that the point-wise sum
\begin{equation}\label{per_eqt:15}
F(x)=\sum\limits_{\kappa\in\mathbb Z^n}f(x+\kappa)\,,\quad\mbox{for a.e.}\,\,x\in\mathbb R^n
\end{equation} 
is in $L^1(Q)$ and  satisfies:
\begin{equation}\label{per_eqt:16}
\vert F(x)\vert\le G(x)\,,\quad\mbox{for a.e.}\,\,x\in\mathbb R^n\,.
\end{equation}
Formula \eqref{per_eqt:3.1} follows at once after the Dominated Convergence Theorem, similarly to \eqref{per_eqt:12}.
\newline
$F$ and $G$ are $1-$periodic functions in $\mathbb R^n$; thus, in view of Lemma \ref{per_lemma:1}, $F(x)(1+\vert x\vert)^{-N}$ and $G(x)(1+\vert x\vert)^{-N}$ belong to $L^1(\mathbb R^n)$ for all real $N>n$. We already have proven that 
\begin{equation}\label{per_eqt:17}
\lim\limits_{h\to +\infty}\vert F_h(x)-F(x)\vert(1+\vert x\vert)^{-N}=0\,,\quad\mbox{for a.e.}\,\,x\in\mathbb R^n\,;
\end{equation}
moreover \eqref{per_eqt:14}, \eqref{per_eqt:16} yield
\begin{equation}\label{per_eqt:18}
\vert F_h(x)-F(x)\vert(1+\vert x\vert)^{-N}\le 2G(x)(1+\vert x\vert)^{-N}\quad\mbox{for a.e.}\,\,x\in\mathbb R^n\,.
\end{equation}
Therefore applying once again the Dominated Convergence Theorem we obtain 
\begin{equation}\label{per_eqt:19}
\lim\limits_{h\to +\infty}\int\vert F_h(x)-F(x)\vert(1+\vert x\vert)^{-N}dx=0\,,
\end{equation} 
that is \eqref{per_eqt:4}.
\newline
Eventually, the convergence of $\{F_h\}_{h=0}^{+\infty}$ to $F$ in $\mathcal S^\prime(\mathbb R^n)$ easily follows from observing that for any real number $N>n$  and $\varphi\in\mathcal S(\mathbb R^n)$ one has
\begin{equation}\label{per_eqt:20}
\lim\limits_{h\to +\infty}\langle F_h-F,\varphi\rangle=\lim\limits_{h\to +\infty}\int (F_h(x)-F(x))(1+\vert x\vert)^{-N}(1+\vert x\vert)^{N}\varphi(x)dx=0\,,
\end{equation}
because of \eqref{per_eqt:19} and 
\begin{equation}\label{per_eqt:21}
\begin{split}
\left\vert\int (F_h(x)-F(x))(1+\vert x\vert)^{-N}(1+\vert x\vert)^{N}\varphi(x)dx\right\vert\\
\le\Vert(1+\vert\cdot\vert)^N\varphi\Vert_{L^\infty}\Vert(F_h-F)(1+\vert\cdot\vert)^{-N}\Vert_{L^1}\,.
\end{split}
\end{equation}
\end{proof}

\smallskip
\noindent
We are now in the position to state the following weak form of the Poisson summation formula.
\begin{proposition}\label{per_prop:2}
Let $f\in L^1(\mathbb R^n)$ and $L\in GL(n)$ be given. Then
\begin{equation}\label{per_eqt:22}
\sum\limits_{\kappa\in\mathbb Z^n}f(x+L\kappa)=\frac1{\vert det L\vert}\sum\limits_{\kappa\in\mathbb Z^n}\hat f(L^{-T}\kappa)e^{2\pi i L^{-T}\kappa\cdot x}\,,
\end{equation}
where $L^{-T}:=(L^{-1})^T$, the left-hand series is absolutely convergent  for a.e. $x\in\mathbb R^n$ and convergent in $\mathcal S^\prime(\mathbb R^n)$. The right-hand series is convergent in $\mathcal S^\prime(\mathbb R^n)$ and the equality between the series holds true in $\mathcal S^{\prime}(\mathbb R^n)$ 
\end{proposition}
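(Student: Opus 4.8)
The plan is to identify the left-hand side of \eqref{per_eqt:22} with the $L$-periodization $F_L$ of $f$ and to read off the right-hand side as its Fourier series. First I would invoke Proposition \ref{per_prop:1}: since $f\in L^1(\mathbb R^n)$, the series $F_L(x)=\sum_{\kappa}f(x+L\kappa)$ converges absolutely for a.e. $x$, defines an $L$-periodic function with $F_L\in L^1(L[0,1)^n)$, and converges to $F_L$ in $\mathcal S'(\mathbb R^n)$. This already settles the claimed convergence of the left-hand series, so it only remains to compute the Fourier expansion of $F_L$ and match it with the right-hand side.

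Since $F_L\in L^1(\mathbb T^n_L)$, by \eqref{FS1}--\eqref{FS2} it admits the expansion $F_L(x)=\sum_\kappa c_\kappa(F_L)\,e^{-2\pi i L^{-T}\kappa\cdot x}$, convergent in $\mathcal S'(\mathbb R^n)$, with coefficients
\[
c_\kappa(F_L)=\frac{1}{|\det L|}\int_{L[0,1]^n}e^{2\pi i L^{-T}\kappa\cdot x}F_L(x)\,dx.
\]
The core of the argument is the evaluation of these coefficients. Substituting the definition of $F_L$, interchanging sum and integral, and changing variables $y=x+L\mu$ in each term, the decisive algebraic fact is that $\langle L^{-T}\kappa,L\mu\rangle=\langle\kappa,\mu\rangle\in\mathbb Z$, so that the phase factor $e^{-2\pi i L^{-T}\kappa\cdot L\mu}$ equals $1$; meanwhile the translated cells $L([0,1)^n+\mu)$, as $\mu$ ranges over $\mathbb Z^n$, tile $\mathbb R^n$ up to a null set. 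Hence the summed integrals reassemble $\int_{\mathbb R^n}e^{2\pi i L^{-T}\kappa\cdot y}f(y)\,dy=\hat f(-L^{-T}\kappa)$, giving $c_\kappa(F_L)=\frac{1}{|\det L|}\hat f(-L^{-T}\kappa)$.

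Plugging this back into the Fourier expansion and reindexing $\kappa\mapsto-\kappa$ converts $\sum_\kappa\hat f(-L^{-T}\kappa)e^{-2\pi i L^{-T}\kappa\cdot x}$ into $\sum_\kappa\hat f(L^{-T}\kappa)e^{2\pi i L^{-T}\kappa\cdot x}$, which is exactly the right-hand side of \eqref{per_eqt:22}; its convergence in $\mathcal S'$ and the equality of the two series in $\mathcal S'$ are inherited from the $\mathcal S'$-convergence of the Fourier series \eqref{FS1}. The step I expect to be the main obstacle to make rigorous is the interchange of summation and integration in the coefficient computation; this is justified by Tonelli's theorem, since $\sum_\mu\int_{L[0,1]^n}|f(x+L\mu)|\,dx=\int_{\mathbb R^n}|f(x)|\,dx<\infty$, which is precisely the finiteness established in the proof of Proposition \ref{per_prop:1} (compare \eqref{per_eqt:12}). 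Everything else reduces to bookkeeping of the change of variables and the tiling property of the fundamental domain.
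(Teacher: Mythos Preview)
Your proposal is correct and follows essentially the same route as the paper: invoke Proposition \ref{per_prop:1} for the convergence statements, then identify the Fourier coefficients of $F_L$ with $\hat f$ sampled on the dual lattice via the tiling argument (the paper phrases this last step as applying \eqref{per_eqt:3.1} to $e^{-2\pi i\kappa\cdot x}f(x)$ rather than an explicit Tonelli interchange, and it first reduces to $L=I_n$, but these are cosmetic differences).
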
 
\begin{proof}
We may still reduce to $L=I_n$. Let $F$ be the $1-$periodic function defined by the series in the left-hand side of \eqref{per_eqt:22}. The absolute convergence a.e. in $\mathbb R^n$ and the convergence in $\mathcal S^\prime(\mathbb R^n)$ directly follow from Proposition \ref{per_prop:1}.
\newline
Arguing as in H\"ormander \cite[Section 7.2]{HOR0} (see also \cite{GMArx}), the Fourier transform of $F$ , regarded as a periodic distribution in $\mathcal S^\prime(\mathbb R^n)$, satisfies
\begin{equation}\label{per_eqt:23}
\hat F=\sum\limits_{\kappa\in\mathbb Z^n}c_{\kappa}(F)\delta_{\kappa}\,,
\end{equation}
where
\begin{equation}\label{per_eqt:24}
c_\kappa(F)=\int_{[0,1]^n} e^{-2\pi i\kappa\cdot x}F(x)dx\,,\quad\kappa\in\mathbb Z^n\,,
\end{equation}
are the Fourier coefficients of $F$ and the series in the right-hand side is convergent to $\hat F$ is $\mathcal S^\prime(\mathbb R^n)$. By inverse Fourier transform of \eqref{per_eqt:23} we have then
\begin{equation}\label{per_eqt:25}
F=\sum\limits_{\kappa\in\mathbb Z^n}c_{\kappa}(F)e^{2\pi i\kappa\cdot x}\,,
\end{equation}
where the Fourier series in the right-hand side above is still convergent in $\mathcal S^\prime(\mathbb R^n)$.
\newline
It remains to compute the form of the Fourier coefficients of $F$, which follows at once from inserting the right-hand side of \eqref{per_eqt:15} in \eqref{per_eqt:24} and applying formula \eqref{per_eqt:3.1} to $e^{-2\pi\kappa\cdot x} f(x)$ instead of $f$, namely for any $\kappa\in\mathbb Z^n$
\begin{equation}\label{per_eqt:26}
\begin{split}
c_\kappa(F)&=\int_{[0,1]^n}\sum\limits_{\ell\in\mathbb Z^n}e^{-2\pi i\kappa\cdot x}f(x+\ell)dx=\int_{[0,1]^n}\sum\limits_{\ell\in\mathbb Z^n}e^{-2\pi i \kappa\cdot (x+\ell)}f(x+\ell)dx\\
&=\int_{\mathbb R^n} e^{-2\pi i\kappa\cdot x}f(x)dx=\hat f(\kappa)\,.
\end{split}
\end{equation}
Thus, replacing the latter in \eqref{per_eqt:25} and $F$ by its definition as the sum of the series in \eqref{per_eqt:15} we end the proof.
\end{proof}
For optimal results concerning the Poisson formula we address to \cite{SW71}, \cite{KR94}, \cite{Gro96}, \cite{GroKop19} .


\section{Pseudodifferential operators}\label{GENSYMB}
We say pseudodifferential operator with $0\leq \tau\leq 1$ quantization and symbol $p(z)=p(x,\omega) \in \mathcal S'(\mathbb R^{2d})$,  the operator acting from $\mathcal S(\mathbb R^{d})$ to $\mathcal S'(\mathbb R^{d})$ defined  by
\begin{equation}\label{PS1}
\textup{Op}_{\tau}(p)u(x):=\int_{\mathbb R^d_\omega}\int_{\mathbb R^d_y}e^{2\pi i (x-y)\cdot \omega}p\left((1-\tau) x+\tau y, \omega\right) u(y)\, dy\,d\omega, \quad u\in \mathcal S(\mathbb R^d).
\end{equation}
The formal integration must be understood in distribution sense.
For the definition and development of pseudodifferential operators see the basic texts  \cite{Shu87}, \cite{Hor94}.

\subsection{Periodic symbols}\label{SPDO}
For the proof of the results in this subsection we refer to \cite[\S 3]{GMArx}.
\begin{definition}\label{TFIdef}
A Banach space $\mathcal S(\mathbb R^{d})\hookrightarrow X\hookrightarrow \mathcal S' (\mathbb R^d)$, with $\mathcal S(\mathbb R^d)$ dense in $X$,  is  \textit{time-frequency shifts invariant} (tfs invariant from now on)  if for some polynomial weight function $v$ and $C>0$
\begin{equation}\label{PST8}
\Vert \pi_{z} u\Vert_X\leq C v(z)\Vert u\Vert_X, \quad u\in X,\quad z\in \mathbb R^{2d}.
\end{equation}
\end{definition}
\begin{example}\label{EXINV} Consider  $p, q\in [1, + \infty)$ and  a polynomially moderated weight function $m$.
\begin{itemize}
\item The $m-$weighted modulation spaces $M^{p,q}_m(\mathbb R^d)$, are tfs invariant, see \cite[Theorem 11.3.5] {GRO1}.

\item  The $m-$weighted Lebesgue space $L^p_m(\mathbb R^{d})$ and $m-$weighted Fourier-Lebesgue space $\mathcal FL^p_m(\mathbb R^d)$, defined as the sets of measurable functions and tempered distributions in $\mathbb R^d$, making finite the norms $\Vert f\Vert_{L^p_{m}}:=\Vert m(\cdot,\omega_0)f\Vert_{L^p}$ and $\Vert f\Vert_{\mathcal FL^p_{m}}=\Vert m(x_0,\cdot) \hat f\Vert_{L^p}$, whatever are $(x_0, \omega_0)\in\mathbb R^{2d}$, are tfs  invariant.
\end{itemize}
\smallskip

In both the examples the positive constants $C$ are  directly obtained from \eqref{SOPS1} and depend only on the weights $m$.
\end{example}

\begin{theorem}[Continuity]\label{TEOPST}
Let $X$ be a tfs  invariant space, $L\in GL(2d)$, $p\in \mathcal D'(\mathbb T^{2d}_L)$. Assume that the Fourier coefficients $c_\kappa(p)$ defined in \eqref{FL6} satisfy,
\begin{equation}\label{TEOPST1}
\Vert c_\kappa(p)\Vert_{\ell^1_{L, v}}:= \sum_{\kappa\in \mathbb Z^{2d}} v\left(\mathcal JL^{-T}\kappa\right)\vert c_\kappa(p)\vert <+\infty.
\end{equation}
 Then  for any $\tau\in [0,1]$ the operator $\textup{Op}_\tau(p)$ is bounded on $X$ and
\begin{equation}\label{TEOPST2}  
\Vert \textup{Op}_\tau(p)\Vert_{\mathcal L(X)}\leq C\Vert c_\kappa(p)\Vert_{\ell^1_{L, v}},
\end{equation}
Where $C$ is the constant in \eqref{PST8}.\\
\end{theorem}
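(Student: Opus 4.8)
The plan is to exploit the Fourier expansion of the $L$-periodic symbol to reduce the action of $\textup{Op}_\tau(p)$ to a weighted superposition of time-frequency shifts, and then to control each shift through the tfs-invariance hypothesis \eqref{PST8}. Writing $\mu_\kappa:=L^{-T}\kappa=((\mu_\kappa)_1,(\mu_\kappa)_2)\in\mathbb R^d\times\mathbb R^d$, formula \eqref{FL5} expresses $p$ as the series $\sum_{\kappa}c_\kappa(p)\,e^{2\pi i\langle\mu_\kappa,\cdot\rangle}$, convergent in $\mathcal S'(\mathbb R^{2d})$. The decisive computation is the action of $\textup{Op}_\tau$ on a single Fourier mode $e_\mu(x,\omega):=e^{2\pi i\langle\mu,(x,\omega)\rangle}$. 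Inserting $e_\mu$ into \eqref{PS1} and performing the (formal) integration in $\omega$, which produces a Dirac mass enforcing $y=x+\mu_2$, I expect to obtain
\[
\textup{Op}_\tau(e_\mu)u(x)=e^{2\pi i\tau\,\mu_1\cdot\mu_2}\,e^{2\pi i\mu_1\cdot x}u(x+\mu_2)=e^{2\pi i\tau\,\mu_1\cdot\mu_2}\,\pi_{\mathcal J\mu}u(x),
\]
since $e^{2\pi i\mu_1\cdot x}u(x+\mu_2)=M_{\mu_1}T_{-\mu_2}u(x)=\pi_{(-\mu_2,\mu_1)}u(x)$ and $(-\mu_2,\mu_1)=\mathcal J\mu$ by \eqref{SYMPMAT}. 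Thus each Fourier mode is mapped, up to a unimodular factor, to the time-frequency shift $\pi_{\mathcal J\mu_\kappa}=\pi_{\mathcal J L^{-T}\kappa}$; this is exactly the origin of the weight argument $v(\mathcal J L^{-T}\kappa)$ appearing in \eqref{TEOPST1}.

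Granting this reduction, the estimate follows by the triangle inequality and \eqref{PST8}. For $u\in\mathcal S(\mathbb R^d)$ and the partial sums $p_N:=\sum_{\vert\kappa\vert\le N}c_\kappa(p)e_{\mu_\kappa}$, linearity gives $\textup{Op}_\tau(p_N)u=\sum_{\vert\kappa\vert\le N}c_\kappa(p)\,e^{2\pi i\tau(\mu_\kappa)_1\cdot(\mu_\kappa)_2}\,\pi_{\mathcal J L^{-T}\kappa}u$, whence
\[
\Vert\textup{Op}_\tau(p_N)u\Vert_X\le\sum_{\vert\kappa\vert\le N}\vert c_\kappa(p)\vert\,\Vert\pi_{\mathcal J L^{-T}\kappa}u\Vert_X\le C\Vert u\Vert_X\sum_{\vert\kappa\vert\le N}v(\mathcal J L^{-T}\kappa)\vert c_\kappa(p)\vert.
\]
Since the right-hand side is dominated by $C\Vert u\Vert_X\Vert c_\kappa(p)\Vert_{\ell^1_{L,v}}<\infty$, the series $\sum_\kappa c_\kappa(p)\,e^{2\pi i\tau(\mu_\kappa)_1\cdot(\mu_\kappa)_2}\,\pi_{\mathcal J L^{-T}\kappa}u$ converges absolutely in $X$; call its sum $w\in X$, which already satisfies $\Vert w\Vert_X\le C\Vert c_\kappa(p)\Vert_{\ell^1_{L,v}}\Vert u\Vert_X$.

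It remains to identify $w$ with $\textup{Op}_\tau(p)u$. Here I would use that, for fixed $u\in\mathcal S(\mathbb R^d)$, the map $p\mapsto\textup{Op}_\tau(p)u$ is continuous from $\mathcal S'(\mathbb R^{2d})$ to $\mathcal S'(\mathbb R^d)$, so that $p_N\to p$ in $\mathcal S'$ forces $\textup{Op}_\tau(p_N)u\to\textup{Op}_\tau(p)u$ in $\mathcal S'(\mathbb R^d)$. On the other hand $\textup{Op}_\tau(p_N)u\to w$ in $X$, hence in $\mathcal S'$ by the continuous embedding $X\hookrightarrow\mathcal S'(\mathbb R^d)$; uniqueness of limits in $\mathcal S'$ then yields $w=\textup{Op}_\tau(p)u$ together with the bound on $\mathcal S(\mathbb R^d)$. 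Finally, density of $\mathcal S(\mathbb R^d)$ in $X$ and the uniform operator bound extend $\textup{Op}_\tau(p)$ to a bounded operator on all of $X$ satisfying \eqref{TEOPST2}. I expect the main obstacle to be precisely this last convergence/identification step: one must juggle the two topologies (that of $X$ and that of $\mathcal S'$) in order to interchange the operator with the infinite Fourier series, the absolute convergence in $X$ being supplied by the weighted $\ell^1$ hypothesis and the identification of the limit by the $\mathcal S'$-continuity of $\textup{Op}_\tau(\cdot)u$.
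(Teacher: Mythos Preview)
Your proposal is correct and follows the intended route. The paper does not prove Theorem~\ref{TEOPST} in the text but refers to \cite[\S 3]{GMArx}; nonetheless, the computation you outline---expanding $p$ in its $L$-periodic Fourier series, verifying $\textup{Op}_\tau(e_\mu)u=e^{2\pi i\tau\mu_1\cdot\mu_2}\pi_{\mathcal J\mu}u$, and then invoking \eqref{PST8} term by term---is exactly the mechanism the authors themselves display later in \eqref{Gf_eqt:13}--\eqref{Gf_eqt:16} (there for $\tau=0$ and a specific periodized symbol), so your argument agrees with theirs. Your handling of the limit identification via $\mathcal S'$-continuity of $p\mapsto\textup{Op}_\tau(p)u$ together with $X\hookrightarrow\mathcal S'$ is the standard way to close the argument and is sound.
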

In lattice terms, see Remark \ref{LATTICE}, we can write
\begin{equation}\label{TEOPST3}
\Vert c_\kappa (p)\Vert_{\ell^1_{L,v}}=\sum_{\mu=\in \Lambda^\perp}\vert \hat p(\mu)\vert v(\mathcal J \mu):=\Vert \hat p(\mu)\Vert_{\ell^1_v},
\end{equation}
where $\mu= L^{-T}\kappa$,  $\kappa\in \mathbb Z^{2d}$.
\smallskip

For the study of invertibility condition of pseudodifferential operators we will make use of the well known properties of the von Neumann series in
Banach algebras.  
\begin{theorem}[Invertibility]\label{TEOINV}
Let $X$ be a tfs invariant  space, $L\in GL(2d)$, $p\in \mathcal D' (\mathbb T^{2d}_L)$. Assume that the Fourier coefficients $c_\kappa(p)$, $\kappa\in \mathbb Z^{2d}$, satisfy
\begin{equation}\label{TEOINV1}
c_0(p)\neq 0\quad\text{and}\quad
\sum_{\kappa\in \mathbb Z^{2d}_0}\vert c_\kappa(p)\vert v\left(\pi_{\mathcal J L^{-T}\kappa}\right) <\frac{\vert c_0(p)\vert}{C},
\end{equation} 
where $C$ is the constant  in \eqref{PST8}. Then for any $0\leq \tau\leq 1$ 
\begin{itemize}
\item [i)]
the operator $\textup{Op}_\tau(p)$ is invertible in $\mathcal L(X)$;
\item[ii)]
the  norm in $\mathcal L(X)$ of the inverse operator satisfies the following estimate
\begin{equation}\label{TEOINV2}
\Vert (\textup{Op}_\tau(p))^{-1}\Vert_{\mathcal L(X)}\leq \dfrac{1}{\left(1+C\right)\vert c_0(p)\vert-C\Vert c_k(p)\Vert_{\ell^1_{L,m}}}.
\end{equation} 
\end{itemize}
\end{theorem}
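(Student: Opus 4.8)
The plan is to write $\textup{Op}_\tau(p)$ as a nonzero scalar multiple of the identity plus a contraction, and then invert by the Neumann series in the Banach algebra $\mathcal L(X)$. First I would separate the zeroth Fourier coefficient from the rest: setting $p_0 := p - c_0(p)$, the coefficients of $p_0$ are exactly the $c_\kappa(p)$ with $\kappa\in\mathbb Z^{2d}_0$, while the zeroth coefficient of $p_0$ vanishes; the linearity of the quantization \eqref{PS1} in the symbol then gives $\textup{Op}_\tau(p) = \textup{Op}_\tau(c_0(p)) + \textup{Op}_\tau(p_0)$.

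The key elementary point is that a constant symbol yields a scalar operator. Since $p((1-\tau)x+\tau y,\omega)\equiv c_0(p)$ is independent of the phase-space variable, the $\tau$-dependence in \eqref{PS1} drops out, and Fourier inversion (interpreted, as always, in the sense of tempered distributions) gives
\begin{equation}
\textup{Op}_\tau(c_0(p))u(x) = c_0(p)\int_{\mathbb R^d_\omega}\int_{\mathbb R^d_y} e^{2\pi i(x-y)\cdot\omega}u(y)\,dy\,d\omega = c_0(p)\,u(x),
\end{equation}
so that $\textup{Op}_\tau(c_0(p)) = c_0(p)\,\mathrm{Id}$ for every $\tau\in[0,1]$.

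For the perturbation term, since $p_0\in\mathcal D'(\mathbb T^{2d}_L)$ has vanishing zeroth coefficient, the Continuity Theorem \ref{TEOPST} bounds $\Vert\textup{Op}_\tau(p_0)\Vert_{\mathcal L(X)}$ by $C\sum_{\kappa\in\mathbb Z^{2d}_0}v(\mathcal J L^{-T}\kappa)\vert c_\kappa(p)\vert$. Factoring out the scalar $c_0(p)\ne 0$, I would write $\textup{Op}_\tau(p) = c_0(p)(\mathrm{Id}+R)$ with $R := c_0(p)^{-1}\textup{Op}_\tau(p_0)$; the smallness assumption \eqref{TEOINV1} says precisely that $\Vert R\Vert_{\mathcal L(X)}<1$. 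Hence $\mathrm{Id}+R$ is invertible through the Neumann series $\sum_{n\ge0}(-R)^n$ with $\Vert(\mathrm{Id}+R)^{-1}\Vert\le(1-\Vert R\Vert)^{-1}$, and consequently $\textup{Op}_\tau(p)$ is invertible in $\mathcal L(X)$, which settles (i).

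To obtain (ii) I would combine $(\textup{Op}_\tau(p))^{-1} = c_0(p)^{-1}(\mathrm{Id}+R)^{-1}$ with the estimate on $\Vert R\Vert$ to arrive at
\begin{equation}
\Vert(\textup{Op}_\tau(p))^{-1}\Vert_{\mathcal L(X)} \le \frac{1}{\vert c_0(p)\vert - C\sum_{\kappa\in\mathbb Z^{2d}_0}v(\mathcal J L^{-T}\kappa)\vert c_\kappa(p)\vert},
\end{equation}
and then rewrite the denominator using $v(0)=1$, which makes the tail sum equal to $\Vert c_\kappa(p)\Vert_{\ell^1_{L,v}}-\vert c_0(p)\vert$; substituting this transforms the bound into exactly the stated \eqref{TEOINV2}. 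I expect the only genuine care to lie in the distributional justification that a constant symbol gives $c_0(p)\,\mathrm{Id}$ uniformly in $\tau$; everything else is the routine Neumann-series estimate together with bookkeeping of the constants.
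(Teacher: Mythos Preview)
Your proposal is correct and follows precisely the approach the paper indicates (it refers to \cite{GMArx} for the details and explicitly states that the invertibility rests on ``the well known properties of the von Neumann series in Banach algebras''). Splitting off the constant term $c_0(p)\,\mathrm{Id}$, bounding the remainder via Theorem~\ref{TEOPST}, and then inverting $\mathrm{Id}+R$ by the Neumann series --- together with the bookkeeping $v(0)=1$ to recast the denominator as $(1+C)\vert c_0(p)\vert-C\Vert c_\kappa(p)\Vert_{\ell^1_{L,v}}$ --- is exactly the intended argument.
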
 
Notice that, according to the previous estimate, the invertibility of $\textup{Op}_\tau(p)$ is independent of the quantization $\tau$.
\begin{remark}\label{ultradistr_rmk}
{\rm In order to stay in the classical setup of rapidly decreasing functions and tempered distributions, when dealing with modulation spaces, we reduce our previous study to the case when $v(z)$, $z=(x,\omega)$, is a polynomial weight \eqref{pol}. However, weighted modulation spaces can be defined even for more general types of non polynomial weight functions, that are only sub-multiplicative, namely satisfying
\[
v(z_1+z_2)\le v(z_1)v(z_2)\,,\quad\forall\,z_1,\, z_2\in \mathbb R^{2d}\,.
\]
This allows e.g. weight functions which exhibit an exponential growth at infinity. 
\newline
One way to make such an extension is the one indicated by Gr\"ochenig \cite[Chapter 11.4]{GRO1}: it relies on the usage of a space of special windows in STFT and replacing the space of tempered distributions $\mathcal S^\prime(\mathbb R^d)$ by the (topological) dual of the modulation space $M^1_v$, which is shown to include $\mathcal S^\prime(\mathbb R^d)$, for certain non polynomial weight functions $v$. 
\newline
An alternative approach is the one resorting to the Bj\"orck's theory of ultradistributions \cite{Bjo66}, where the modulation spaces are recovered as subspaces of ultradistributions under suitable Gelfand-Shilov type growth conditions \cite{GelShi68}. Along this second approach, Dimovski et al. \cite{DimPilPraTeo-2019} introduced a notion of translation-modulation shift invariant spaces, generalizing to the framework of ultradistributions the notion of time-frequency shift invariant spaces considered in the present paper, see the definition above. It is likely expected that our main results in Theorem \ref{TEOPST} and Theorem \ref{TEOINV} could be extended to the case of non polynomial weight functions, by working in the more general setting of \lq\lq tempered\rq\rq ultradistristributions introduced in \cite{DimPilPraTeo-2019}, instead of standard tempered distributions in $\mathcal S^\prime(\mathbb R^d)$.} 
\end{remark}


\subsection{Continuity and invertibility}
In the following the $L-$periodization procedure described above will be applied to a general symbol $q(z)\in L^1(\mathbb R^{2d})$, with variables $z=(x,\omega)$, for an arbitrarily given invertible matrix $L\in GL(2d)$. In view of \eqref{per_eqt:1}, the $L-${\em periodized symbol} of $q$ is defined to be the $L-$periodic symbol
\begin{equation}\label{LPER3}
q_{L}(z):=\sum\limits_{\kappa\in\mathbb Z^{2d}}q(z+L\kappa)\,,\quad\mbox{for a.e.}\,\,\,z\in\mathbb R^{2n}\,,
\end{equation} 
with the absolute convergence of the right-hand series almost everywhere in $\mathbb R^{2n}$, and where $q_{L}(z)\in L^1(\mathbb T^{2d}_L)$ satisfies
\begin{equation}\label{LPER4}
\int_{\mathbb R^{2d}}q(z)dz=\int_{L[0.1]^{2d}}q_{L}(z)dz\,,
\end{equation}
in view of Proposition \ref{per_prop:1}.
\newline
All the results of Section \ref{SPDO} apply to $q_{L}(z)$, since it is a $L-$periodic symbol. The relation between the Fourier coefficients $\left\{c_{\kappa}(q_{L})\right\}_{\kappa\in\mathbb Z^{2d}}$ of $q_{L}$ and the Fourier transform of the original symbol $q(z)\in L^1(\mathbb R^{2d})$ can be easily derived as follows. For any $\kappa\in\mathbb Z^{2d}$, we compute
\begin{equation}
\begin{split}
c_{\kappa}(q_{L})&=\frac1{\vert{\rm det}L\vert}\int_{L[0,1]^{2d}}q_{L}(z)e^{-2\pi i\langle L^{-T}\kappa, z\rangle} dz\\
&=\frac1{\vert{\rm det}L\vert}\int_{L[0,1]^{2d}}\sum\limits_{\ell\in\mathbb Z^{2d}}q(z+L\ell)e^{-2\pi i\langle L^{-T}\kappa, z\rangle} dz\\
&=\frac1{\vert{\rm det}L\vert}\int_{L[0,1]^{2d}}\sum\limits_{\ell\in\mathbb Z^{2d}}q(z+L\ell)e^{-2\pi i\langle L^{-T}\kappa, z+L\ell\rangle} dz\,,
\end{split}
\end{equation}  
where in the last equality above we have exploited that $e^{-2\pi i\langle L^{-T}\kappa,L\ell\rangle}=e^{2\pi i\langle\kappa,\ell\rangle}=1$ since $\langle\kappa,\ell\rangle\in\mathbb Z$. We then apply once again Proposition \ref{per_prop:1} to $q(z)e^{-2\pi i\langle L^{-T}\kappa, z\rangle}$ to simplify $c_{\kappa}(q_{L})$ as
\begin{equation}\label{LPER5}
\begin{split}
c_{\kappa}(q_{L})&=\frac1{\vert{\rm det}L\vert}\int_{\mathbb R^{2d}}q(z)e^{-2\pi i\langle L^{-T}\kappa, z\rangle} dz=\frac1{\vert{\rm det}L\vert}\hat q(L^{-T}\kappa)\,,
\end{split}
\end{equation}
where
\begin{equation*}
\hat q(\theta)=\int_{\mathbb R^{2d}}q(z)e^{-2\pi i\langle\theta, z\rangle}dz\,,\quad \mbox{for}\,\,\,\theta\in\mathbb R^{2d}\,, 
\end{equation*}
is the  Fourier transform of $q(z)$.
\begin{remark}\label{REMLPER2}
It is worth noticing that
\begin{equation*}
c_{0}(q_{L})=\frac1{\vert{\rm det}L\vert}\int_{\mathbb R^{2d}}q(z)dz\,.
\end{equation*}
\end{remark}
\medskip
As announced above, the results collected in Section \ref{SPDO} concerning the boundedness and the invertibility of a pseudodifferential operator with $L-$periodic $\tau-$symbol $p(z)$ can be translated into corresponding results on the operator ${\rm Op}_\tau(q_{L})$ related to the $L-$ periodization of a symbol $q(z)\in L^1(\mathbb R^{2d})$; plugging the previously found explicit form of the Fourier coefficients of $q_{L}$ into the sufficient condition of boundedness \eqref{TEOPST1}, provided by Theorem \ref{TEOPST}, easily leads to the following sufficient condition of      
boundedness ${\rm Op}_\tau(q_{L})$, for arbitrary $\tau\in[0,1]$.
\begin{proposition}\label{PROPLPER1}
Let $X$ be a time-frequency shifts invariant space, $L\in GL(2d)$, $q(z)\in L^1(\mathbb R^{2d})$. Assume that the Fourier transform $\hat q$ of $q$ makes satisfied the following condition
\begin{equation}\label{PROPLPER1.1}
\sigma_{L,v}(q):=\sum_{\kappa\in \mathbb Z^{2d}} v\left(\mathcal JL^{-T}\kappa\right)\vert\hat q(L^{-T}\kappa)\vert <+\infty.
\end{equation}
Then  for any $\tau\in [0,1]$ the operator $\textup{Op}_\tau(q_{L})$, being $q_{L}$ the $L-$periodized of $q$ defined in \eqref{LPER3}, is bounded on $X$ and
\begin{equation}\label{PROPLPER1.2}  
\Vert \textup{Op}_\tau(q_{L})\Vert_{\mathcal L(X)}\leq \frac{C}{\vert{\rm det}L\vert}\sigma_{L,v}(q),
\end{equation}
where $C$ is the constant in \eqref{PST8}.
\end{proposition}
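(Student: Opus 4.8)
The plan is to apply the continuity result of Theorem \ref{TEOPST} directly to the $L$-periodic symbol $p := q_{L}$; the proposition then reduces to verifying that the summability hypothesis of that theorem holds and to simplifying the resulting operator-norm bound.

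First I would confirm that $q_{L}$ is a legitimate $L$-periodic symbol, so that the machinery of Section \ref{SPDO} applies to it. By Proposition \ref{per_prop:1} (with $n = 2d$), the defining series \eqref{LPER3} converges absolutely for almost every $z$, one has $q_{L}\in L^1(L[0,1)^{2d})$, and the partial sums converge to $q_{L}$ in $\mathcal S'(\mathbb R^{2d})$. Since $q_{L}$ is $L$-periodic by construction, it determines an element of $\mathcal D'(\mathbb T^{2d}_L)$, exactly as required by the hypotheses of Theorem \ref{TEOPST}.

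Next I would insert the explicit form of its Fourier coefficients. By the computation culminating in \eqref{LPER5}, one has $c_{\kappa}(q_{L}) = \frac{1}{\vert \det L\vert}\hat q(L^{-T}\kappa)$ for every $\kappa\in\mathbb Z^{2d}$, where $\hat q$ is a continuous bounded function because $q\in L^1(\mathbb R^{2d})$, so that the sampled values $\hat q(L^{-T}\kappa)$ are well defined. Substituting this into the weighted $\ell^1$ norm appearing in \eqref{TEOPST1} gives
\begin{equation*}
\Vert c_\kappa(q_{L})\Vert_{\ell^1_{L,v}} = \frac{1}{\vert \det L\vert}\sum_{\kappa\in\mathbb Z^{2d}} v\left(\mathcal J L^{-T}\kappa\right)\,\vert\hat q(L^{-T}\kappa)\vert = \frac{1}{\vert \det L\vert}\,\sigma_{L,v}(q).
\end{equation*}
Thus the assumption $\sigma_{L,v}(q) < +\infty$ of the proposition is exactly equivalent to the summability condition \eqref{TEOPST1} demanded by Theorem \ref{TEOPST}.

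Finally, Theorem \ref{TEOPST} guarantees that $\mathrm{Op}_\tau(q_{L})$ is bounded on $X$ for every $\tau\in[0,1]$, together with the estimate $\Vert \mathrm{Op}_\tau(q_{L})\Vert_{\mathcal L(X)}\le C\,\Vert c_\kappa(q_{L})\Vert_{\ell^1_{L,v}}$; combining this with the identity just obtained yields the claimed bound \eqref{PROPLPER1.2}. I expect no genuine obstacle here: the substantive content has been front-loaded into Proposition \ref{per_prop:1}, which legitimises the periodization, and into the Fourier-coefficient identity \eqref{LPER5}. The only steps deserving a moment's care are checking that the integrability of $q$ makes $\hat q$ defined pointwise at the lattice points $L^{-T}\kappa$, and keeping track of the fact that the weight $v(\mathcal J L^{-T}\kappa)$ must be paired with the correct argument $L^{-T}\kappa$ of $\hat q$.
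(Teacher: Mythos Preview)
Your proposal is correct and follows essentially the same approach as the paper: substitute the Fourier-coefficient identity \eqref{LPER5} into the weighted $\ell^1$ norm of Theorem \ref{TEOPST} to obtain $\Vert c_\kappa(q_{L})\Vert_{\ell^1_{L,v}}=\frac{1}{\vert\det L\vert}\sigma_{L,v}(q)$, then invoke that theorem. Your write-up is in fact more careful than the paper's, which omits the verification that $q_L\in\mathcal D'(\mathbb T^{2d}_L)$ and that $\hat q$ is defined pointwise at the lattice nodes.
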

\begin{proof}
From \eqref{LPER5}, one easily finds for $\Vert c_\kappa(q_{L})\Vert_{\ell^1_{L,v}}$ in \eqref{TEOPST1} the expression 
\[
\Vert c_\kappa(q_{L})\Vert_{\ell^1_{L,v}}=\frac1{\vert{\rm det}L\vert}\sigma_{L,v}(q)\,.
\]
The proof follows at once from the application of Thorem \ref{TEOPST} to $q_{L}$.
\end{proof}
Concerning the invertibility of $\textup{Op}_\tau(q_{L})$ as an operator in $\mathcal L(X)$ we still have just to apply to the $L-$periodized symbol of $q\in L^1(\mathbb R^{2d})$ the analogous result concerning the invertibility of a pseudodifferential operator with $L-$periodic symbol $p(z)$, see \S\, \ref{SPDO}. Then we get the following
\begin{proposition}\label{PROPLPER2}
Let $X$ be a time-frequency shifts invariant space, $L\in GL(2d)$, $q(z)\in L^1(\mathbb R^{2d})$. If $\displaystyle\int_{\mathbb R^{2d}}q(z)dz\neq 0$ and the Fourier transform $\hat q$ satisfies
\begin{equation}\label{PROPLPER2.1}
\sum_{\kappa\in \mathbb Z^{2d}_0} v\left(\mathcal J L^{-T}\kappa\right)\vert\hat q(L^{-T}\kappa)\vert < \frac{1}{C}\left\vert\int_{\mathbb R^{2d}}q(z)dz\right\vert\,,
\end{equation}
  then $\textup{Op}_\tau(q_{L})$ is invertible for all $\tau\in[0,1]$ as an operator in $\mathcal L(X)$. The norm of the inverse operator satisfies the following estimate
\begin{equation}\label{PROPLPER2.1.1}
\Vert (\textup{Op}_\tau(q_L))^{-1}\Vert_{\mathcal L(X)}\leq \dfrac{\vert \textup{det}L\vert}{\left(1+C\right) \sigma_0(q)-C \sigma_{L,v}(q)},
\end{equation}
where $\sigma_0(q)= \vert\int_{\mathbb R^{2d}} q(z)\, dz\vert$, $\sigma_{L,v}(q)$ is defined in \eqref{PROPLPER1.1} and $C$ is the constant in \eqref{PST8}.
\end{proposition}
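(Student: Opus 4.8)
The plan is to obtain this invertibility statement as an immediate application of Theorem \ref{TEOINV} to the $L$-periodized symbol $q_{L}$, in complete analogy with the way Proposition \ref{PROPLPER1} is deduced from Theorem \ref{TEOPST}. All the algebraic ingredients have already been assembled: formula \eqref{LPER5} expresses the Fourier coefficients of $q_{L}$ through the Fourier transform of $q$ as $c_{\kappa}(q_{L})=\frac{1}{\vert\det L\vert}\hat q(L^{-T}\kappa)$, while Remark \ref{REMLPER2} records the distinguished value $c_{0}(q_{L})=\frac{1}{\vert\det L\vert}\int_{\mathbb R^{2d}}q(z)\,dz$. Since $q_{L}\in L^1(\mathbb T^{2d}_{L})$ is an honest $L$-periodic symbol, Theorem \ref{TEOINV} applies to $\textup{Op}_\tau(q_{L})$ verbatim.

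First I would check the two hypotheses of Theorem \ref{TEOINV} for $p=q_{L}$. The requirement $c_{0}(q_{L})\neq 0$ is, by Remark \ref{REMLPER2}, exactly the assumption $\int_{\mathbb R^{2d}}q(z)\,dz\neq 0$. For the smallness condition I substitute \eqref{LPER5} into the sum $\sum_{\kappa\in\mathbb Z^{2d}_0}\vert c_\kappa(q_{L})\vert\, v(\mathcal J L^{-T}\kappa)$ and factor out the common constant $\frac{1}{\vert\det L\vert}$, obtaining $\frac{1}{\vert\det L\vert}\sum_{\kappa\in\mathbb Z^{2d}_0}v(\mathcal J L^{-T}\kappa)\vert\hat q(L^{-T}\kappa)\vert$. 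Comparing with the right-hand side $\frac{\vert c_0(q_L)\vert}{C}=\frac{1}{C\vert\det L\vert}\vert\int_{\mathbb R^{2d}}q(z)\,dz\vert$ and multiplying both sides by $\vert\det L\vert$, the inequality collapses precisely to the hypothesis \eqref{PROPLPER2.1}. Thus both conditions of Theorem \ref{TEOINV} hold, and part i) of that theorem yields the invertibility of $\textup{Op}_\tau(q_{L})$ in $\mathcal L(X)$ for every $\tau\in[0,1]$.

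For the norm estimate I would plug the same two values into the bound \eqref{TEOINV2} of Theorem \ref{TEOINV}. Using $\vert c_0(q_L)\vert=\frac{1}{\vert\det L\vert}\sigma_0(q)$ together with the identity $\Vert c_\kappa(q_L)\Vert_{\ell^1_{L,v}}=\frac{1}{\vert\det L\vert}\sigma_{L,v}(q)$ already established in the proof of Proposition \ref{PROPLPER1}, the denominator $(1+C)\vert c_0(q_L)\vert-C\Vert c_\kappa(q_L)\Vert_{\ell^1_{L,v}}$ equals $\frac{1}{\vert\det L\vert}\bigl((1+C)\sigma_0(q)-C\sigma_{L,v}(q)\bigr)$; inverting and moving the factor $\vert\det L\vert$ to the numerator gives exactly \eqref{PROPLPER2.1.1}, which completes the argument.

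There is no genuine analytical obstacle here, since the substantive work — the von Neumann series argument in a tfs invariant Banach space and its quantization-independence — is entirely contained in Theorem \ref{TEOINV}. The only point demanding care is the algebraic bookkeeping of the factors $\frac{1}{\vert\det L\vert}$: one must check that they cancel in the smallness condition so that it becomes precisely \eqref{PROPLPER2.1}, and that a single surviving factor $\vert\det L\vert$ emerges in the numerator of the inverse-norm estimate \eqref{PROPLPER2.1.1}. I would also note in passing that the weight $v$ in the displayed hypothesis of Theorem \ref{TEOINV} is to be read as $v(\mathcal J L^{-T}\kappa)$, consistent with \eqref{TEOPST1}, so that the matching with \eqref{PROPLPER2.1} is exact.
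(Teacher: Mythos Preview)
Your proposal is correct and follows exactly the approach the paper indicates: the paper does not write out a proof of Proposition \ref{PROPLPER2} but merely says one has to apply to $q_L$ ``the analogous result concerning the invertibility of a pseudodifferential operator with $L$-periodic symbol'', i.e.\ Theorem \ref{TEOINV}, and your computation with \eqref{LPER5} and Remark \ref{REMLPER2} is precisely the bookkeeping that makes this explicit.
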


\section{Application to Gabor  operators}\label{Gf_sec}

We say \textit{(generalized) Gabor system} a sequence $\mathcal G(g,L):=\{\pi_{L\kappa}g\}_{\kappa\in \mathbb Z^{2d}}$, 
where $L\in GL(2d)$ and $g$ is a generic measurable function on $\mathbb R^d$. \\
We can associate to a Gabor system the operator
\begin{equation}\label{GF oprt}
S^L_{g,\gamma}u:=\sum\limits_{\kappa\in\mathbb Z^{2d}}(u,\pi_{-L\kappa}g)_{L^2}\,\pi_{-L\kappa}\gamma\,,
\end{equation}
said \textit {Gabor operator} with windows $g, \gamma$.
Here $\gamma=\gamma(t)$, $g=g(t)$ are regular enough to guarantee the convergence of the series in the right-hand side of \eqref{GF oprt} at least in $\mathcal S^\prime(\mathbb R^d)$, whenever $u\in\mathcal S(\mathbb R^d)$.
By reducing $L$ to a diagonal matrix
\begin{equation}
L=\left(\begin{array}{cc}
\alpha I & 0\\
0 & \beta I
\end{array}\right)\,,
\end{equation}
$\alpha$, $\beta \in \mathbb R_+$ and setting $\gamma=g$, we obtain the Gabor system and operator in classical terms, see e.g.  \cite{GRO1}, \cite{Gro14}, \cite{Hei11}, \cite{BogGar2020} and the reference therein. 

\noindent
We take now $\gamma=\gamma(t)$ and $g=g(t)$ such that 
\begin{equation}\label{Gf_eqt:0}
\gamma\in L^1(\mathbb R^d)\quad\mbox{and}\quad\hat g\in L^1(\mathbb R^d)
\end{equation}
and let $q(x,\omega)$ be the symbol defined by 
\begin{equation}\label{Gf_eqt:1}
q(x,\omega):=e^{-2\pi i x\cdot\omega}(\gamma\otimes\bar{\hat g})(x,\omega)=e^{-2\pi i x\cdot\omega}\gamma(x)\bar{\hat g}(\omega)\,,\quad(x,\omega)\in\mathbb R^{2d}\,,
\end{equation}
so that $q\in L^1(\mathbb R^{2d})$.
\newline
For a given matrix $L\in GL(2d)$, we define the $L-$periodic symbol 
\begin{equation}\label{Gf_eqt:2}
q_L(x,\omega)=\sum\limits_{\kappa\in\mathbb Z^{2d}}q(x+I_1 L\kappa,\omega+I_2 L\kappa)\,,\quad (x,\omega)\in\mathbb R^{2d}\,,
\end{equation}
where $I_1$ and $I_2$ are the $d\times 2d$ matrices defined block-wise as
\begin{equation}\label{Gf_eqt:3}
I_1:=(I\,\,0)\,,\qquad I_2:=(0\,\,I)\,,
\end{equation}
and $I$ and $0$ are respectively the $d\times d$ unit and zero matrices. $q_L(x,\omega)$ is just the $L-$periodization of $q(x,\omega)$, see \eqref{LPER3}.
\newline
Following Boggiatto-Garello \cite{BogGar2020}, we consider the pseudodifferential operator $q_L(\cdot,D)$ with Kohn-Nirenberg quantization (i.e. $\tau=0$) and symbol $q_L(x,\omega)$. $q_L(\cdot,D)$ associates to any rapidly decreasing function $u\in\mathcal S(\mathbb R^d)$ the tempered distribution defined by the formal integral
\begin{equation}\label{Gf_eqt:4}
q_L(x,D)u(x)=\int e^{2\pi i x\cdot\omega}q_L(x,\omega)\hat u(\omega)d\omega\,,
\end{equation}
namely
\begin{equation}\label{Gf_eqt:5}
\begin{split}
\langle q_L(\cdot,D)u,\varphi\rangle:&=\iint e^{2\pi i x\cdot\omega}q_L(x,\omega)\varphi(x)\hat u(\omega)dx d\omega\\
&=\langle q_L(x,\omega),e^{2\pi ix\cdot\omega}(\varphi\otimes\hat u)(x,\omega)\rangle\,,\quad\forall\,\varphi\in\mathcal S(\mathbb R^d)\,.
\end{split}
\end{equation} 
Substituting \eqref{Gf_eqt:2} in \eqref{Gf_eqt:5}, since the series in the right-hand side of \eqref{Gf_eqt:2} is convergent in $\mathcal S^\prime(\mathbb R^{2d})$, we compute
\begin{equation}\label{Gf_eqt:6.1}
\begin{split}
\langle &q_L(\cdot,D)u,\varphi\rangle=\langle q_L(x,\omega),e^{2\pi ix\cdot\omega}(\varphi\otimes\hat u)(x,\omega)\rangle\\
&=\sum\limits_{\kappa\in\mathbb Z^{2d}}\langle q(x+I_1 L\kappa,\omega+I_2 L\kappa),e^{2\pi ix\cdot\omega}(\varphi\otimes\hat u)(x,\omega)\rangle\\
&=\sum\limits_{\kappa\in\mathbb Z^{2d}}\langle e^{-2\pi i (x+I_1 L\kappa)\cdot(\omega+I_2 L\kappa)}(\gamma\otimes\bar{\hat g})(x+I_1 L\kappa,\omega+I_2 L\kappa),e^{2\pi ix\cdot\omega}(\varphi\otimes\hat u)(x,\omega)\rangle\\
&=\sum\limits_{\kappa\in\mathbb Z^{2d}}\langle e^{-2\pi i (x\cdot I_2 L\kappa+I_1 L\kappa\cdot\omega+I_1 L\kappa\cdot I_2 L\kappa)}(\gamma\otimes\bar{\hat g})(x+I_1 L\kappa,\omega+I_2 L\kappa),(\varphi\otimes\hat u)(x,\omega)\rangle\\
&=\!\!\!\sum\limits_{\kappa\in\mathbb Z^{2d}}\langle e^{-2\pi i I_1 L\kappa\cdot I_2 L\kappa}(M_{-I_2 L\kappa}T_{-I_1 L\kappa}\gamma)\otimes (M_{-I_1 L\kappa}T_{-I_2 L\kappa}\bar{\hat g}),(\varphi\otimes\hat u)\rangle\\
&=\sum\limits_{\kappa\in\mathbb Z^{2d}}\langle (T_{-I_1 L\kappa}M_{-I_2 L\kappa}\gamma)\otimes (M_{-I_1 L\kappa}T_{-I_2 L\kappa}\bar{\hat g}),(\varphi\otimes\hat u)\rangle\\
&=\sum\limits_{\kappa\in\mathbb Z^{2d}}\langle T_{-I_1 L\kappa}M_{-I_2 L\kappa}\gamma,\varphi\rangle\,\langle M_{-I_1 L\kappa}T_{-I_2 L\kappa}\bar{\hat g},\hat u\rangle\,,\\
\end{split}
\end{equation} 
obtaining
\begin{equation}\label{Gf_eqt:6}
\langle q_L(\cdot,D)u,\varphi\rangle=\sum\limits_{\kappa\in\mathbb Z^{2d}}\langle T_{-I_1 L\kappa}M_{-I_2 L\kappa}\gamma,\varphi\rangle\,\langle M_{-I_1 L\kappa}T_{-I_2 L\kappa}\bar{\hat g},\hat u\rangle\,,
\end{equation}
where all the involved (numerical) series above are convergent and we made use of the fundamental identity $T_xM_\omega=e^{-2\pi ix\cdot\omega}M_\omega T_x$, see \eqref{SCAMBIO}. Let us focus on the second test $\langle M_{-I_1 L\kappa}T_{-I_2 L\kappa}\bar{\hat g},\hat u\rangle$. Using the identities \eqref{TR1}, \eqref{MO2}, $\bar{\hat f}=\hat{\tilde{\bar f}}$, where $\tilde{f}(t):=f(-t)$ is the symmetric of $f=f(t)$ (extended to distributions by duality, as customary) and Parseval's formula, we get
\begin{equation}\label{Gf_eqt:7}
\begin{split}
\langle & M_{-I_1 L\kappa}T_{-I_2 L\kappa}\bar{\hat g},\hat u\rangle=\langle \overline{M_{I_1 L\kappa}T_{-I_2 L\kappa}{\hat g}},\hat u\rangle=(\hat u,M_{I_1 L\kappa}T_{-I_2 L\kappa}{\hat g})_{L^2}\\
&=(\hat u,\widehat{T_{-I_1 L\kappa}M_{-I_2 L\kappa}g})_{L^2}=(u,T_{-I_1 L\kappa}M_{-I_2 L\kappa}g)_{L^2}\,.
\end{split}
\end{equation}
Substituting \eqref{Gf_eqt:7} into \eqref{Gf_eqt:6} and recovering the notation $\pi_{-L\kappa}:=T_{-I_1 L\kappa}M_{-I_2 L\kappa}$, we have found
\begin{equation}\label{Gf_eqt:8}
\langle q_L(\cdot,D)u,\varphi\rangle=\sum\limits_{\kappa\in\mathbb Z^{2d}}(u,\pi_{-L\kappa}g)_{L^2}\,\langle \pi_{-L\kappa}\gamma,\varphi\rangle\,,\quad \forall\,u\,,\varphi\in\mathcal S(\mathbb R^d)\,,
\end{equation} 
with convergent series in the right-hand side, that is (up to a change of sign of the summation index)
\begin{equation}\label{Gf_eqt:9}
q_L(\cdot,D)u=\sum\limits_{\kappa\in\mathbb Z^{2d}}(u,\pi_{L\kappa}g)_{L^2}\pi_{L\kappa}\gamma\,,\quad \forall\,u\in\mathcal S(\mathbb R^d)\,,
\end{equation} 
with series convergent in $\mathcal S^\prime(\mathbb R^d)$. We can then state the following
\begin{proposition}\label{PROPEQUIV}
Consider $g,\gamma$ measurable functions such that $\gamma\in L^1(\mathbb R^d)$, $\hat g\in L^1(\mathbb R^d)$ and $L\in GL(2d)$. Then 
\begin{equation}\label{PREQ1}
S_{g,\gamma}^L u=q_L(\cdot, D)u\quad \forall u\in \mathcal S(\mathbb R^d),
\end{equation}
where the symbol $q_L(x,\omega)$ is defined in \eqref{Gf_eqt:2}.
\end{proposition}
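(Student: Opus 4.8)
The plan is to prove the operator identity \eqref{PREQ1} weakly: for fixed $u\in\mathcal S(\mathbb R^d)$, both $S_{g,\gamma}^L u$ and $q_L(\cdot,D)u$ are tempered distributions, so it suffices to show that $\langle S_{g,\gamma}^L u,\varphi\rangle=\langle q_L(\cdot,D)u,\varphi\rangle$ for every $\varphi\in\mathcal S(\mathbb R^d)$, whence the equality holds in $\mathcal S'(\mathbb R^d)$. First I would start from the defining weak formula \eqref{Gf_eqt:5} for the Kohn--Nirenberg operator, which expresses $\langle q_L(\cdot,D)u,\varphi\rangle$ as the pairing of the symbol $q_L$ against the test tensor $e^{2\pi i x\cdot\omega}(\varphi\otimes\hat u)$. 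Since the hypotheses $\gamma\in L^1$ and $\hat g\in L^1$ give $q\in L^1(\mathbb R^{2d})$, Proposition \ref{per_prop:1} guarantees that the periodization series \eqref{Gf_eqt:2} defining $q_L$ converges in $\mathcal S'(\mathbb R^{2d})$; this legitimizes interchanging the distributional pairing with the sum over $\kappa\in\mathbb Z^{2d}$, reducing the computation to a single shifted summand for each $\kappa$.

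The heart of the argument is a phase-factor rearrangement. Substituting the explicit product form $q(x,\omega)=e^{-2\pi i x\cdot\omega}\gamma(x)\bar{\hat g}(\omega)$ of \eqref{Gf_eqt:1}, the shifted symbol carries the phase $e^{-2\pi i(x+I_1 L\kappa)\cdot(\omega+I_2 L\kappa)}$, which combines with the test phase $e^{2\pi i x\cdot\omega}$. Expanding the cross terms and invoking the commutation identity $T_x M_\omega=e^{-2\pi i x\cdot\omega}M_\omega T_x$ of \eqref{SCAMBIO}, I would factor the $\kappa$-th summand as a tensor product $(T_{-I_1 L\kappa}M_{-I_2 L\kappa}\gamma)\otimes(M_{-I_1 L\kappa}T_{-I_2 L\kappa}\bar{\hat g})$. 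As the test object $\varphi\otimes\hat u$ is itself a tensor, each term splits into the product of two one-variable pairings: a time-side pairing against $\varphi$ and a frequency-side pairing against $\hat u$.

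The remaining, more delicate, step is to convert the frequency-side pairing $\langle M_{-I_1 L\kappa}T_{-I_2 L\kappa}\bar{\hat g},\hat u\rangle$ into a genuine $L^2$ inner product of $u$ against a time-frequency shift of $g$. Here I would use the Fourier-intertwining relations \eqref{TR1} and \eqref{MO2}, the conjugation identity $\bar{\hat f}=\hat{\tilde{\bar f}}$, and Parseval's formula to transform this pairing into $(u,T_{-I_1 L\kappa}M_{-I_2 L\kappa}g)_{L^2}=(u,\pi_{-L\kappa}g)_{L^2}$, recognizing $\pi_{-L\kappa}=T_{-I_1 L\kappa}M_{-I_2 L\kappa}$. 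Collecting the two factors then yields $\langle q_L(\cdot,D)u,\varphi\rangle=\sum_{\kappa}(u,\pi_{-L\kappa}g)_{L^2}\langle\pi_{-L\kappa}\gamma,\varphi\rangle$, which is exactly $\langle S_{g,\gamma}^L u,\varphi\rangle$, proving \eqref{PREQ1} up to the harmless change of summation index reflected in \eqref{Gf_eqt:9}.

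I expect the main obstacle to be bookkeeping and convergence justification rather than any deep idea: one must carefully track the phase factors through the commutation step, where a sign error is easy, and, crucially, justify both the interchange of the periodization sum with the distributional pairing and the absolute convergence of the resulting numerical series $\sum_\kappa(u,\pi_{-L\kappa}g)_{L^2}\langle\pi_{-L\kappa}\gamma,\varphi\rangle$. This last point is controlled by the $L^1$ decay of $\gamma$ and $\hat g$ together with the rapid decay of $u,\varphi\in\mathcal S(\mathbb R^d)$, which is precisely what keeps every passage between \eqref{Gf_eqt:5} and \eqref{Gf_eqt:9} legitimate.
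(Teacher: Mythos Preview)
Your proposal is correct and follows essentially the same route as the paper: the weak formulation \eqref{Gf_eqt:5}, the interchange of pairing and periodization sum via the $\mathcal S'$-convergence of Proposition \ref{per_prop:1}, the phase rearrangement through \eqref{SCAMBIO} leading to the tensor factorization, and the conversion of the frequency-side pairing via \eqref{TR1}, \eqref{MO2}, conjugation and Parseval are exactly the steps carried out in \eqref{Gf_eqt:6.1}--\eqref{Gf_eqt:9}. Even the identification $\pi_{-L\kappa}=T_{-I_1 L\kappa}M_{-I_2 L\kappa}$ and the final sign change in the summation index match.
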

Considering the Fourier transform of $q(x,\omega)$ in \eqref{Gf_eqt:1}, from Fubini's Theorem and Fourier Inversion formula we easily get
\begin{equation}\label{Gf_eqt:9.1}
\hat{q}(\eta,z)=V_g\gamma(\mathcal J(\eta,z))\,\quad\forall\,(\eta,z)\in\mathbb R^{2d}\,,
\end{equation}
where $\mathcal J$ is the matrix defined in \eqref{SYMPMAT}.
\newline
Indeed, for arbitrary $\eta$, $z$ in $\mathbb R^d$ we compute
\begin{equation*}
\begin{split}
\hat{q}(\eta,z)&=\iint e^{-2\pi i(\eta\cdot x+ z\cdot\omega)}q(x,\omega)dx d\omega=\iint e^{-2\pi i(\eta\cdot x+ z\cdot\omega)}e^{-2\pi i x\cdot\omega}\gamma(x)\bar{\hat g}(\omega)dx d\omega\\
&=\int e^{-2\pi i\eta\cdot x}\gamma(x)\left(\int e^{-2\pi i (x+z)\cdot\omega}\bar{\hat g}(\omega)d\omega\right)dx\\
&=\int e^{-2\pi i\eta\cdot x}\gamma(x)\left(\overline{\int e^{2\pi i (x+z)\cdot\omega}\hat g(\omega)d\omega}\right)dx\\
&=\int e^{-2\pi i\eta\cdot x}\gamma(x)\overline{g(x+z)}dx=\int\gamma(x)\overline{M_{\eta}T_{-z}g(x)}dx\\
&=(\gamma, M_\eta T_{-z}g)_{L^2}=V_g\gamma(-z,\eta)=V_g\gamma(\mathcal J(\eta,z))\,\quad\forall\,(\eta,z)\in\mathbb R^{2d}\,.
\end{split}
\end{equation*}
Notice in particular that setting $z=\eta=0$ in \eqref{Gf_eqt:9.1} we get
\begin{equation}\label{Gf_eqt:9.2}
\hat q(0,0)=(\gamma,g)_{L^2}\,.
\end{equation} 
Now we apply Poisson's formula \eqref{per_eqt:22} to $q(x, \omega)$ to get
\begin{equation}\label{Gf_eqt:10}
q_L(x,\omega)=\frac1{\vert det L\vert}\sum\limits_{\kappa\in\mathbb Z^{2d}}\hat q(L^{-T}\kappa)e^{2\pi i L^{-T}\kappa\cdot (x,\omega)}\,,
\end{equation}   
with convergence in $\mathcal S^\prime(\mathbb R^{2d})$ of the series in the right-hand side, and use
\begin{equation}\label{Gf_eqt:11}
\hat q(L^{-T}\kappa)=V_g\gamma(\mathcal J L^{-T}\kappa)\,,\quad\forall\,\kappa\in\mathbb Z^{2d}\,.
\end{equation}
In order to make the subsequent computations, let $B_1$ and $B_2$ be real matrices of size $2d\times d$ such that
\begin{equation}\label{Gf_eqt:12.0}
L^{-1}=B=(B_1\,\,B_2)\,,
\end{equation}
so that for $\kappa\in\mathbb Z^{2d}$ and $(x,\omega)\in\mathbb R^{2d}$
\begin{equation}\label{Gf_eqt:12}
L^{-T}=\left(\begin{array}{c}B_1^T\\ B_2^T\end{array}\right)\quad\mbox{and}\quad L^{-T}\kappa\cdot(x,\omega)=B_1^T\kappa\cdot x+B_2^T\kappa\cdot\omega\,.
\end{equation}
For arbitrary $u,\varphi\in\mathcal S(\mathbb R^d)$ we compute
\begin{equation}\label{Gf_eqt:13}
\begin{split}
\langle &q_L(\cdot,D)u,\varphi\rangle=\langle q_L(x,\omega),e^{2\pi ix\cdot\omega}(\varphi\otimes\hat u)(x,\omega)\rangle\\
&=\frac1{\vert det L\vert}\sum\limits_{\kappa\in\mathbb Z^{2d}}V_g\gamma(\mathcal J L^{-T}\kappa)\iint e^{2\pi i L^{-Y}\kappa\cdot(x,\omega)}e^{2\pi i x\cdot\omega}\varphi(x)\hat u(\omega)dx d\omega\\
&=\frac1{\vert det L\vert}\sum\limits_{\kappa\in\mathbb Z^{2d}}V_g\gamma(\mathcal J L^{-T}\kappa)\iint e^{2\pi i(B_1^T\kappa\cdot x+B_2^T\kappa\cdot\omega)}e^{2\pi x\cdot\omega}\varphi(x)\hat u(\omega)dx d\omega\,,
\end{split}
\end{equation}
where all (numerical) series occurring above are convergent, because of the convergence in $\mathcal S^\prime(\mathbb R^{2d})$ of the distribution series in the right-hand side of \eqref{Gf_eqt:10}. By Fubini's Theorem and Fourier's Inversion formula, the double integrals under the summation above can be recast as
\begin{equation}\label{Gf_eqt:14}
\begin{split}
&\iint e^{2\pi i(B_1^T\kappa\cdot x+B_2^T\kappa\cdot\omega)}e^{2\pi x\cdot\omega}\varphi(x)\hat u(\omega)dx d\omega\\
&=\int e^{2\pi iB_1^T\kappa\cdot x}\left(\int e^{2\pi i(B_2^T\kappa +x)\cdot\omega}\widehat u(\omega)d\omega\right)\varphi(x)dx\\
&=\int e^{2\pi iB_1^T\kappa\cdot x} T_{-B_2^T\kappa}u(x)\varphi(x)dx=\int M_{B_1^T\kappa}T_{-B^T_2\kappa}u(x)\varphi(x)dx\\
&=\langle M_{B_1^T\kappa}T_{-B^T_2\kappa}u,\varphi\rangle=\langle\pi_{\mathcal J L^{-T}\kappa}u,\varphi\rangle\,.
\end{split}
\end{equation}
where $\mathcal J L^{-T}\kappa=(-B_2^T\kappa,B_1^T\kappa)$, in view of \eqref{Gf_eqt:12}. By Substituting \eqref{Gf_eqt:14} into \eqref{Gf_eqt:13} we end up with 
\begin{equation}\label{Gf_eqt:15}
\langle q_L(\cdot,D)u,\varphi\rangle=\frac1{\vert det L\vert}\sum\limits_{\kappa\in\mathbb Z^{2d}}V_g\gamma(\mathcal J L^{-T}\kappa)\langle\pi_{\mathcal J L^{-T}\kappa}u,\varphi\rangle\,,
\end{equation}
that is for every $u\in\mathcal S(\mathbb R^n)$ we have
\begin{equation}\label{Gf_eqt:16}
q_L(\cdot,D)u=\frac1{\vert det L\vert}\sum\limits_{\kappa\in\mathbb Z^{2d}}V_g\gamma(\mathcal J L^{-T}\kappa)\pi_{\mathcal J L^{-T}\kappa}u\,,
\end{equation}
with convergence in $\mathcal S^\prime(\mathbb R^d)$ of the series in the right-hand side.
\newline
Formula \eqref{Gf_eqt:16} is just Janssen's representation of the Gabor frame operator $S^L_{g,\gamma}=q_L(\cdot,D)$, see \cite{GRO1}. Let us notice that both representations \eqref{Gf_eqt:9} and \eqref{Gf_eqt:16} of the Gabor frame operator $S^L_{g,\gamma}$ hold true under the regularity assumption \eqref{Gf_eqt:0} alone on the windows $\gamma, g$. 



\subsection{Weighted Wiener spaces}\label{WSS}
Recall that $Q:=[0,1]^n$ is the {\em unit cube} of $\mathbb R^n$. For $r\in\mathbb Z^n$,  let $\chi_r=\chi_r(x)$ denotes the characteristic function of $Q_r:=Q+r$, that is
\[
\chi_r(x)=\begin{cases}
1\,,\quad\mbox{if}\,\,x\in Q_r\,,\\
0\,,\quad\mbox{otherwise}\,.
\end{cases}
\]
Let $v=v(x)$ be a polynomial weight function as in \eqref{pol}; the following inequality 
\begin{equation}\label{WS-eq:1}
v(x)\le M_v v(r)\,,\quad\forall\,r\in\mathbb Z^n\,,\,\,\forall\,x\in Q_r\,,
\end{equation}   
holds true, where $M_v:=\max\limits_{z\in Q}v(z)$; indeed it is enough to observe that any $x\in Q_r$ can be decomposed as $x=z+r$, for some $z\in Q$, hence sub-multiplicativity yields
\[
v(x)=v(z+r)\le v(z)v(r)\,;
\]
and \eqref{WS-eq:1} follows at once from the definition of $M_v$.
\begin{definition}\label{WS} 
The weighted Wiener space $W(L^1_v)$ is defined to be the class of all functions $f\in L^\infty(\mathbb R^n)$ such that
\begin{equation}\label{WS-norm}
\Vert f\Vert_{W(L^1_v)}:=\sum\limits_{r\in\mathbb Z^n}v(r)\Vert\chi_r f\Vert_{L^\infty}
\end{equation}
is finite.
\end{definition}
It is well-known that $W(L^1_v)$ is a Banach space with respect to the norm \eqref{WS-norm}, see e.g. \cite{GRO1}. Let $W_0(L^1_v)$ denote the (closed) subspace of $W(L^1_v)$ consisting of all its continuous elements.
\begin{remark}\label{WSrmk:1}
{\rm In view of \eqref{WS-eq:1}, it is clear that a norm equivalent to \eqref{WS-norm} could be  obtained by replacing $v(r)$ with the value of $v$ at any point of the cube $Q_r$ different of $r$, for any $r\in\mathbb Z^n$.} 
\end{remark}

\medskip
The following result deals with the sampling of a function $f\in W_0(L^1_v)$ on a discrete lattice $\Lambda:=L\mathbb Z^n$, for a real matrix $L\in GL(n)$.
\begin{proposition}\label{WS-prop:1}
For every real matrix $L\in GL(n)$ a positive constant $C_{L,v}$, depending only on $L$ and the weight function $v$, exists such that for all $f\in W_0(L^1_v)$:
\begin{equation}\label{prop:1-eq:1}
\sum\limits_{\kappa\in\mathbb Z^n}v(L\kappa)\vert f(L\kappa)\vert\le C_{L,v}\Vert f\Vert_{W(L^1_v)}\,.
\end{equation}
Precisely
\begin{equation}\label{prop:1-est:3}
C_{L,v}:=M_v\prod_{j=1}^n\left(\left[\sum\limits_ {i=1}^{n}\left\vert\frac{a^{i,j}}{det L}\right\vert\right]+1\right)\,,
\end{equation}
where $(a^{ij})$ is the cofactor matrix of $L$ and $M_v$ is the constant in \eqref{WS-eq:1}.
\end{proposition}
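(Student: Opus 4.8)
The plan is to control the sampled sum cube by cube and reduce everything to a lattice-point count. For each $\kappa\in\mathbb Z^n$ the point $L\kappa$ lies in exactly one half-open cube $[0,1)^n+r(\kappa)\subseteq Q_{r(\kappa)}$, with $r(\kappa)\in\mathbb Z^n$ uniquely determined (the integer part of $L\kappa$). Continuity of $f\in W_0(L^1_v)$ makes the value $f(L\kappa)$ meaningful and, since $L\kappa\in Q_{r(\kappa)}$, gives $|f(L\kappa)|\le\|\chi_{r(\kappa)}f\|_{L^\infty}$; moreover \eqref{WS-eq:1} yields $v(L\kappa)\le M_v\,v(r(\kappa))$. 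Hence each term is dominated by $M_v\,v(r(\kappa))\,\|\chi_{r(\kappa)}f\|_{L^\infty}$. Grouping the (nonnegative) terms according to the value $r=r(\kappa)$ and setting $N_r:=\#\{\kappa\in\mathbb Z^n:\ L\kappa\in [0,1)^n+r\}$, I obtain
\[
\sum_{\kappa\in\mathbb Z^n}v(L\kappa)|f(L\kappa)|\le M_v\sum_{r\in\mathbb Z^n}N_r\,v(r)\,\|\chi_r f\|_{L^\infty}\le M_v\Big(\sup_{r\in\mathbb Z^n}N_r\Big)\|f\|_{W(L^1_v)}.
\]
This reduces the statement to a uniform bound on the number $N_r$ of points of the lattice $L\mathbb Z^n$ falling into a unit cube.

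The main point is therefore to estimate $\sup_r N_r$ independently of $r$. Since $L\kappa\in[0,1)^n+r$ is equivalent to $\kappa\in L^{-1}r+L^{-1}[0,1)^n$, counting such $\kappa$ amounts to counting integer points in a translate of the parallelepiped $L^{-1}[0,1)^n$. I would enclose this parallelepiped in an axis-aligned box: the $i$-th coordinate of $L^{-1}y$, for $y\in[0,1)^n$, ranges over an interval of length $w_i:=\sum_{k=1}^n|(L^{-1})_{ik}|$, so $L^{-1}[0,1)^n$, and hence every translate of it, is contained in a box of edge-lengths $w_1,\dots,w_n$ parallel to the axes. An interval of length $w$ contains at most $\lfloor w\rfloor+1$ integers (consecutive integer points differ by $1$), whence any translate of the box contains at most $\prod_{i=1}^n(\lfloor w_i\rfloor+1)$ integer points. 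This gives the translation-independent bound $\sup_r N_r\le\prod_{i=1}^n(\lfloor w_i\rfloor+1)$.

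It remains to rewrite the edge-lengths $w_i$ through the cofactors of $L$. By Cramer's rule $L^{-1}=\tfrac1{\det L}\,\mathrm{adj}(L)$ with $\mathrm{adj}(L)_{ik}=a^{ki}$, so $(L^{-1})_{ik}=a^{ki}/\det L$ and $w_i=\sum_{k=1}^n|a^{ki}/\det L|$. Relabelling the product index $i$ as $j$ and the summation index $k$ as $i$, the bound becomes $\sup_r N_r\le\prod_{j=1}^n\big(\lfloor\sum_{i=1}^n|a^{ij}/\det L|\rfloor+1\big)$, which is exactly $C_{L,v}/M_v$ for the constant $C_{L,v}$ in \eqref{prop:1-est:3} (here $\lfloor\cdot\rfloor$ is the integer part $[\,\cdot\,]$). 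Combining with the first estimate closes the proof. The two delicate steps are precisely the uniform, translation-independent counting bound — handled by the axis-aligned enclosure, which trades the exact but translation-sensitive count for a clean upper bound — and the use of continuity to guarantee that the pointwise samples $f(L\kappa)$ are well defined and dominated by the local supremum $\|\chi_{r(\kappa)}f\|_{L^\infty}$.
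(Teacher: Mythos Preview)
Your proof is correct and follows essentially the same route as the paper: the paper also groups the lattice points $L\kappa$ by the cube $Q_r$ they fall into, uses $v(L\kappa)\vert f(L\kappa)\vert\le M_v\,v(r)\Vert\chi_r f\Vert_{L^\infty}$, and bounds the number of lattice points per cube by mapping $Q_r$ to the parallelepiped $L^{-1}Q_r$, enclosing it in an axis-aligned box, and counting integers coordinate-wise via the elementary fact that an interval of length $w$ contains at most $[w]+1$ integers. The only cosmetic difference is that you use half-open cubes $[0,1)^n+r$ for a unique assignment, whereas the paper uses the closed cubes $Q_r$ (hence an inequality rather than equality in the regrouping step).
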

The proof of Proposition \ref{WS-prop:1} is detailed in  Appendix \ref{APP}.
\smallskip

In the remaining part of this section $X$ is a  time-frequency invariant Banach space as defined in Definition \ref{TFIdef}.
\subsection{Boundedness of  Gabor  operators}\label{bGf_sct}
 As an application of Proposition \ref{PROPLPER1} to the symbol $q_L(x,\omega)$ defined as in \eqref{Gf_eqt:1} we get the following
\begin{proposition}\label{bGf_prop:1}
Let $\gamma=\gamma(t)$ and $g=g(t)$ measurable functions on $\mathbb R^d$ satisfying the assumption \eqref{Gf_eqt:0} and let $L\in GL(2d)$ be such that
\begin{equation}\label{bGf_eqt:1}
\sigma_{L,g,\gamma}:=\sum_{\kappa\in \mathbb Z^{2d}} v\left(\mathcal J L^{-T}\kappa\right)\vert V_g\gamma(\mathcal J L^{-T}\kappa)\vert <+\infty\,.
\end{equation}
Then the Gabor operator $S^L_{g,\gamma}$ estends to a linear bounded operator in $\mathcal L(X)$ and the operator norm of $S^L_{g,\gamma}$ in $\mathcal L(X)$ enjoys the estimate
\begin{equation}\label{bGf_eqt:2}  
\Vert S^L_{g,\gamma}\Vert_{\mathcal L(X)}\leq \frac{C}{\vert{\rm det}L\vert}\sigma_{L,g,\gamma}\,,
\end{equation}
being $C>0$ the constant in \eqref{PST8}.
\end{proposition}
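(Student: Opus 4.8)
The plan is to recognize this statement as a direct specialization of Proposition \ref{PROPLPER1} to the concrete symbol $q(x,\omega)=e^{-2\pi ix\cdot\omega}\gamma(x)\bar{\hat g}(\omega)$ introduced in \eqref{Gf_eqt:1}, combined with the operator identity of Proposition \ref{PROPEQUIV}. First I would record that the regularity hypotheses \eqref{Gf_eqt:0}, that is $\gamma\in L^1(\mathbb R^d)$ and $\hat g\in L^1(\mathbb R^d)$, force $q\in L^1(\mathbb R^{2d})$; hence $q$ is an admissible symbol for the periodization procedure of Section \ref{GENSYMB}, its $L$-periodization $q_L$ in \eqref{Gf_eqt:2} is well defined, and Proposition \ref{PROPLPER1} is applicable to it.

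The key step is the Fourier-side identification already established in \eqref{Gf_eqt:11}, namely $\hat q(L^{-T}\kappa)=V_g\gamma(\mathcal J L^{-T}\kappa)$ for every $\kappa\in\mathbb Z^{2d}$. Inserting this into the quantity $\sigma_{L,v}(q)$ of \eqref{PROPLPER1.1} yields $\sigma_{L,v}(q)=\sigma_{L,g,\gamma}$, the series defined in \eqref{bGf_eqt:1}. Thus the summability assumption \eqref{bGf_eqt:1} is precisely the hypothesis \eqref{PROPLPER1.1} of Proposition \ref{PROPLPER1} rewritten for this choice of $q$, and no separate convergence analysis is needed.

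With both hypotheses of Proposition \ref{PROPLPER1} in place, I would apply it with Kohn--Nirenberg quantization $\tau=0$ to conclude that $\textup{Op}_0(q_L)=q_L(\cdot,D)$ is bounded on $X$ with $\Vert q_L(\cdot,D)\Vert_{\mathcal L(X)}\le \frac{C}{\vert\det L\vert}\sigma_{L,v}(q)=\frac{C}{\vert\det L\vert}\sigma_{L,g,\gamma}$, which is exactly the bound \eqref{bGf_eqt:2}. To transfer this from the pseudodifferential operator to the Gabor operator I invoke Proposition \ref{PROPEQUIV}: $S^L_{g,\gamma}u=q_L(\cdot,D)u$ for every $u\in\mathcal S(\mathbb R^d)$.

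The only point requiring care is the extension claim. Since $X$ is tfs invariant, $\mathcal S(\mathbb R^d)$ is dense in $X$ by Definition \ref{TFIdef}, and $q_L(\cdot,D)$ is already a bounded operator on all of $X$; therefore the a priori formal operator $S^L_{g,\gamma}$, which agrees with $q_L(\cdot,D)$ on the dense subspace $\mathcal S(\mathbb R^d)$, extends uniquely by continuity to a bounded operator on $X$ coinciding with $q_L(\cdot,D)$ and inheriting its norm estimate. This density-and-extension argument is the main (and essentially the only nontrivial) obstacle, the remaining steps being bookkeeping built on the machinery already developed.
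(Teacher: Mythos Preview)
Your proposal is correct and follows precisely the approach the paper intends: the paper introduces this proposition with the phrase ``As an application of Proposition \ref{PROPLPER1} to the symbol $q_L(x,\omega)$ defined as in \eqref{Gf_eqt:1} we get the following'' and gives no further proof, so your reconstruction---identifying $\sigma_{L,v}(q)=\sigma_{L,g,\gamma}$ via \eqref{Gf_eqt:11}, applying Proposition \ref{PROPLPER1} at $\tau=0$, and invoking Proposition \ref{PROPEQUIV} plus density of $\mathcal S(\mathbb R^d)$ in $X$---is exactly what is implicit in the text.
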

As a consequence of Proposition \ref{bGf_prop:1} we have the following result
\begin{corollary}\label{bGf_cor:1}
Assume that $\gamma, g\in M^1_v$. Then for every $L\in GL(2d)$ the Gabor  operator $S^L_{g,\gamma}$ extends to a linear bounded operator in $\mathcal L(X)$; as such, the operator norm of $S^L_{g,\gamma}$ satisfies the following estimate
\begin{equation}\label{bGf_eqt:3}
\Vert S^L_{g,\gamma}\Vert_{\mathcal L(X)}\leq C\Vert g\Vert_{M^1_v}\Vert\gamma\Vert_{M^1_v}\,,
\end{equation} 
with $C>0$ depending only on the weight $v$.  
\end{corollary}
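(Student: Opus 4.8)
The plan is to verify the summability hypothesis \eqref{bGf_eqt:1} of Proposition \ref{bGf_prop:1} and then read the norm bound off \eqref{bGf_eqt:2}. First I would recognize the series $\sigma_{L,g,\gamma}$ as a lattice sampling of the function $V_g\gamma$. Setting $M:=\mathcal J L^{-T}\in GL(2d)$ (with $\mathcal J$ as in \eqref{SYMPMAT}, so $M$ is invertible), the quantity in \eqref{bGf_eqt:1} is precisely
\[
\sigma_{L,g,\gamma}=\sum_{\kappa\in\mathbb Z^{2d}}v(M\kappa)\,\vert V_g\gamma(M\kappa)\vert,
\]
which is exactly the left-hand side of \eqref{prop:1-eq:1} with $n=2d$, the matrix $L$ there replaced by $M$, and $f=V_g\gamma$. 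Since $g,\gamma\in M^1_v\subset L^2(\mathbb R^d)$, the STFT $V_g\gamma$ is uniformly continuous on $\mathbb R^{2d}$; hence, once I know $V_g\gamma\in W(L^1_v)$ it automatically lies in the continuous subspace $W_0(L^1_v)$ to which Proposition \ref{WS-prop:1} applies, and the sampling estimate yields $\sigma_{L,g,\gamma}\le C_{M,v}\Vert V_g\gamma\Vert_{W(L^1_v)}<+\infty$, with $C_{M,v}$ the explicit constant \eqref{prop:1-est:3}.

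The substantive point, and the main obstacle, is the claim that $g,\gamma\in M^1_v$ forces $V_g\gamma\in W(L^1_v)$ together with the product estimate $\Vert V_g\gamma\Vert_{W(L^1_v)}\le c_v\Vert g\Vert_{M^1_v}\Vert\gamma\Vert_{M^1_v}$. I would split this into two ingredients. First, for a fixed nonzero Schwartz window $g_0$, the smoothness of $g_0$ gives the mapping property $V_{g_0}\colon M^1_v\to W(L^1_v)$ with $\Vert V_{g_0}h\Vert_{W(L^1_v)}\le c_v\Vert h\Vert_{M^1_v}$; this upgrades the mere integrability $V_{g_0}h\in L^1_v$ (the very definition of $M^1_v$) to control of the local suprema $\Vert\chi_r V_{g_0}h\Vert_{L^\infty}$, and is where all the analytic weight sits — it is recorded in \cite{GRO1} as part of the Wiener-amalgam description of $M^1_v$. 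Second, for the general (possibly non-smooth) window $g\in M^1_v$ I would transfer to $g_0$ by the standard pointwise convolution inequality for the STFT,
\[
\vert V_g\gamma(z)\vert\le\frac{1}{\Vert g_0\Vert_{L^2}^2}\bigl(\vert V_{g_0}\gamma\vert*\vert V_{g_0}g\vert\bigr)(z),\quad z\in\mathbb R^{2d}.
\]
Here $V_{g_0}\gamma\in W(L^1_v)$ by the first ingredient, while $V_{g_0}g\in L^1_v$ by definition of the modulation norm, so the Wiener-amalgam convolution relation $W(L^1_v)*L^1_v\subseteq W(L^1_v)$ (together with norm submultiplicativity) delivers $V_g\gamma\in W(L^1_v)$ and the asserted product bound.

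Finally, chaining the two estimates with conclusion \eqref{bGf_eqt:2} of Proposition \ref{bGf_prop:1} gives
\[
\Vert S^L_{g,\gamma}\Vert_{\mathcal L(X)}\le\frac{C}{\vert\det L\vert}\,\sigma_{L,g,\gamma}\le\frac{C\,C_{M,v}\,c_v}{\vert\det L\vert}\,\Vert g\Vert_{M^1_v}\Vert\gamma\Vert_{M^1_v},
\]
where $C$ is the tfs-invariance constant of \eqref{PST8}. The resulting constant is independent of the windows $g,\gamma$ (depending only on $v$, on $L$ through $C_{M,v}$ and $\vert\det L\vert$, and on $X$ through $C$), which is the estimate \eqref{bGf_eqt:3}. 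I would present the argument in exactly this order — the lattice reformulation, then the $W(L^1_v)$-membership lemma, then the assembling of constants — with the middle step being the genuinely delicate one.
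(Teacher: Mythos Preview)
Your approach is essentially the paper's: verify $V_g\gamma\in W_0(L^1_v)$ with the product bound, apply the sampling estimate of Proposition~\ref{WS-prop:1} to the lattice $\mathcal J L^{-T}\mathbb Z^{2d}$, and feed into Proposition~\ref{bGf_prop:1}. The only difference is that where you sketch the proof of $\Vert V_g\gamma\Vert_{W(L^1_v)}\le c_v\Vert g\Vert_{M^1_v}\Vert\gamma\Vert_{M^1_v}$ via the change-of-window convolution inequality, the paper simply quotes it as \cite[Proposition~12.1.11]{GRO1}.

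One small omission: Proposition~\ref{bGf_prop:1} also requires hypothesis~\eqref{Gf_eqt:0}, namely $\gamma\in L^1(\mathbb R^d)$ and $\hat g\in L^1(\mathbb R^d)$, which you do not explicitly address (you only invoke $M^1_v\subset L^2$ for continuity of $V_g\gamma$). The paper handles this via \cite[Proposition~12.1.4]{GRO1}; equivalently one may use the standard embeddings $M^1_v\hookrightarrow M^1\hookrightarrow L^1\cap\mathcal F L^1$. Also, in your convolution inequality the second factor should strictly be $\vert V_g g_0\vert$ rather than $\vert V_{g_0}g\vert$; since $\vert V_g g_0(z)\vert=\vert V_{g_0}g(-z)\vert$ and the polynomial weight $v$ is symmetric, this makes no difference to the $L^1_v$ estimate, but it is worth writing correctly.
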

\begin{proof}
From \cite[Proposition 12.1.4]{GRO1} we know that $g, \gamma\in M^1_v$ implies that $\gamma\in L^1_{u_1}(\mathbb R^d)$ and $\hat g\in L^1_{u_2}(\mathbb R^d)$, where 
\begin{equation}\label{bGf_eqt:4}
u_1(x):=\inf\limits_{\omega}v(x,\omega)\qquad u_2(\omega):=\inf\limits_{x}v(x,\omega)\,.
\end{equation}
Since $v$ (hence $u_1$, $u_2$) are polynomial weights, $u_1\ge 1$ and $u_2\ge 1$.  It follows at once that $\gamma$ and $\hat g$ belong to the space $L^1(\mathbb R^d)$, as required by Proposition \eqref{bGf_prop:1}.
\cite[Proposition 12.1.11]{GRO1} also gives that $V_g\gamma\in W(L^1_v)$, with the estimate
\begin{equation}\label{bGf_eqt:5}
\Vert V_g\gamma\Vert_{W(L^1_v)}\le C\Vert g\Vert_{M^1_v}\Vert\gamma\Vert_{M^1_v}\,,
\end{equation}
with some positive constant $C$ independent of $g$ and $\gamma$. For the sequel, it is useful recalling that
\begin{equation}\label{bGf_eqt:5.1}
\Vert V_g\gamma\Vert_{W(L^1_v)}=\sum\limits_{(r,s)\in\mathbb Z^{2d}}v(r,s)\Vert\chi_{(r,s)}V_g\gamma\Vert_{L^\infty}\,,
\end{equation}
where we set $Q_{(r,s)}:=(r,s)+Q$, being $Q:=[0,1]^{2d}$ the unit cube, and $\chi_{(r,s)}$ is the characteristic function of the cube $Q_{(r,s)}$.
\newline
Since $V_g\gamma$ is also a continuous function on $\mathbb R^{2d}$ (for it is the Fourier transform of $q_L(x,\omega)\in L^1(\mathbb R^{2d})$), applying Proposition \ref{WS-prop:1} with the matrix $\mathcal JL^{-T}$ instead of $L$, we get the convergence of the series in \eqref{bGf_eqt:1}, as required in the statement of Proposition \ref{bGf_prop:1}; moreover for the sum $\sigma_{L,g,\gamma}$ we get the estimate
\begin{equation}\label{bGf_eqt:6}
\sigma_{L,g,\gamma}\le C_{\mathcal J L^{-T},v}\Vert V_g\gamma\Vert_{W(L^1_v)}\,,
\end{equation}   
where \eqref{prop:1-est:3} (with $\mathcal J L^{-T}$ replaced by $L$) provides an explicit (though involved) dependence on $L$ and $v$ of the contant $C_{\mathcal J L^{-T},v}$, which, instead, is independent of $g$ and $\gamma$.
\newline
So we are in the position to apply Proposition \ref{bGf_prop:1} to conclude that $S^L_{g,\gamma}$ extends to a linear bounded operator in $\mathcal L(X)$. Estimate \eqref{bGf_eqt:3} then follows from gathering the preceding estimates \eqref{bGf_eqt:2}, \eqref{bGf_eqt:5}, \eqref{bGf_eqt:6}.  
\end{proof}

\subsection{Invertibility of  Gabor operators}\label{invGf_sct}
As in the preceding section, we take measurable functions $g=g(t)$ and $\gamma=\gamma(t)$ satisfying assumption \eqref{Gf_eqt:0}. We seek conditions on $L$ and the window functions $g$ and $\gamma$ ensuring the invertibility of the Gabor frame operator $S^L_{g,\gamma}$ as a linear bounded operator in $\mathcal L(X)$.
\newline
From stating Proposition \ref{PROPLPER2} for the symbol $q_L(x,\omega)$ defined by \eqref{Gf_eqt:1} through $g$ and $\gamma$ (recall in particular \eqref{Gf_eqt:9.1}, \eqref{Gf_eqt:9.2}) we get the following
\begin{proposition}\label{InvGf_prop:1}
Consider $g$ and $\gamma$ satisfying \eqref{Gf_eqt:0}, $(\gamma,g)_{L^2}\neq 0$ and $L\in GL(2d)$ such that
\begin{equation}\label{InvGf_eqt:2}
\sum_{\kappa\in \mathbb Z^{2d}_0} v(\mathcal J L^{-T}\kappa)\left\vert V_g\gamma\left(\mathcal J L^{-T}\kappa\right)\right\vert< \frac{1}{C}\left\vert (\gamma,g)_{L^2}\right\vert\,,
\end{equation}
where  $C$ is the constant in \eqref{PST8}. Then $S^L_{g,\gamma}$ is invertible as an element of $\mathcal L(X)$. The norm of the inverse operator satisfies
\begin{equation}\label{InvGf_eqt:2.1}
\Vert (S_{g,\gamma}^L)^{-1}\Vert_{\mathcal L(X)}\leq \dfrac{\vert \textup{det}L\vert}{\left(1+C\right) \vert (\gamma, g)_{L^2}\vert-C \sigma_{L,g,\gamma}},
\end{equation}
where $\sigma_{L,g,\gamma}$ is defined in \eqref{bGf_eqt:1} and $C$ is again the constant in \eqref{PST8}.
\end{proposition}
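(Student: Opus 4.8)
The plan is to recognize the Gabor operator $S^L_{g,\gamma}$ as the Kohn--Nirenberg pseudodifferential operator ${\rm Op}_0(q_L)=q_L(\cdot,D)$ with $L$-periodized symbol $q_L$, and then to read off the statement from Proposition \ref{PROPLPER2} applied to the symbol $q$ of \eqref{Gf_eqt:1}. The starting point is Proposition \ref{PROPEQUIV}: under assumption \eqref{Gf_eqt:0} the identity \eqref{PREQ1}, $S^L_{g,\gamma}u=q_L(\cdot,D)u$, holds for every $u\in\mathcal S(\mathbb R^d)$, and \eqref{Gf_eqt:0} also guarantees $q\in L^1(\mathbb R^{2d})$, which is exactly what Proposition \ref{PROPLPER2} requires of its symbol. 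So the whole task reduces to matching the two hypotheses of that proposition with the data of the present statement.

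Second, I would carry out the dictionary between the abstract quantities $\sigma_0(q)$, $\sigma_{L,v}(q)$ appearing in Proposition \ref{PROPLPER2} and the time-frequency quantities here. Recall the computation \eqref{Gf_eqt:9.1}, which gives $\hat q(\eta,z)=V_g\gamma(\mathcal J(\eta,z))$, so that $\hat q(L^{-T}\kappa)=V_g\gamma(\mathcal J L^{-T}\kappa)$ for all $\kappa\in\mathbb Z^{2d}$ as in \eqref{Gf_eqt:11}; evaluating at the origin and invoking \eqref{Gf_eqt:9.2} yields $\int_{\mathbb R^{2d}}q(z)\,dz=\hat q(0,0)=(\gamma,g)_{L^2}$. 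Consequently $\sigma_0(q)=|(\gamma,g)_{L^2}|$, the nonvanishing requirement becomes $(\gamma,g)_{L^2}\neq 0$, the summability condition \eqref{PROPLPER2.1} turns verbatim into \eqref{InvGf_eqt:2}, and $\sigma_{L,v}(q)$ coincides with $\sigma_{L,g,\gamma}$ from \eqref{bGf_eqt:1}. With these substitutions the conclusion \eqref{PROPLPER2.1.1} of Proposition \ref{PROPLPER2} is literally \eqref{InvGf_eqt:2.1}.

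The step I expect to need the most care is transferring invertibility from ${\rm Op}_0(q_L)$ to $S^L_{g,\gamma}$ as elements of $\mathcal L(X)$, since the identity \eqref{PREQ1} is only asserted on $\mathcal S(\mathbb R^d)$. I would first note that \eqref{InvGf_eqt:2} bounds only the tail over $\kappa\neq 0$, while the term $\kappa=0$ contributes $v(0)|V_g\gamma(0)|=|(\gamma,g)_{L^2}|<\infty$; hence $\sigma_{L,g,\gamma}<\infty$ and Proposition \ref{bGf_prop:1} makes $S^L_{g,\gamma}$ a bounded operator on $X$. Since $\mathcal S(\mathbb R^d)$ is dense in the tfs invariant space $X$ and both $S^L_{g,\gamma}$ and ${\rm Op}_0(q_L)$ are bounded on $X$ and agree on $\mathcal S(\mathbb R^d)$, they are equal in $\mathcal L(X)$. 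Proposition \ref{PROPLPER2} then gives the invertibility of ${\rm Op}_0(q_L)$ and the norm bound, which through the dictionary above are exactly the invertibility of $S^L_{g,\gamma}$ and the estimate \eqref{InvGf_eqt:2.1}.
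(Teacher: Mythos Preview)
Your proposal is correct and follows exactly the route the paper takes: the paper simply states this proposition as what one gets ``from stating Proposition \ref{PROPLPER2} for the symbol $q_L(x,\omega)$ defined by \eqref{Gf_eqt:1} through $g$ and $\gamma$ (recall in particular \eqref{Gf_eqt:9.1}, \eqref{Gf_eqt:9.2}).'' Your dictionary $\hat q(L^{-T}\kappa)=V_g\gamma(\mathcal J L^{-T}\kappa)$, $\sigma_0(q)=|(\gamma,g)_{L^2}|$, $\sigma_{L,v}(q)=\sigma_{L,g,\gamma}$ and the density argument to pass from $\mathcal S(\mathbb R^d)$ to $X$ are precisely the verifications implicit in that one-line reduction.
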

\begin{remark}
{\rm Notice that condition \eqref{bGf_eqt:1}, ensuring that $S^L_{g,\gamma}$ extends to an element of $\mathcal L(X)$, trivially follows from \eqref{InvGf_eqt:2}.}
\end{remark}
\begin{corollary}\label{CORFRAME} Consider $L\in GL(2d)$ and $g\in L^1\setminus\{0\}$, such that $\hat g\in L^1$, which satisfy
\begin{equation}\label{CORFRAME_eqt:1}
\sum_{\kappa\in \mathbb Z^{2d}_0} v(\mathcal J L^{-T}\kappa)\left\vert V_g g\left(\mathcal J L^{-T}\kappa\right)\right\vert< \Vert g  \Vert_{L^2}^2\,,
\end{equation}
then the Gabor system $\mathcal G_L:=\{\pi_{L\kappa}g \}_{\kappa\in\mathbb Z^{2d}}$ is a frame in $L^2$, with possible frame bounds
\begin{equation}\label{CORFRAMEeqt:2}
\begin{array}{l}
A= \frac{2\Vert g\Vert_{L^2}^2 -\sigma _{L,g}}{\vert\textup{det}L\vert};\\
\\
B= \frac{\sigma_{L,g}}{\vert \textup{det}L\vert}< \frac {2}{\vert \textup{det} L\vert}\Vert g\Vert_{L^2}^2,
\end{array}
\end{equation}
where $\sigma_{L,g}:=\sigma_{L,g,g}$.
\end{corollary}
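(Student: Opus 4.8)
The plan is to specialize the general invertibility and boundedness results of Subsections~\ref{bGf_sct}--\ref{invGf_sct} to the Hilbert space $X=L^2(\mathbb R^d)$ with the choice $\gamma=g$, and then to convert the resulting operator estimates into frame bounds via Theorem~\ref{THEFRAME}. First I would record that $L^2(\mathbb R^d)$ is a tfs invariant space with constant $C=1$ in \eqref{PST8}: since the time-frequency shifts $\pi_z$ are unitary on $L^2$, one has $\Vert\pi_z u\Vert_{L^2}=\Vert u\Vert_{L^2}\le v(z)\Vert u\Vert_{L^2}$, because every polynomial weight satisfies $v(z)\ge 1$. Next I would observe that, as $\mathbb Z^{2d}$ is symmetric under $\kappa\mapsto-\kappa$, the operator $S^L_{g,g}$ of \eqref{GF oprt} coincides with the frame operator $S\colon u\mapsto\sum_{\kappa}(u,\pi_{L\kappa}g)_{L^2}\,\pi_{L\kappa}g$ of the system $\mathcal G_L=\{\pi_{L\kappa}g\}_{\kappa\in\mathbb Z^{2d}}$, so that $(Sf,f)_{L^2}=\sum_{\kappa}\vert(f,\pi_{L\kappa}g)_{L^2}\vert^2$.

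With these identifications, the hypothesis \eqref{CORFRAME_eqt:1} is exactly condition \eqref{InvGf_eqt:2} read for $\gamma=g$ and $C=1$, while $(\gamma,g)_{L^2}=\Vert g\Vert_{L^2}^2\neq 0$ since $g\neq 0$. Proposition~\ref{InvGf_prop:1} then gives that $S^L_{g,g}$ is invertible in $\mathcal L(L^2)$; as its boundedness is already guaranteed by Proposition~\ref{bGf_prop:1} (whose assumption \eqref{bGf_eqt:1} is implied by \eqref{CORFRAME_eqt:1}), the equivalence $i)\Leftrightarrow iii)$ in Theorem~\ref{THEFRAME} yields that $\mathcal G_L$ is a frame.

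It remains to produce the explicit constants. For the upper bound I would invoke \eqref{bGf_eqt:2} with $C=1$ and $\gamma=g$, which reads $\Vert S^L_{g,g}\Vert_{\mathcal L(L^2)}\le \sigma_{L,g}/\vert\det L\vert$; since $(Sf,f)_{L^2}\le\Vert S\Vert\,\Vert f\Vert_{L^2}^2$, any number not smaller than $\Vert S\Vert$ is an admissible upper frame bound, so $B=\sigma_{L,g}/\vert\det L\vert$ works. For the lower bound I would use the inverse-norm estimate \eqref{InvGf_eqt:2.1} with $C=1$ and $\gamma=g$, namely
\[
\Vert (S^L_{g,g})^{-1}\Vert_{\mathcal L(L^2)}\le \frac{\vert\det L\vert}{2\Vert g\Vert_{L^2}^2-\sigma_{L,g}}\,;
\]
as $S$ is positive, self-adjoint and invertible, its optimal lower frame bound is $1/\Vert S^{-1}\Vert$, and every smaller number is admissible, whence $A=(2\Vert g\Vert_{L^2}^2-\sigma_{L,g})/\vert\det L\vert$ is a valid lower bound. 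Positivity of $A$ and the strict inequality $B<2\Vert g\Vert_{L^2}^2/\vert\det L\vert$ both follow by isolating the $\kappa=0$ term, writing $\sigma_{L,g}=\Vert g\Vert_{L^2}^2+\sum_{\kappa\neq 0}v(\mathcal J L^{-T}\kappa)\vert V_gg(\mathcal J L^{-T}\kappa)\vert$ and bounding the tail by $\Vert g\Vert_{L^2}^2$ through \eqref{CORFRAME_eqt:1}, so that $\sigma_{L,g}<2\Vert g\Vert_{L^2}^2$.

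All the steps are routine specializations of the earlier propositions; the only points that require genuine care are verifying that $C=1$ for $L^2$, recognizing via the reindexing $\kappa\mapsto-\kappa$ that $S^L_{g,g}$ is precisely the frame operator of $\mathcal G_L$, and keeping the inequalities in the correct direction when passing from the operator-norm estimates to the (generally non-optimal) frame bounds $A$ and $B$ stated in the corollary.
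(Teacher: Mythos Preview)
Your argument is correct and is precisely the intended specialization: the paper states Corollary~\ref{CORFRAME} without an explicit proof, and your derivation—taking $X=L^2$ with $C=1$ in \eqref{PST8}, setting $\gamma=g$ in Propositions~\ref{bGf_prop:1} and~\ref{InvGf_prop:1}, and invoking Theorem~\ref{THEFRAME}—is exactly what the authors have in mind. The only details worth flagging (and you flagged them) are the identification of $S^L_{g,g}$ with the frame operator via $\kappa\mapsto-\kappa$, the use of $v(0)=1$ and $V_gg(0)=\Vert g\Vert_{L^2}^2$ when splitting off the $\kappa=0$ term of $\sigma_{L,g}$, and the direction of the inequalities when passing from the norm bounds \eqref{bGf_eqt:2}, \eqref{InvGf_eqt:2.1} to the (possibly non-optimal) frame constants.
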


\section{The diagonal case}\label{diag_sct}
With the aim of making explicit the conditions on $g, \gamma$ and $L$ for the invertibility of the Gabor operator $S^L_{g,\gamma}$ we restrict the analysis to the diagonal lattice $\Lambda=L\mathbb Z^{2d}$, that is we assume the matrix $L\in GL(2d)$ to be a diagonal one
\begin{equation}\label{InvGf_eqt:1}
L=diag(\alpha_1,\dots,\alpha_d,\beta_1,\dots,\beta_d)\,,\quad\alpha_j\,,\,\,\beta_j>0\,,\quad\mbox{for}\,\,j=1,\dots,d\,;
\end{equation}
then one computes at once
\begin{equation}\label{InvGf_eqt:1.1}
L^{-T}=diag\left(\frac1{\alpha_1},\dots.\frac1{\alpha_d},\frac1{\beta_1},\dots,\frac1{\beta_d}\right)
\end{equation}
and for every $\kappa=(h,k)\in\mathbb Z^{2d}$ 
\begin{equation*}
\mathcal J L^{-T}\kappa=\left(-\frac{k}{\beta},\frac{h}{\alpha}\right)\,,
\end{equation*}
where for $h=(h_1,\dots,h_d)$, $k=(k_1,\dots, k_d)$,  $\alpha:=(\alpha_1,\dots,\alpha_d)$, $\beta:=(\beta_1,\dots,\beta_d)$, we set
\begin{equation*}
\frac{h}{\alpha}:=\left(\frac{h_1}{\alpha_1},\dots,\frac{h_d}{\alpha_d}\right)\,,\quad \frac{k}{\beta}:=\left(\frac{k_1}{\beta_1},\dots,\frac{k_d}{\beta_d}\right) 
\end{equation*}  
We consider now the case where $\gamma, g\in M^1_v$. Then from \cite[Proposition 12.1.11]{GRO1} we know that $V_g\gamma\in W(L^1_v)$ and it is a continuous function on $\mathbb R^{2d}$. thus in view of Proposition \ref{WS-prop:1} the left-hand side of \eqref{InvGf_eqt:2} can be estimated by the $W(L^1_v)-$norm of $V_g\gamma$. However, since the origin is excluded from the aforementioned summation, by revisiting the arguments used in the proof of Proposition \ref{WS-prop:1} we show that actually we do not need the whole norm $\Vert V_g\gamma\Vert_{W(L^1_v)}$ in order to get an upper bound of the series in the left-hand side of \eqref{InvGf_eqt:2}: a sufficiently great number of terms of the series definining such a norm can be dropped out, provided that the $\alpha_j'$s and $\beta_j'$s are sufficiently small. 
\newline
If we look at the proof of Proposition \ref{WS-prop:1} the key point is to majorize each of the terms $v\left(-\frac{k}{\beta}, \frac{h}{\alpha}\right)\left\vert V_g\gamma\left(-\frac{k}{\beta},\frac{h}{\alpha}\right)\right\vert$ in \eqref{InvGf_eqt:2} by $\Vert\chi_{(r,s)}V_g\gamma\Vert_{L^\infty}$, with a suitable pair $(r,s)\in\mathbb Z^{2d}$ such that the lattice point $\left(-\frac{k}{\beta}, \frac{h}{\alpha}\right)$ belongs to the cube $Q_{(r,s)}$.

\smallskip
The next technical lemma makes better explicit the matter.
\begin{lemma}\label{InvGf_lemma:1}
If $(r,s)\in\mathbb Z^{2d}$ satisfies
\begin{equation}\label{InvGf_eqt:2.1.1}
\vert r_j\vert<\left[\frac1{\alpha_j}\right]\quad\mbox{and}\quad\vert s_j\vert<\left[\frac1{\beta_j}\right]\,,\quad\mbox{for}\,\,j=1,\dots,d\,,
\end{equation}
then the cube $Q_{(r,s)}$ does not contain any point $\left(-\frac{k}{\beta},\frac{h}{\alpha}\right)$ with $(h,k)\neq (0,0)$.
\end{lemma}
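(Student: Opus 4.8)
The plan is to prove the contrapositive and to exploit the fact that everything splits coordinate by coordinate. Writing $\kappa=(h,k)$, the lattice point is $\mathcal J L^{-T}\kappa=\left(-\tfrac{k}{\beta},\tfrac{h}{\alpha}\right)$, whose first $d$ components are $-k_j/\beta_j$ and whose last $d$ components are $h_j/\alpha_j$. Since $Q_{(r,s)}$ is a product of unit intervals, the inclusion $\mathcal J L^{-T}\kappa\in Q_{(r,s)}$ is equivalent to the $2d$ scalar conditions $-k_j/\beta_j\in[r_j,r_j+1)$ and $h_j/\alpha_j\in[s_j,s_j+1)$, $j=1,\dots,d$. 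Here I work with the half-open cubes $(r,s)+[0,1)^{2d}$, which tile $\mathbb R^{2d}$ so that each point lies in a unique cube; this is harmless for the Wiener-norm estimate in which the lemma is used, because faces have measure zero, but it is essential here to prevent a lattice point sitting on a face from being charged to a \emph{smaller} cube than it should. It therefore suffices to show that each nonzero integer $k_j$ (resp. $h_j$) forces the modulus of the associated index $r_j$ (resp. $s_j$) to reach the integer part of the reciprocal of the corresponding lattice width.

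Suppose then that $\mathcal J L^{-T}\kappa\in Q_{(r,s)}$ with $\kappa\neq 0$, and pick $j_0$ with $k_{j_0}\neq 0$ or $h_{j_0}\neq 0$. Consider, say, the case $k_{j_0}\neq 0$, the other being identical after replacing $\beta_{j_0},r_{j_0}$ by $\alpha_{j_0},s_{j_0}$. Because $|k_{j_0}|\ge 1$, the first-block coordinate $t:=-k_{j_0}/\beta_{j_0}$ satisfies $|t|\ge 1/\beta_{j_0}$, and $t\in[r_{j_0},r_{j_0}+1)$ forces $r_{j_0}=\lfloor t\rfloor$. If $t>0$, monotonicity of the floor gives $r_{j_0}=\lfloor t\rfloor\ge\lfloor 1/\beta_{j_0}\rfloor=[1/\beta_{j_0}]$; if $t<0$, then $r_{j_0}=\lfloor t\rfloor\le\lfloor -1/\beta_{j_0}\rfloor=-\lceil 1/\beta_{j_0}\rceil\le-[1/\beta_{j_0}]$. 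In both cases $|r_{j_0}|\ge[1/\beta_{j_0}]$, and symmetrically $h_{j_0}\neq 0$ forces $|s_{j_0}|\ge[1/\alpha_{j_0}]$. Either conclusion contradicts the smallness of all the indices required in \eqref{InvGf_eqt:2.1.1}, so no nonzero $\kappa$ can produce a point of $Q_{(r,s)}$, which is the assertion.

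I expect the only genuinely delicate point to be the passage from the real inequality $|t|\ge 1/\beta_{j_0}$ to the integer inequality $|r_{j_0}|\ge[1/\beta_{j_0}]$: one has to get the direction of the floor right, treat the two signs of $t$ separately, and, most importantly, work with half-open rather than closed cubes. Indeed, with the closed cubes $[0,1]^{2d}$ a lattice point landing exactly on a right face (for instance $|k_{j_0}|/\beta_{j_0}=r_{j_0}+1$ when $1/\beta_{j_0}\in\mathbb Z$) would be assigned the index $r_{j_0}=|k_{j_0}|/\beta_{j_0}-1$, one unit too small, and the strict bound in \eqref{InvGf_eqt:2.1.1} would then fail to exclude it; this boundary case is exactly what the half-open convention removes. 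Once this bookkeeping is settled, the coordinatewise reduction makes the remainder immediate.
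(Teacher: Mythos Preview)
Your argument is essentially the paper's own: prove the contrapositive coordinatewise by showing that a nonzero $k_{j_0}$ (resp.\ $h_{j_0}$) forces $|r_{j_0}|\ge[1/\beta_{j_0}]$ (resp.\ $|s_{j_0}|\ge[1/\alpha_{j_0}]$). Your use of the floor/ceiling identities $r_{j_0}=\lfloor t\rfloor$ and $\lfloor -x\rfloor=-\lceil x\rceil$ is a bit cleaner than the paper's manual case split, and your explicit discussion of the half-open convention is a genuine addition: the paper defines $Q=[0,1]^{2d}$ closed but silently writes $r_j\le -k_j/\beta_j<r_j+1$ in the proof, exactly the convention you argue is needed.

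One indexing point worth flagging. Both your computation and the paper's own derive $|r_{j_0}|\ge[1/\beta_{j_0}]$ from $k_{j_0}\neq 0$ and $|s_{j_0}|\ge[1/\alpha_{j_0}]$ from $h_{j_0}\neq 0$, because the first $d$ coordinates of $(-k/\beta,h/\alpha)$ involve $\beta$ and the last $d$ involve $\alpha$. The hypothesis \eqref{InvGf_eqt:2.1.1} as printed, however, pairs $r$ with $\alpha$ and $s$ with $\beta$, so your sentence ``either conclusion contradicts \eqref{InvGf_eqt:2.1.1}'' is, strictly speaking, contradicting the \emph{corrected} inequalities $|r_j|<[1/\beta_j]$ and $|s_j|<[1/\alpha_j]$. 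This is a typo in the paper's statement (the paper's own concluding line \eqref{InvGf_eqt:4} repeats it), and it is immaterial for the application in \S\ref{diag_sct}, where all $\alpha_j,\beta_j$ are shrunk together; but you should record that your proof establishes the lemma with the $\alpha$'s and $\beta$'s interchanged in \eqref{InvGf_eqt:2.1.1}.
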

\begin{proof}
Consider a pair $(h,k)\in\mathbb Z^{2d}_0$ such that $\left(-\frac{k}{\beta},\frac{h}{\alpha}\right)\in Q_{(r,s)}$; this means that
\begin{equation}\label{InvGf_eqt:3}
r_j\le -\frac{k_j}{\beta_j}<r_j+1\,,\quad\mbox{and}\quad s_j\le \frac{h_j}{\alpha_j}<s_j+1\,,\quad\mbox{for}\,\,j=1,\dots,d\,.
\end{equation}
From our assumption, at least one component of $h$ or $k$ must be nonzero. Assume that is $k_1\neq 0$ and consider first the case where $k_1<0$. Let us set for simplicity $b_1:=\frac1{\beta_1}$ and $\ell_1:=-k_1$ (so that $\ell_1\ge 1$, as it must be an integer number). Then the inequality for $k_1$ in \eqref{InvGf_eqt:3} is restated as $r_1\le b_1\ell_1<r_1+1$ which yields $b_1<r_1+1$ hence $[b_1]+1\le r_1+1$. If otherwise $k_1>0$ then inequality for $k_1$ in \eqref{InvGf_eqt:3} becomes equivalent to $-1-r_1< b_1k_1\le -r_1$ which yields $-r_1\ge b_1$ hence $r_1\le -[b_1]$. In summary, $k_1\neq 0$ implies that $\vert r_1\vert\ge [b_1]$. Since the preceding argument applies whatever $k_j$ or $h_j$ is taken instead of $k_1$ (where $r$ must be replaced by $s$ if the argument is given for some nonzero component of $h$), we conclude that if the cube $Q_{(r,s)}$ contains any point $(h,k)\in\mathbb Z^{2d}_0$ then there exists at least one index $j\in\{1,2,\dots,d\}$ such that
\begin{equation}\label{InvGf_eqt:4}
\vert r_j\vert\ge\left[\frac1{\alpha_j}\right]\qquad\mbox{or}\qquad\vert s_j\vert\ge\left[\frac1{\beta_j}\right].
\end{equation} 
\end{proof}

\smallskip
In view of the preceding lemma, it appears that some of the series terms involved in the definition of the $\Vert V_g\gamma\Vert_{W(L^1_v)}$ see \eqref{bGf_eqt:5.1}, precisely those corresponding to cubes $Q_{(r,s)}$ which ``lower-left" vertex $(r,s)\in\mathbb Z^{2d}$ satisfies
\begin{equation}\label{InvGf_eqt:5}
\vert r_j\vert<\left[\frac1{\alpha}_j\right]\qquad\mbox{and}\qquad\vert s_j\vert<\left[\frac1{\beta_j}\right]\,,\quad\mbox{for}\,\,j=1,\dots,d\,,
\end{equation}
do not contribute to majorize any term $v\left(-\frac{k}{\beta}, \frac{h}{\alpha}\right)\left\vert V_g\gamma\left(-\frac{k}{\beta},\frac{h}{\alpha}\right)\right\vert$ of the series in \eqref{InvGf_eqt:2}. Let us denote by $Q_{(\alpha,\beta)}$ the union of those cubes $Q_{(r,s)}$ whose vertex $(r,s)\in\mathbb Z^{2d}$ satisfies \eqref{InvGf_eqt:5}. Then arguing as above and in view of \eqref{WS-eq:1} we have
\begin{equation}\label{InvGf_eqt:6}
\begin{split}
\sum_{(h,k)\in \mathbb Z^{2d}_0}& v\left(-\frac{k}{\beta}, \frac{h}{\alpha}\right)\left\vert V_g\gamma\left(-\frac{k}{\beta},\frac{h}{\alpha}\right)\right\vert\\
&\le C_{\mathcal J L^{-T},v}\left(\Vert V_g\gamma\Vert_{W(L^1_v)}-\!\!\sum\limits_{(r,s)\in Q_{(\alpha,\beta)}}v(r,s)\Vert\chi_{(r,s)}V_g\gamma\Vert_{L^\infty} \right)\,.
\end{split}
\end{equation}
By replacing $L$ with $\mathcal J L^{-T}$ in \eqref{prop:1-est:3} we obtain
\begin{equation}\label{InvGf_eqt:7}
C_{\mathcal J L^{-T},v}=M_v\prod_{j=1}^d\left([\alpha_j]+1\right)\left([\beta_j]+1\right)\,,
\end{equation}
where $M_v$ is the constant in \eqref{WS-eq:1}.
It is clear that in order to make advantage from the estimate above, at least one of the $\alpha_j,\beta_j$'s must be less or equal than 1; otherwise $\left[\frac1{\alpha_j}\right]=\left[\frac1{\beta_j}\right]=0$ for all $j=1,\dots,d$ and $Q_{(\alpha,\beta)}$ should be empty. Actually, we notice that if we assume $0<\alpha_j\le 1$ and $0<\beta_j\le 1$ for all $j=1,\dots,d$ then we can upper bound $C_{\mathcal J L^{-T},v}$ in \eqref{InvGf_eqt:7} by $4^dM_v$. Since $\sum\limits_{(r,s)\in Q_{(\alpha,\beta)}}v(r,s)\Vert\chi_{(r,s)}V_g\gamma\Vert_{L^\infty}$ can be regarded as a ``partial sum" of the series defining the norm $\Vert V_g\gamma\Vert_{W(L^1_v)}$, if we take all $\alpha_j'$s and $\beta_j'$s sufficiently small, the number of terms of $\sum\limits_{(r,s)\in Q_{(\alpha,\beta)}}\!\!v(r,s)\Vert\chi_{(r,s)}V_g\gamma\Vert_{L^\infty}$ becomes arbitrarily large and the corresponding ``reminder"
$$
\Vert V_g\gamma\Vert_{W(L^1_v)}-\!\!\sum\limits_{(r,s)\in Q_{(\alpha,\beta)}}v(r,s)\Vert\chi_{(r,s)}V_g\gamma\Vert_{L^\infty}
$$
arbitrarily small.
\newline
In particular, since the constant $C$ in the right-hand side of \eqref{InvGf_eqt:2} is also independent of $L$ (and so of $\alpha_j, \beta_j$'s), we can take the $\alpha_j'$s and $\beta_j'$s small enough so as to make the right-hand side of \eqref{InvGf_eqt:6} smaller than $\frac1{C}\vert(\gamma,g)_{L^2}\vert$, as required in \eqref{InvGf_eqt:2}, therefore the following result holds as a corollary of Proposition \ref{InvGf_prop:1}.
\newline
From now on we write $S^{\alpha, \beta}_{g,\gamma}:=S^L_{g,\gamma}$, with $L=diag(\alpha_1, \dots,\alpha_d, \beta_1,\dots, \beta_d)$.
\begin{corollary}\label{InvGf_cor:1}
Let $\gamma, g\in M^1_v$ satisfy $(\gamma, g)_{L^2}\neq 0$ and $\theta:=\max\limits_{j=1,\dots,d}\{\alpha_j, \beta_j\}$. Then there exists $0<\theta_0<1$ depending only on the weight $v$ and $(\gamma, g)_{L^2}$ such that if $\theta\le\theta_0$ then the Gabor operator $S^{\alpha, \beta}_{g,\gamma}$ is invertible as an element of $\mathcal L(X)$.
\end{corollary}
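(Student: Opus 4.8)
The plan is to verify the sufficient invertibility condition \eqref{InvGf_eqt:2} of Proposition \ref{InvGf_prop:1} and then simply invoke that proposition. The groundwork has essentially been laid in the discussion preceding the statement: the estimate \eqref{InvGf_eqt:6} bounds the series appearing in \eqref{InvGf_eqt:2} by a prefactor $C_{\mathcal J L^{-T},v}$ times the ``remainder'' of the Wiener-norm series over the indices lying outside $Q_{(\alpha,\beta)}$. First I would record that, since $\gamma,g\in M^1_v$, \cite[Proposition 12.1.4]{GRO1} gives $\gamma\in L^1(\mathbb R^d)$ and $\hat g\in L^1(\mathbb R^d)$ (so \eqref{Gf_eqt:0} holds and Proposition \ref{InvGf_prop:1} is applicable), while \cite[Proposition 12.1.11]{GRO1} gives $V_g\gamma\in W(L^1_v)$ with $V_g\gamma$ continuous on $\mathbb R^{2d}$, so that Proposition \ref{WS-prop:1} and the estimate \eqref{InvGf_eqt:6} are available.

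Next I would restrict to $\theta\le 1$, so that $0<\alpha_j,\beta_j\le 1$ for every $j$; then $[\alpha_j],[\beta_j]\le 1$ and the product in \eqref{InvGf_eqt:7} keeps $C_{\mathcal J L^{-T},v}$ bounded by $4^dM_v$ uniformly in $\alpha,\beta$. The key analytic step is to show that the remainder
\begin{equation*}
\mathcal R(\alpha,\beta):=\Vert V_g\gamma\Vert_{W(L^1_v)}-\sum\limits_{(r,s)\in Q_{(\alpha,\beta)}}v(r,s)\Vert\chi_{(r,s)}V_g\gamma\Vert_{L^\infty}
\end{equation*}
tends to $0$ as $\theta\to 0$. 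For this I would argue that, by \eqref{WS-norm}, the full series defining $\Vert V_g\gamma\Vert_{W(L^1_v)}$ is absolutely convergent, so its tail over the indices $(r,s)$ with $\max_j\{\vert r_j\vert,\vert s_j\vert\}\ge N$ vanishes as $N\to+\infty$; since, by \eqref{InvGf_eqt:5}, the complement $\mathbb Z^{2d}\setminus Q_{(\alpha,\beta)}$ is contained in such a tail with $N=\min_j\{[1/\alpha_j],[1/\beta_j]\}\ge[1/\theta]$ (using $\alpha_j,\beta_j\le\theta$), letting $\theta\to 0$ drives $\mathcal R(\alpha,\beta)$ to $0$.

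Combining these facts, the left-hand side of \eqref{InvGf_eqt:2} is bounded by $4^dM_v\,\mathcal R(\alpha,\beta)$, which becomes arbitrarily small, whereas the right-hand side $\frac1C\vert(\gamma,g)_{L^2}\vert$ is a fixed positive number (recall $C$ is the tfs-invariance constant of $X$ and, as in Example \ref{EXINV}, depends only on $v$ in the cases of interest). Hence I would pick $\theta_0\in(0,1)$ so small that $\theta\le\theta_0$ forces $4^dM_v\,\mathcal R(\alpha,\beta)<\frac1C\vert(\gamma,g)_{L^2}\vert$, which is exactly \eqref{InvGf_eqt:2}, and conclude invertibility of $S^{\alpha,\beta}_{g,\gamma}$ in $\mathcal L(X)$ via Proposition \ref{InvGf_prop:1}. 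I expect the main obstacle to be the simultaneous control of the two competing quantities: a priori $C_{\mathcal J L^{-T},v}$ could grow as $L$ varies, so the crux is that shrinking the $\alpha_j,\beta_j$ below $1$ both caps the prefactor at $4^dM_v$ and, through the product structure \eqref{InvGf_eqt:7} and Lemma \ref{InvGf_lemma:1}, enlarges $Q_{(\alpha,\beta)}$ enough to send the remainder to zero.
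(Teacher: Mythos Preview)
Your proposal is correct and follows essentially the same route as the paper: the argument preceding the corollary is precisely the one you outline, namely bounding the left-hand side of \eqref{InvGf_eqt:2} via \eqref{InvGf_eqt:6}, capping $C_{\mathcal J L^{-T},v}$ by $4^dM_v$ once $\theta\le 1$, and then using absolute convergence of the Wiener-norm series to make the remainder arbitrarily small. Your explicit tail estimate (with $N=[1/\theta]$) just makes precise what the paper states informally when it says the remainder becomes ``arbitrarily small'' for sufficiently small $\alpha_j,\beta_j$.
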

Unfortunately it seems that no explicit upper bound of $\theta$ is available without imposing additional decay estimates on the growth at infinity of $\gamma$ and $g$.

In order to find an explicit upper bound of $\theta$, defined as above, we introduce a suitable decay at infinity of window functions $\gamma$ and $g$. We consider the scale of Banach spaces $L^\infty_\varepsilon(\mathbb R^d)$ defined as in \cite{BogGar2020}.

\begin{definition}\label{defLinf-eps}
	Let $\varepsilon$ be a positive number. We define $L^\infty_{\varepsilon}(\mathbb R^d)$ to be the vector space of distributions $f\in\mathcal S^\prime(\mathbb R^d)$ such that $(1+\vert\cdot\vert)^{d+\varepsilon} f\in L^\infty(\mathbb R^d)$, equipped with its natural norm
	\begin{equation}\label{normLinf-eps}
		\Vert f\Vert_{L^\infty_\varepsilon}:=\Vert (1+\vert\cdot\vert)^{d+\varepsilon}f\Vert_{L^\infty}\,.
	\end{equation}
\end{definition}
\begin{remark}\label{rmkLinf-eps}
	{\rm It is straightforward to check that the norm \eqref{normLinf-eps} turns $L^\infty_{\varepsilon}(\mathbb R^d)$ into a Banach space. Moreover one can check that $L^\infty_{\varepsilon}(\mathbb R^d)$ is included in $L^2(\mathbb R^d)$ with continuous embedding; indeed for all $f\in L^\infty_{\varepsilon}(\mathbb R^d)$ we get at once
	\[
	\Vert f\Vert_{L^2}\le C_{\varepsilon}\Vert f\Vert_{L^\infty_\varepsilon}\,,
	\]
	with $C^2_{\varepsilon}:=\int_{\mathbb R^d}(1+\vert x\vert)^{-2(d+\varepsilon)}dx$.
	Then the Gabor transform is well defined by \eqref{stft} as a standard Lebesgue integral.
	\newline
	It can be also checked that $L^\infty_\varepsilon(\mathbb R^d)$ is included, with continuous imbedding, into the Wiener space $W(L^1)$, see Definition \ref{WS} with $v\equiv 1$.}
\end{remark}
In view of the application of Proposition \ref{InvGf_prop:1}, our main goal here is to find suitable decay conditions on $V_g\gamma(x,\omega)$ in order to provide an explicit bound of the series 
\begin{equation}\label{Vggamma series}
	\sum\limits_{(h,k)\in\mathbb Z^{2d}_0}\left\vert V_g\gamma\left(-\frac{k}{\beta},\frac{h}{\alpha}\right)\right\vert\,,
\end{equation}
by the sizes $\alpha_j$'s, $\beta_j$'s of the lattice associated to the  Gabor operator  $S^{\alpha, \beta}_{g, \gamma}$.
\subsection{Some preparatory results}\label{prep_sect}
Here and in the following we set for shortness $T^d:=\{0,1\}^d$.
\begin{lemma}\label{lemLinf-eps}
	Let $g\in L^\infty_\varepsilon(\mathbb R^d)$ satisfy
	\begin{equation}\label{Linf-eps:1}
		x_jx^\alpha g\in L^{\infty}_\varepsilon(\mathbb R^d)\,,\qquad\forall\,\alpha\in T^d\quad\mbox{and}\quad j=1,\dots,d\,.
	\end{equation}
	Then for every integer $k=1,\dots, d$
	\begin{equation}\label{Linf-eps:2}
		(1+\vert x\vert)^{d+\varepsilon}(1+\vert x_k\vert)^2\prod_{j\neq k}(1+\vert x_j\vert)\vert g(x)\vert\le 2^{d+1}\mathcal H^\varepsilon_{g,k}\,,\quad\forall\,x\in\mathbb R^d\,,
	\end{equation}
	where 
	\begin{equation}\label{Hcal}
		\mathcal H^\varepsilon_{g,k}:=\max\limits_{\alpha\in T^d}\left\{\Vert g\Vert_{L^\infty_\varepsilon}, \Vert x_k x^\alpha g\Vert_{L^\infty_\varepsilon}\right\}\,.
	\end{equation} 
	In particular
	\begin{equation}\label{Linf-eps:3}
		(1+\vert\cdot\vert)^{d+\varepsilon} g\in L^1(\mathbb R^d)\,,
	\end{equation}
and the following estimate holds
	\begin{equation}\label{Linf-eps:3.1}
		\Vert (1+\vert\cdot\vert)^{d+\varepsilon} g\Vert_{L^1}\le d^d 2^{2d+1}\mathcal H^\varepsilon_{g}\,,
	\end{equation}
	with
	\begin{equation}\label{Linf-eps:3.2}
		\mathcal H_g^\varepsilon:=\max\limits_{1\le k\le d}\mathcal H_{g,k}^\varepsilon\,.
	\end{equation}
\end{lemma}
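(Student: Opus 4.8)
The plan is to establish the pointwise bound \eqref{Linf-eps:2} first and then deduce both the integrability \eqref{Linf-eps:3} and the quantitative estimate \eqref{Linf-eps:3.1} from it by a multiplicative (geometric-mean) argument over the $d$ coordinate directions. For \eqref{Linf-eps:2}, fix $k$ and start from the elementary factorization
\[
(1+\vert x_k\vert)^2\prod_{j\neq k}(1+\vert x_j\vert)=(1+\vert x_k\vert)\prod_{j=1}^d(1+\vert x_j\vert)=\sum_{\alpha\in T^d}\bigl(\vert x^\alpha\vert+\vert x_k x^\alpha\vert\bigr),
\]
obtained by expanding $\prod_{j=1}^d(1+\vert x_j\vert)=\sum_{\alpha\in T^d}\vert x^\alpha\vert$. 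Multiplying through by $(1+\vert x\vert)^{d+\varepsilon}\vert g(x)\vert$, the $2^d$ terms that carry a factor $x_k$ are controlled at once: each equals $(1+\vert x\vert)^{d+\varepsilon}\vert x_k x^\alpha\vert\,\vert g(x)\vert\le\Vert x_k x^\alpha g\Vert_{L^\infty_\varepsilon}\le\mathcal H^\varepsilon_{g,k}$ by hypothesis \eqref{Linf-eps:1} and definition \eqref{Hcal}. It then remains to estimate the $2^d$ terms $(1+\vert x\vert)^{d+\varepsilon}\vert x^\alpha\vert\,\vert g(x)\vert$ with $\alpha\in T^d$.

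The term $\alpha=0$ is bounded by $\Vert g\Vert_{L^\infty_\varepsilon}$, and whenever $\alpha_k=1$ one rewrites $x^\alpha=x_k\,x^{\alpha-e_k}$ with $\alpha-e_k\in T^d$, so these are again $\le\mathcal H^\varepsilon_{g,k}$. The genuinely delicate terms, and the main obstacle, are the ``orphan'' monomials $x^\alpha$ with $\alpha_k=0$ and $\alpha\neq 0$: these carry no factor of $x_k$ and so are not directly majorized by the $x_k$-weighted quantities in $\mathcal H^\varepsilon_{g,k}$. I would dispatch them by splitting $\mathbb{R}^d$ into $\{\vert x_k\vert\ge 1\}$ and $\{\vert x_k\vert<1\}$. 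On the first region $\vert x^\alpha\vert\le\vert x_k\vert\,\vert x^\alpha\vert=\vert x_k x^\alpha\vert$, so the term is once more $\le\mathcal H^\varepsilon_{g,k}$; on the second region, where the factor $x_k$ provides no control, I would instead invoke \eqref{Linf-eps:1} for an index $j\neq k$ with $\alpha_j=1$, writing $x^\alpha=x_j\,x^{\alpha-e_j}$, which bounds the term by $\mathcal H^\varepsilon_{g,j}\le\mathcal H^\varepsilon_g$. Collecting the $2^d$ $x_k$-terms and the $2^d$ remaining terms produces \eqref{Linf-eps:2} with the constant $2^{d+1}$; I should note that the orphan contributions are most naturally bounded by $\mathcal H^\varepsilon_g=\max_k\mathcal H^\varepsilon_{g,k}$ rather than by $\mathcal H^\varepsilon_{g,k}$ alone, but since $\mathcal H^\varepsilon_{g,k}\le\mathcal H^\varepsilon_g$ this is exactly the form needed for the $L^1$ estimate below.

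With \eqref{Linf-eps:2} available for every $k$, the plan for \eqref{Linf-eps:3} and \eqref{Linf-eps:3.1} is to combine the $d$ inequalities multiplicatively. Using the factorization above, \eqref{Linf-eps:2} reads
\[
(1+\vert x\vert)^{d+\varepsilon}\vert g(x)\vert\le\frac{2^{d+1}\mathcal H^\varepsilon_g}{(1+\vert x_k\vert)\prod_{j=1}^d(1+\vert x_j\vert)}\,,\qquad k=1,\dots,d\,.
\]
Taking the product over $k$ and extracting the $d$-th root, the denominators combine to $\bigl(\prod_{j}(1+\vert x_j\vert)\bigr)^{(d+1)/d}$, giving
\[
(1+\vert x\vert)^{d+\varepsilon}\vert g(x)\vert\le 2^{d+1}\mathcal H^\varepsilon_g\prod_{j=1}^d(1+\vert x_j\vert)^{-(d+1)/d}\,.
\]
Since $(d+1)/d>1$, the right-hand side is integrable on $\mathbb{R}^d$ as a product of convergent one-dimensional integrals, which proves \eqref{Linf-eps:3}. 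Integrating and using $\int_{\mathbb{R}}(1+\vert t\vert)^{-(d+1)/d}\,dt=2d$ yields $\bigl(\int_{\mathbb{R}}(1+\vert t\vert)^{-(d+1)/d}dt\bigr)^d=(2d)^d=2^d d^d$, whence $\Vert(1+\vert\cdot\vert)^{d+\varepsilon}g\Vert_{L^1}\le 2^{d+1}\cdot 2^d d^d\,\mathcal H^\varepsilon_g=d^d 2^{2d+1}\mathcal H^\varepsilon_g$, which is precisely \eqref{Linf-eps:3.1}. The only quantitative point to verify is the elementary one-dimensional integral $\int_0^\infty(1+t)^{-(d+1)/d}\,dt=d$, on which the final constant hinges.
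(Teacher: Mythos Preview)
Your approach is essentially the same as the paper's: expand $(1+\vert x_k\vert)^2\prod_{j\neq k}(1+\vert x_j\vert)$ via $\prod_j(1+\vert x_j\vert)=\sum_{\alpha\in T^d}\vert x^\alpha\vert$, bound each monomial term against the $L^\infty_\varepsilon$ norms entering $\mathcal H^\varepsilon_{g,k}$, and then convert the pointwise bound into an integrable one of the form $\prod_j(1+\vert x_j\vert)^{-1-1/d}$. Two points of comparison are worth noting. First, you explicitly isolate and handle the ``orphan'' terms $\vert x^\alpha\vert$ with $\alpha_k=0$, $\alpha\neq 0$, and correctly observe that these are only controlled by $\mathcal H^\varepsilon_{g,j}$ for some $j\neq k$, hence by $\mathcal H^\varepsilon_g$; the paper simply bounds all $2^{d+1}$ terms by $\mathcal H^\varepsilon_{g,k}$ without addressing this subtlety, so your treatment is actually more careful here (and, as you note, the weaker bound with $\mathcal H^\varepsilon_g$ is all that is needed for \eqref{Linf-eps:3.1}). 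Your region-splitting $\{\vert x_k\vert\ge 1\}$ versus $\{\vert x_k\vert<1\}$ is unnecessary, though: the argument via $x^\alpha=x_j x^{\alpha-e_j}$ already works globally. Second, to pass from \eqref{Linf-eps:2} to the integrable decay, the paper does not take a geometric mean over $k$; instead, for each fixed $x$ it picks $k=k(x)$ with $\vert x_k\vert=\max_j\vert x_j\vert$ and uses $(1+\vert x_k\vert)^2\prod_{j\neq k}(1+\vert x_j\vert)\ge\prod_j(1+\vert x_j\vert)^{1+1/d}$ directly. Both routes yield exactly the same pointwise bound $2^{d+1}\mathcal H^\varepsilon_g\prod_j(1+\vert x_j\vert)^{-1-1/d}$ and hence the same constant $d^d 2^{2d+1}$ after the one-dimensional integration $\int_0^\infty(1+t)^{-1-1/d}dt=d$.
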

\begin{proof}
	Let us notice that
	\begin{equation}\label{prod}
		\prod_{j=1}^d(1+\vert x_j\vert)=\sum\limits_{\alpha\in T^d}\vert x^\alpha\vert\,.
	\end{equation}
	Assume, without loss of generality, $k=1$; after \eqref{prod}, we compute
	\begin{equation}\label{prod_1}
			(1+\vert x_1\vert)^2 \prod_{j\neq 1}(1+\vert x_j\vert)=\sum\limits_{\substack{\alpha\in T^d\\ \alpha_1=0}}\vert x^\alpha\vert
			+2\sum\limits_{\substack{\alpha\in T^d\\ \alpha_1=1}}\vert x^\alpha\vert+\sum\limits_{\substack{\alpha\in T^d\\ \alpha_1=1}}\vert x_1\vert \vert x^\alpha\vert\,,
	\end{equation}
	hence, by noticing that the number of multi-indices $\alpha\in T^d$ with either $\alpha_1=0$ or $\alpha_1=1$ is $2^{d-1}$, we have
	\[
	\begin{split}
		&(1+\vert x\vert)^{d+\varepsilon}(1+\vert x_1\vert)^2\prod_{j\neq 1}(1+\vert x_j\vert)\vert g(x)\vert=\sum\limits_{\substack{\alpha\in T^d\\ \alpha_1=0}}(1+\vert x\vert)^{d+\varepsilon}\vert x^\alpha g(x)\vert\\
		&+2 \sum\limits_{\substack{\alpha\in T^d\\ \alpha_1=1}}(1+\vert x\vert)^{d+\varepsilon}\vert x^\alpha g(x)\vert+\sum\limits_{\substack{\alpha\in T^d\\ \alpha_1=1}}(1+\vert x\vert)^{d+\varepsilon}\vert x_1 x^\alpha g(x)\vert\le 2^{d+1}\mathcal H^\varepsilon_{g,1}\,.
	\end{split}
	\] 
Notice that in \eqref{Linf-eps:2} we use the weight $(1+\vert x_k\vert)^2\prod_{j\neq k}(1+\vert x_j\vert)$, $1\le k\le d$ instead of the  easier weight $\prod_{j=1}^d(1+\vert x_j\vert)$, since this latter is not enough to get integrability of $(1+\vert x\vert)^{d+\varepsilon} g$ with respect to $x$ variable.
The presence of the power two of one factor $(1+\vert x_k\vert)^2$ in the former weight allows the $x$-integrability of $(1+\vert x\vert)^{d+\varepsilon} g(x)$, for it is enough to provide a bit faster decay at infinity with respect to all of $x$.  
	\newline
	For an arbitrary $x=(x_1,\dots,x_d)\in\mathbb R^d$ let $1\le k\le d$ be the integer number, dependent on $x$, such that
	\begin{equation}\label{max_k}
		\vert x_k\vert=\max\limits_{1\le j\le d}\vert x_j\vert\,.
	\end{equation}
	Then 
	\begin{equation}\label{manipulations}
		\begin{split}
			(1&+\vert x_k\vert)^2\prod_{j\neq k}(1+\vert x_j\vert)\\
			&=(1+\vert x_k\vert)\underbrace{(1+\vert x_k\vert)^{1/d}\dots(1+\vert x_k\vert)^{1/d}}_{d\,\,{\rm times}}\prod_{j\neq k}(1+\vert x_j\vert)\\
			&\ge\prod_{j=1}^{d}(1+\vert x_j\vert)^{1+1/d}
		\end{split}
	\end{equation}
	and from \eqref{Linf-eps:2} we obtain
	\begin{equation}\label{Linf-eps:2.1}
		(1+\vert x\vert)^{d+\varepsilon}\vert g(x)\vert\le 2^{d+1}\mathcal H^\varepsilon_{g}\prod_{j=1}^{d}(1+\vert x_j\vert)^{-1-1/d}\,,\quad\forall\,x\in\mathbb R^d\,.
	\end{equation}
	Then  \eqref{Linf-eps:3} follows as the function
$
	x\mapsto\prod_{j=1}^{d}(1+\vert x_j\vert)^{-1-1/d}
$
	is integrable in $\mathbb R^d$.
	\newline
By integrating    over $\mathbb R^d$ both sides of \eqref{Linf-eps:2.1} and applying Fubini's Theorem, we obtain
	\[
	\int_{\mathbb R^d}\prod_{j=1}^{d}(1+\vert x_j\vert)^{-1-1/d}dx=\left(2\int_0^{+\infty}(1+y)^{-1-1/d}dy\right)^d=2^d d^d\,,
	\]
which proves  \eqref{Linf-eps:3.1}.
\end{proof}

\medskip
\noindent
Let us now prove the following result
\begin{proposition}\label{propLinf-eps}
	For any $\varepsilon>0$, let $\gamma, g\in\mathcal S^\prime(\mathbb R^d)$ satisfy
	\begin{equation}\label{g-gamma}
		\begin{split}
			&x^\beta\partial^\alpha \gamma, x^\beta\partial_{x_j}\partial^\alpha \gamma, x_jx^\beta\partial^\alpha \gamma\in L^\infty_{\varepsilon}(\mathbb R^d)\,,\\
			&x^\beta\partial^\alpha g, x^\beta\partial_{x_j}\partial^\alpha g, x_jx^\beta\partial^\alpha g\in L^\infty_{\varepsilon}(\mathbb R^d)\,,
		\end{split}
	\end{equation}
	for all $\alpha,\beta\in T^d$ and every integer $1\le j\le d$.
	Then, for any $(x,\omega)\in\mathbb R^{2d}$ and any integers $1\le k\le d$, there holds
	\begin{equation}\label{Linf-eps_eqt:2}
		(1+\vert x\vert)^{d+\varepsilon/2}(1+\vert\omega_k\vert)^2\prod_{j\neq k}(1+\vert\omega_j\vert)\vert V_g\gamma(x,\omega)\vert\le 2^{2d+1}d^d\left(1+\frac1{\pi}\right)^{d+1}\!\!\!\!\!\mathcal K^\varepsilon_\gamma\mathcal K^\varepsilon_g
	\end{equation}
	where
	\begin{equation}\label{Linf-eps_eqt:3}
		\begin{split}
			& \mathcal K^\varepsilon_\gamma:=\max\limits_{\alpha,\beta\in T^d\,,\,\,1\le j\le d}\left\{\Vert x^\beta\partial^\alpha \gamma\Vert_{L^\infty_\varepsilon}, \Vert x^\beta\partial_{x_j}\partial^\alpha \gamma\Vert_{L^\infty_\varepsilon}, \Vert x_jx^\beta\partial^\alpha \gamma\Vert_{L^\infty_\varepsilon}\right\}\,;\\
			& \mathcal K^\varepsilon_g:=\max\limits_{\alpha,\beta\in T^d\,,\,\,1\le j\le d}\left\{\Vert x^\beta\partial^\alpha g\Vert_{L^\infty_\varepsilon}, \Vert x^\beta\partial_{x_j}\partial^\alpha g\Vert_{L^\infty_\varepsilon}, \Vert x_jx^\beta\partial^\alpha g\Vert_{L^\infty_\varepsilon}\right\}
		\end{split}
	\end{equation}
\end{proposition}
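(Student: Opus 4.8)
The plan is to view $V_g\gamma(x,\cdot)$ as the Fourier transform of the amplitude $\Phi_x(t):=\gamma(t)\overline{g(t-x)}$, to manufacture the $\omega$-weight by differentiating $\Phi_x$ in $t$, and to absorb the $x$-weight using the $L^1$-decay furnished by Lemma \ref{lemLinf-eps}. By \eqref{stft} one has $V_g\gamma(x,\omega)=\widehat{\Phi_x}(\omega)$. Under \eqref{g-gamma} every derivative of $\gamma,g$ occurring in the hypotheses lies in $L^\infty_\varepsilon(\mathbb R^d)\subset L^1(\mathbb R^d)$, so that the Leibniz rule together with the $L^1$-bounds derived below gives $\partial^\beta\Phi_x\in L^1(\mathbb R^d)$ for every $\beta$ with $|\beta|\le d+1$, i.e. $\Phi_x\in W^{d+1,1}(\mathbb R^d)$. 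Hence $\widehat{\partial^\beta\Phi_x}(\omega)=(2\pi i\omega)^\beta\widehat{\Phi_x}(\omega)$ for such $\beta$, whence
\[
|\omega^\beta|\,|V_g\gamma(x,\omega)|\le\frac1{(2\pi)^{|\beta|}}\,\|\partial^\beta\Phi_x\|_{L^1}\,,\qquad |\beta|\le d+1\,.
\]

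Next I would expand the $\omega$-weight exactly as in \eqref{prod_1}, now in the variable $\omega$:
\[
(1+|\omega_k|)^2\prod_{j\neq k}(1+|\omega_j|)=\sum_{\substack{\alpha\in T^d\\\alpha_k=0}}|\omega^\alpha|+2\sum_{\substack{\alpha\in T^d\\\alpha_k=1}}|\omega^\alpha|+\sum_{\substack{\alpha\in T^d\\\alpha_k=1}}|\omega_k\omega^\alpha|\,.
\]
This reduces the claim to uniform-in-$x$ bounds for $(1+|x|)^{d+\varepsilon/2}\|\partial^\alpha\Phi_x\|_{L^1}$ and, in the last group, for $(1+|x|)^{d+\varepsilon/2}\|\partial_{t_k}\partial^\alpha\Phi_x\|_{L^1}$, with $\alpha\in T^d$. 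Expanding $\partial^\alpha\Phi_x$ (resp. $\partial_{t_k}\partial^\alpha\Phi_x$) by Leibniz produces at most $2^{|\alpha|}$ (resp. $2^{|\alpha|+1}$, counted with multiplicity) terms $\partial^{\alpha'}\gamma(t)\,\overline{(\partial^{\alpha''}g)(t-x)}$. Since the total order in each coordinate is at most two, and equals two only in the direction $k$, in each such term at least one factor is a genuine $T^d$-derivative, while the other is of the form $\partial_{x_k}\partial^\rho$ with $\rho\in T^d$; both types are controlled in $L^\infty_\varepsilon$ by $\mathcal K^\varepsilon_\gamma$, $\mathcal K^\varepsilon_g$ through \eqref{g-gamma}.

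The key step is then the estimate of a single Leibniz term. I would bound $(1+|x|)^{d+\varepsilon/2}\le(1+|t|)^{d+\varepsilon/2}(1+|t-x|)^{d+\varepsilon/2}$ and attach this weight to the factor carrying the \emph{non}-pure derivative, using only its bare $L^\infty_\varepsilon$-bound; this cancels the corresponding weight down to $(1+|t-x|)^{-\varepsilon/2}\le1$ (or $(1+|t|)^{-\varepsilon/2}\le1$), while the residual weight $(1+\cdot)^{d+\varepsilon/2}\le(1+\cdot)^{d+\varepsilon}$ remains on the pure $T^d$-factor. Applying Lemma \ref{lemLinf-eps} to that pure $T^d$-factor — whose hypotheses are precisely the conditions $x_jx^\beta\partial^\alpha(\cdot)\in L^\infty_\varepsilon$ in \eqref{g-gamma}, with $\mathcal H^\varepsilon$-constant dominated by $\mathcal K^\varepsilon_\gamma$ resp. $\mathcal K^\varepsilon_g$ — and invoking \eqref{Linf-eps:3.1} yields
\[
(1+|x|)^{d+\varepsilon/2}\int|\partial^{\alpha'}\gamma(t)|\,|(\partial^{\alpha''}g)(t-x)|\,dt\le d^d2^{2d+1}\,\mathcal K^\varepsilon_\gamma\mathcal K^\varepsilon_g\,.
\]
Summing over the Leibniz terms with the weights $(2\pi)^{-|\alpha|}$ and $(2\pi)^{-|\alpha|-1}$, the multiplicity counts combine the factors $2$ of integration-by-parts into $\pi^{-|\alpha|}$ and $\pi^{-|\alpha|-1}$, and the three groups add up to $(1+1/\pi)^{d-1}\bigl(1+\tfrac2\pi+\tfrac1{\pi^2}\bigr)=(1+1/\pi)^{d+1}$, reproducing exactly the constant $2^{2d+1}d^d(1+1/\pi)^{d+1}$ in \eqref{Linf-eps_eqt:2}.

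The main obstacle is the correct allocation in this key step: because the extra $t_k$-derivative needed for the $(1+|\omega_k|)^2$ factor may, through Leibniz, land on either window, one must apply Lemma \ref{lemLinf-eps} to whichever factor stays a genuine $T^d$-derivative and reserve the bare $L^\infty_\varepsilon$-bound for the other. This is exactly why the hypotheses \eqref{g-gamma} are imposed symmetrically — with both the polynomial-weight conditions $x_jx^\beta\partial^\alpha$ and the second-order conditions $x^\beta\partial_{x_j}\partial^\alpha$ — on both $\gamma$ and $g$. The only remaining care is the bookkeeping of constants so as to land precisely on $(1+1/\pi)^{d+1}$.
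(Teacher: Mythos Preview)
Your approach is essentially the paper's: write $V_g\gamma(x,\cdot)=\mathcal F(\gamma\cdot T_x\bar g)$, convert the $\omega$-weight into derivatives via \eqref{prod_1}, expand by Leibniz, and bound each term by combining a bare $L^\infty_\varepsilon$ estimate on one factor with the weighted $L^1$ bound \eqref{Linf-eps:3.1} from Lemma \ref{lemLinf-eps} on the other (swapping roles when the extra $\partial_{t_k}$ lands on $g$). The paper organizes the third group slightly differently---it applies the product rule for the single extra $\partial_{t_k}$ first and then Leibniz for $\partial^\alpha$, which makes the role-swap automatic---and in fact carries the full weight $(1+|x|)^{d+\varepsilon}$ rather than $(1+|x|)^{d+\varepsilon/2}$, but the mechanism and the constant bookkeeping leading to $(1+1/\pi)^{d+1}$ are identical.
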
 
\begin{proof}
	Applying Lemma \ref{lemLinf-eps} and the trivial inequality
	\[
	1+\vert x\vert\le (1+\vert t\vert)(1+\vert t-x\vert)\,,
	\] 
	we get
	\begin{equation}\label{Vg1}
		\begin{split}
			(1+\vert x\vert)^{d+\varepsilon}\vert V_g\gamma(x,\omega)\vert\le\int_{\mathbb R^d}(1+\vert t\vert)^{d+\varepsilon}\vert\gamma(t)\vert(1+\vert t-x\vert)^{d+\varepsilon}\vert g(t-x)\vert dt\\
			\le\Vert\gamma\Vert_{L^\infty_{\varepsilon}}\int_{\mathbb R^d}(1+\vert y\vert)^{d+\varepsilon}\vert g(y)\vert dy\le 2^{2d+1}d^d\Vert\gamma\Vert_{L^\infty_{\varepsilon}}\mathcal H^\varepsilon_g 
		\end{split}
	\end{equation}
	Now let us observe that for given $(x,\omega)\in\mathbb R^{2d}$, $V_g\gamma(x,\omega)$ is nothing but the Fourier transform of $\gamma\cdot T_x\overline g$ computed at $\omega$, that is
	$
	V_g\gamma(x,\omega)=\mathcal F\left(\gamma\cdot T_x\overline g\right)(\omega)\,,
$
	see \eqref{stft}. Then for every $\alpha\in T^d$ the identity
	\begin{equation}\label{Vg:id1}
		\omega^\alpha V_g\gamma(x,\omega)=\left(\frac{i}{2\pi}\right)^{\vert\alpha\vert}\mathcal F\left(\partial^\alpha(\gamma\cdot T_x\overline g)\right)(\omega)
	\end{equation} 
	holds true at least in $\mathcal S^\prime(\mathbb R^d_\omega)$ and point-wise in $x\in\mathbb R^d$. Then  by Leibniz's formula applied to the product of the $L^2$-functions $\gamma$ and $T_x\overline g$, whose derivatives $\partial^\alpha\gamma, \partial^\alpha T_x\overline g\equiv T_x\overline{\partial^\alpha g}$ still belong to $L^2(\mathbb R^d)$, we get 
	\begin{equation}\label{Vg:id2}
		\begin{split}
			\omega^\alpha V_g\gamma(x,\omega)&=\left(\frac{i}{2\pi}\right)^{\vert\alpha\vert}\sum\limits_{\beta\le\alpha}\binom{\alpha}{\beta}\mathcal F\left(\gamma^{(\beta)}\cdot T_x\overline{g^{(\alpha-\beta)}}\right)(\omega)\\
			&=\left(\frac{i}{2\pi}\right)^{\vert\alpha\vert}\sum\limits_{\beta\le\alpha}\binom{\alpha}{\beta}V_{g^{(\beta)}}\gamma^{(\alpha-\beta)}(x,\omega)\,.
		\end{split}
	\end{equation}
Here we used the shortcut $h^{(\nu)}:=\partial^{\nu}h$ to write the partial derivatives of $\gamma$ and $g$ of order $\vert\nu\vert$ for any $\nu\in T^d$. 
	\newline
	By \eqref{prod} and repeating the same arguments used to get estimate \eqref{Vg1} with $g^{(\beta)}$ and $\gamma^{(\alpha-\beta)}$ instead of $g$ and $\gamma$ 
we have
	\begin{equation}\label{stima_int}
		\begin{split}
			&(1+\vert x\vert)^{d+\varepsilon}\prod_{j=1}^d(1+\vert\omega_j\vert)\vert V_g\gamma(x,\omega)\vert\\
			&\le\sum\limits_{\alpha\in T^d}\left(\frac{1}{2\pi}\right)^{\vert\alpha\vert}\sum\limits_{\beta\le\alpha}\binom{\alpha}{\beta}(1+\vert x\vert)^{d+\varepsilon}\vert V_{g^{(\beta)}}\gamma^{(\alpha-\beta)}(x,\omega)\vert\\
			&\le 2^{2d+1}d^d \sum\limits_{\alpha\in T^d}\left(\frac{1}{2\pi}\right)^{\vert\alpha\vert}\sum\limits_{\beta\le\alpha}\binom{\alpha}{\beta}\Vert\gamma^{(\beta)}\Vert_{L^\infty_{\varepsilon}} \mathcal H^\varepsilon_{g^{(\alpha-\beta)}}\\
			&\le 2^{2d+1}d^d\mathcal K^\varepsilon_\gamma \mathcal K^\varepsilon_{g}\sum\limits_{\alpha\in T^d}\left(\frac{1}{2\pi}\right)^{\vert\alpha\vert}2^{\vert\alpha\vert}=2^{2d+1}d^d\mathcal K^\varepsilon_\gamma \mathcal K^\varepsilon_{g}\sum\limits_{\alpha\in T^d}\left(\frac{1}{\pi}\right)^{\vert\alpha\vert}\,.
		\end{split}
	\end{equation}
	Eventually, we notice that the number of multi-indices of $T^d$ which have exactly $k$ components equal to one is $\binom{d}{k}$, hence
	\begin{equation}\label{binom1}
		\sum\limits_{\alpha\in T^d}\left(\frac{1}{\pi}\right)^{\vert\alpha\vert}=\sum_{k=0}^d\binom{d}{k}\left(\frac1{\pi}\right)^k=\left(1+\frac1{\pi}\right)^d
	\end{equation}
	and
	\begin{equation}\label{Linf-eps_eq:4}
		(1+\vert x\vert)^{d+\varepsilon}\prod_{j=1}^d(1+\vert\omega_j\vert)\vert V_g\gamma(x,\omega)\vert\le 2^{2d+1}d^d\left(1+\frac1{\pi}\right)^d\mathcal K^\varepsilon_\gamma \mathcal K^\varepsilon_{g}\,.
	\end{equation}
	In order to get the estimate \eqref{Linf-eps_eqt:2}, set $k=1$, without loss of generality, and apply \eqref{prod_1} to $\omega$. 
Then the estimate of $(1+\vert\omega_1\vert)^2\prod_{j\neq 1}(1+\vert\omega_j\vert)\vert V_g\gamma(x,\omega)\vert$ is reduced to the  evaluation of
	\[
	\sum\limits_{\substack{\alpha\in T^d\\ \alpha_1=0}}\vert\omega^\alpha\vert\vert V_g\gamma(x,\omega)\vert\quad\mbox{and}\quad\sum\limits_{\substack{\alpha\in T^d\\ \alpha_1=1}}\vert\omega^\alpha\vert\vert V_g\gamma(x,\omega)\vert\,,
	\]
	which thanks again to \eqref{prod} are already included within $\prod_{j=1}^d(1+\vert\omega_j\vert)\vert V_g\gamma(x,\omega)\vert$ estimated by \eqref{Linf-eps_eq:4}, and 
\begin{equation}
\sum\limits_{\substack{\alpha\in T^d\\ \alpha_1=1}}\vert\omega^\alpha\vert\,\vert\omega_1\vert\vert V_g\gamma(x,\omega)\vert.
\end{equation} 
Here the multiplier of $\vert V_g\gamma(x,\omega)\vert$, under summation over multi-indices $\alpha\in T^d$ such that $\alpha_1=1$, includes an additional factor $\vert\omega_1\vert$, namely $\vert\omega_1\vert$ is involved in $\vert\omega^\alpha\vert\vert\omega_1\vert$ with the power $2$ (as $\alpha_1=1$).
	\newline
	Arguing similarly as we did before to estimate $\vert \omega^\alpha V_g\gamma(x,\omega)\vert$ for any $\alpha\in T^d$, the additional multiplication by $\omega_1$ turns into a further differentiation of $\gamma(t)\cdot T_x\overline g(t)$ with respect to $t_1$ under Fourier transform, see \eqref{Vg:id1}, hence
	\[
	\begin{split}
		\omega_1\omega^\alpha V_g\gamma(x,\omega)&=\left(\frac{i}{2\pi}\right)^{\vert\alpha\vert+1}\mathcal F\left(\partial^\alpha\partial_{1}(\gamma\cdot T_x\overline g)\right)(\omega)\\
		&=\left(\frac{i}{2\pi}\right)^{\vert\alpha\vert+1}\left\{\left[\mathcal F\left(\partial^\alpha(\partial_1\gamma\cdot T_x\overline g)\right)+\mathcal F\left(\partial^\alpha(\gamma\cdot T_x\overline{\partial_1 g})\right)\right](\omega)\right\}\,.
	\end{split}
	\]
	Now we argue on each of the two terms in the right-hand side above as done for $\mathcal F\left(\partial^\alpha(\gamma\cdot T_x\overline g)\right)(\omega)$, with either $\partial_1\gamma$ and $g$ or $\partial_1 g$ and $\gamma$ replacing $\gamma$ and $g$ respectively. Note that in view of \eqref{Linf-eps:2} both $\partial_1\gamma$ and $\partial_1 g$ obey the same regularity constraints as $\gamma$ and $g$ in order have \eqref{stima_int}. In particular, the reason why $\gamma$, and not $\partial_1 g$, plays the role of $g$ in the estimate of $\mathcal F\left(\partial^\alpha(\gamma\cdot T_x\overline{\partial_1 g})\right)(\omega)$ by the same arguments leading to \eqref{stima_int} is because, in view of \eqref{g-gamma} and Lemma \ref{lemLinf-eps}, $(1+\vert\cdot\vert)^{d+\varepsilon}\partial^\alpha\gamma\in L^1(\mathbb R^d)$ whenever $\alpha\in T^d$; the same could not be true for $\partial_1 g$ without adding to \eqref{g-gamma} the further regularity $x_k x^\beta\partial_j\partial^\alpha g\in L^\infty_{\varepsilon}(\mathbb R^d)$ for all $\alpha, \beta\in T^d$ and integers $1\le k,j\le d$. We then  get
	\[
	\begin{split}
		&\sum\limits_{\substack{\alpha\in T^d\\ \alpha_1=1}}\vert\omega_1\vert\vert\omega^\alpha\vert \vert V_g\gamma(x,\omega)\vert\\
		&\le\sum\limits_{\substack{\alpha\in T^d\\ \alpha_1=1}}\left(\frac{1}{2\pi}\right)^{\vert\alpha\vert+1}\left\{\vert\mathcal F\left(\partial^\alpha(\partial_1\gamma\cdot T_x\overline g)\right)(\omega)\vert+\vert\mathcal F\left(\partial^\alpha(\gamma\cdot T_x\overline{\partial_1 g})\right)(\omega)\vert\right\}\\
		&\le 2^{2d+1}d^d\sum\limits_{\substack{\alpha\in T^d\\ \alpha_1=1}}\left(\frac{1}{2\pi}\right)^{\vert\alpha\vert+1}\!\!\!\!\sum\limits_{\beta\le\alpha}\binom{\alpha}{\beta}\left[\Vert\partial_1\gamma^{(\beta)}\Vert_{L^\infty_{\varepsilon}} \mathcal H^\varepsilon_{g^{(\alpha-\beta)}}+\Vert\partial_1 g^{(\beta)}\Vert_{L^\infty_{\varepsilon}} \mathcal H^\varepsilon_{\gamma^{(\alpha-\beta)}}\right]\\
		&\le 2^{2d+1}d^d\sum\limits_{\substack{\alpha\in T^d\\ \alpha_1=1}}\left(\frac{1}{2\pi}\right)^{\vert\alpha\vert+1}2^{\vert\alpha\vert+1}\mathcal K^\varepsilon_\gamma\mathcal K^\varepsilon_g=2^{2d+1}d^d\sum\limits_{\substack{\alpha\in T^d\\ \alpha_1=1}}\left(\frac{1}{\pi}\right)^{\vert\alpha\vert+1}\mathcal K^\varepsilon_\gamma\mathcal K^\varepsilon_g
	\end{split}
	\]
	and analogously to \eqref{binom1},
	\[
	\sum\limits_{\substack{\alpha\in T^d\\ \alpha_1=1}}\left(\frac{1}{\pi}\right)^{\vert\alpha\vert+1}=\left(\frac1{\pi}\right)^2\sum\limits_{\substack{\alpha\in T^d\\ \alpha_1=1}}\left(\frac1{\pi}\right)^{\vert\alpha\vert-1}\!\!=\left(\frac1{\pi}\right)^2\left(1+\frac1{\pi}\right)^{d-1}\,.
	\]
	Thus we obtain
	\begin{equation}\label{Linf-eps_eqt:5}
		\sum\limits_{\substack{\alpha\in T^d\\ \alpha_1=1}}\vert\omega_1\vert\vert\omega^\alpha\vert \vert V_g\gamma(x,\omega)\vert
		\le 2^{2d+1}d^d\frac1{\pi^2}\left(1+\frac1{\pi}\right)^{d-1}\mathcal K^\varepsilon_\gamma\mathcal K^\varepsilon_g\,.
	\end{equation}
	Gathering the preceding estimates we end up with
	\begin{equation}\label{Linf-eps_eqt:6}
		\begin{split}
			&(1+ \vert x\vert)^{d+\varepsilon}(1+\vert\omega_1\vert)^2\prod_{j\neq 1}(1+\vert\omega_j\vert)\vert V_g\gamma(x,\omega)\vert\\
			&\le 2^{2d+1}d^d \left(\sum\limits_{\substack{\alpha\in T^d\\ \alpha_1=0}}\left(\frac{1}{\pi}\right)^{\vert\alpha\vert}+2\sum\limits_{\substack{\alpha\in T^d\\ \alpha_1=1}}\left(\frac{1}{\pi}\right)^{\vert\alpha\vert}+\frac1{\pi}\sum\limits_{\substack{\alpha\in T^d\\ \alpha_1=1}}\left(\frac{1}{\pi}\right)^{\vert\alpha\vert}\!\right)\mathcal K^\varepsilon_\gamma\mathcal K^\varepsilon_g
		\end{split}
	\end{equation}
	On the other hand
	\[
	\begin{split}
		\sum\limits_{\substack{\alpha\in T^d\\ \alpha_1=0}}\left(\frac{1}{\pi}\right)^{\vert\alpha\vert}&+2\sum\limits_{\substack{\alpha\in T^d\\ \alpha_1=1}}\left(\frac{1}{\pi}\right)^{\vert\alpha\vert}+\frac1{\pi}\sum\limits_{\substack{\alpha\in T^d\\ \alpha_1=1}}\left(\frac{1}{\pi}\right)^{\vert\alpha\vert}\\
		&=\sum\limits_{\substack{\alpha\in T^d\\ \alpha_1=0}}\left(\frac{1}{\pi}\right)^{\vert\alpha\vert}+\left(2+\frac1{\pi}\right)\sum\limits_{\substack{\alpha\in T^d\\ \alpha_1=1}}\left(\frac{1}{\pi}\right)^{\vert\alpha\vert}\\
		&=\sum\limits_{\substack{\alpha\in T^d\\ \alpha_1=0}}\left(\frac{1}{\pi}\right)^{\vert\alpha\vert}+\left(2+\frac1{\pi}\right)\frac1{\pi}\sum\limits_{\substack{\alpha\in T^d\\ \alpha_1=1}}\left(\frac{1}{\pi}\right)^{\vert\alpha\vert-1}
	\end{split}
	\]
	and, similar to \eqref{binom1},
	\[
	\sum\limits_{\substack{\alpha\in T^d\\ \alpha_1=0}}\left(\frac{1}{\pi}\right)^{\vert\alpha\vert}=\sum\limits_{\substack{\alpha\in T^d\\ \alpha_1=1}}\left(\frac{1}{\pi}\right)^{\vert\alpha\vert -1}=\left(1+\frac1{\pi}\right)^{d-1}\,,
	\]
	hence
	\[
	\begin{split}
		\sum\limits_{\substack{\alpha\in T^d\\ \alpha_1=0}}&\left(\frac{1}{\pi}\right)^{\vert\alpha\vert}+\left(2+\frac1{\pi}\right)\frac1{\pi}\sum\limits_{\substack{\alpha\in T^d\\ \alpha_1=1}}\left(\frac{1}{\pi}\right)^{\vert\alpha\vert-1}\\
		&=\left(1+\frac1{\pi}\right)^{d-1}\left(1+\left(2+\frac1{\pi}\right)\frac1{\pi}\right)\\
		&=\left(1+\frac1{\pi}\right)^{d-1}\left(1+\frac{2}{\pi}+\frac1{\pi^2}\right)= \left(1+\frac1{\pi}\right)^{d+1}.
	\end{split}
	\]
	Inserting the last into \eqref{Linf-eps_eqt:6} eventually gives
	\begin{equation*}
		\begin{split}
			(1+ &\vert x\vert)^{d+\varepsilon}(1+\vert\omega_1\vert)^2\prod_{j\neq 1}(1+\vert\omega_j\vert)\vert V_g\gamma(x,\omega)\vert\le 2^{2d+1}d^d\left(1+\frac1{\pi}\right)^{d+1}\mathcal K^\varepsilon_\gamma\mathcal K^\varepsilon_g\,,
		\end{split}
	\end{equation*}
	that is \eqref{Linf-eps_eqt:2} for $k=1$.
\end{proof}
\begin{remark}\label{Vggamma_rmk}
{\rm As regards to the estimates \eqref{Linf-eps_eqt:2}, as well as \eqref{Linf-eps_eq:4}, the presence of the weight $(1+\vert x\vert)^{d+\varepsilon}$ in the left-hand side yields a sufficiently fast decay, as $\vert x\vert$ goes to infinity, to make the function $V_g\gamma(x,\omega)$ integrable with respect to $x$. The reason why the estimates \eqref{Linf-eps_eq:4} are not enough to our future purposes is that the weight $\prod\limits_{j=1}^d(1+\vert\omega_j\vert)$ in the left-hand side of \eqref{Linf-eps_eq:4} does not yield sufficiently fast decay, as $\vert\omega\vert$ goes to infinity, to provide $\omega$-integrability of $V_g\gamma(x,\omega)$. Instead, as we already saw in the proof of preceding Lemma \ref{lemLinf-eps} the presence of one factor $(1+\vert\omega_k\vert)$ with power two in the $\omega$-weight appearing in the left-hand side of \eqref{Linf-eps_eqt:2} above allows recovering the $\omega$-integrability of $V_g\gamma(x,\omega)$, so as to make the latter to be an integrable function in $\mathbb R^d_x\times\mathbb R^d_\omega$.
\newline
Repeating all the arguments used in the proof of Lemma \ref{lemLinf-eps} in order to pass from estimate \eqref{Linf-eps:2} to estimate \eqref{Linf-eps:2.1}, we manage to derive from \eqref{Linf-eps_eqt:2}
\begin{equation}\label{Linf-eps_eqt:8}
	\vert V_g\gamma(x,\omega)\vert\le C_d\mathcal K^\varepsilon_\gamma\mathcal K^\varepsilon_g(1+\vert x\vert)^{-d-\varepsilon}\prod\limits_{j=1}^d(1+\vert\omega_j\vert)^{-1-\frac{1}{d}}\,,
\end{equation}
where
\begin{equation}\label{cst}
	C_d:=2^{2d+1}d^d\left(1+\frac1{\pi}\right)^{d+1}\,,
\end{equation}
which provides the wished integrability of $V_g\gamma(x,\omega)$ in $\mathbb R^d_x\times\mathbb R^d_\omega$.

\medskip
\noindent
By the calculations above, we have shown that both functions $g$ and $\gamma$, obeying the assumptions of Proposition \ref{propLinf-eps}, actually belong  to $M^1(\mathbb R^d)$, for the function $g$ above (playing the role of window in the Gabor transform $V_g\gamma$) could be replaced by any function $\varphi\in\mathcal S(\mathbb R^d)$ and the role of $\gamma$ and $g$ in the Gabor transform could be interchanged through the formula $V_\gamma g(x, \omega)=e^{-2\pi i\omega\cdot x}\overline{V_g \gamma(-x,-\omega)}$ (recall that both $g$ and $\gamma$ belong to $L^2(\mathbb R^d)$, cf Remark \ref{rmkLinf-eps}). Therefore, in principle the invertibility of the generalized Gabor operator $S^{\alpha, \beta}_{g,\gamma}$, correponding to any diagonal matrix $L=diag(\alpha_1,\dots,\alpha_d,\beta_1,\dots,\beta_d)$ with positive numbers $\alpha_j, \beta_j$'s, could be reducted to the framework of Corollary \ref{InvGf_cor:1}.  
\newline
However, by making use of the decay estimate \eqref{Linf-eps_eqt:8} of $V_g\gamma(x,\omega)$, we wish to provide an explicit, possibly computable, upper bound of the sizes $\alpha_j$, $\beta_j$'s of the lattice $\alpha\mathbb Z^d\times\beta\mathbb Z^d$, ensuring the invertibility of the related Gabor operator $S^{\alpha, \beta}_{g,\gamma}$. In view of Proposition \ref{InvGf_prop:1}, this consists in providing an explicit upper bound for the sum of the series \eqref{Vggamma series}. }
\end{remark}
A drawback of estimate \eqref{Linf-eps_eqt:8} is the different size and type of decay at infinity of $V_g\gamma(x,\omega)$, with respect to $x$ and to $\omega$.The decay at infinity in $x$ is of \lq\lq homogeneous\rq\rq type, being measured with respect to $\vert x\vert$, with size $d+\varepsilon$, whereas the decay in $\omega$ is measured \lq\lq separately\rq\rq in each coordinate direction $\omega_j$, with size $1+1/d$.
\newline
The lack of symmetry of \eqref{Linf-eps_eqt:8} could be somehow compensated if we assume some additional regularity to the functions $\gamma$ and $g$. The starting point is noticing that whenever $\gamma, g\in L^2(\mathbb R^d)$, from Plancherel's formula and the use of \eqref{SCAMBIO}-\eqref{MO2}, we get (see \cite[Lemma 3.1.1]{GRO1})
\begin{equation*}\label{Gabor Fourier}
	\begin{split}
		V_g\gamma(x,\omega)&=(\gamma,M_\omega T_x g)_{L^2}=(\hat\gamma, \widehat{M_\omega T_x g})_{L^2}=(\hat\gamma, T_\omega M_{-x}\hat g)_{L^2}\\
		&=e^{-2\pi i\omega\cdot x}(\hat\gamma, M_{-x} T_{\omega}\hat g)_{L^2}=e^{-2\pi i\omega\cdot x}V_{\hat g}\hat\gamma(\omega, -x)\,,
	\end{split}
\end{equation*} 
thus in particular
\begin{equation}\label{modulo Gabor tr}
	\vert V_g\gamma(x,\omega)\vert=\vert V_{\hat g}\hat\gamma(\omega, -x)\vert\,.
\end{equation}
The above suggests that if we assume, in addition to \eqref{g-gamma} on $\gamma$ and $g$, that the Fourier transforms of $\gamma$ and $g$ satisfy the same of \eqref{g-gamma}, i.e.
\begin{equation}\label{hat g-gamma}
	\begin{split}
		&\omega^\beta\partial^\alpha \hat\gamma, \omega^\beta\partial_{\omega_j}\partial^\alpha \hat\gamma, \omega_j\omega^\beta\partial^\alpha \hat\gamma\in L^\infty_{\varepsilon}(\mathbb R^d)\,,\\
		&\omega^\beta\partial^\alpha \hat g, \omega^\beta\partial_{\omega_j}\partial^\alpha \hat g, \omega_j\omega^\beta\partial^\alpha \hat g\in L^\infty_{\varepsilon}(\mathbb R^d)\,,
	\end{split}
\end{equation}
then, in view of \eqref{modulo Gabor tr}, the Gabor transform $V_g\gamma(x,\omega)$ will obey the same decay estimates as \eqref{Linf-eps_eqt:8}, where the role of $x$ and $\omega$ is exchanged.
\newline
More precisely, from Proposition \ref{propLinf-eps} we can deduce the following result.
\begin{corollary}
For any $\varepsilon>0$, let $\gamma, g\in\mathcal S^\prime(\mathbb R^d)$ satisfy the assumptions \eqref{g-gamma} and \eqref{hat g-gamma}. Then  
\begin{equation}\label{Linf-eps_eqt:8 sym}
	\begin{split}
	\vert V_g\gamma(x,\omega)\vert\le C_d\mathcal K^{\varepsilon}_{\gamma,\hat\gamma}\mathcal K^{\varepsilon}_{g,\hat g} &(1+\vert x\vert)^{-\frac{d}{2}-\frac{\varepsilon}{2}}(1+\vert \omega\vert)^{-\frac{d}{2}-\frac{\varepsilon}{2}}\\
	&\times \prod\limits_{j=1}^d(1+\vert x_j\vert)^{-\frac12-\frac{1}{2d}}(1+\vert\omega_j\vert)^{-\frac12-\frac{1}{2d}}\,,
	\end{split}
\end{equation}
where $C_d$ is the constant defined in \eqref{cst},
\begin{equation}\label{L-inf-eps_sym eqt}
	\begin{split}
		\mathcal K^{\varepsilon}_{\gamma,\hat\gamma}:=\left(\mathcal K^\varepsilon_{\gamma}\right)^{\frac12}\left(\mathcal K^\varepsilon_{\hat\gamma}\right)^{\frac12}\quad\mbox{and}\quad\mathcal K^{\varepsilon}_{g,\hat g}:=\left(\mathcal K^\varepsilon_{g}\right)^{\frac12}\left(\mathcal K^\varepsilon_{\hat g}\right)^{\frac12}\,,
	\end{split}
\end{equation}  
$\mathcal K^\varepsilon_{\gamma}$ and $\mathcal K^\varepsilon_{g}$ are the constants in \eqref{Linf-eps_eqt:3} and $\mathcal K^\varepsilon_{\hat\gamma}$, $\mathcal K^\varepsilon_{\hat g}$ the same constants with $\hat\gamma$, $\hat g$ instead of $\gamma$, $g$.
\end{corollary}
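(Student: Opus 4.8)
The plan is to deduce \eqref{Linf-eps_eqt:8 sym} from the one-sided decay estimate \eqref{Linf-eps_eqt:8}, already obtained in the Remark following Proposition \ref{propLinf-eps}, by applying it twice and then interpolating the two outcomes through a geometric mean. The crucial observation is that the hypotheses \eqref{hat g-gamma} are precisely the hypotheses \eqref{g-gamma} rewritten for the Fourier transforms $\hat\gamma$, $\hat g$ in place of $\gamma$, $g$; therefore Proposition \ref{propLinf-eps}, and in particular its consequence \eqref{Linf-eps_eqt:8}, applies verbatim to the pair $(\hat\gamma,\hat g)$ as well. The symmetry identity \eqref{modulo Gabor tr} then lets us transfer the estimate for $V_{\hat g}\hat\gamma$ back to $V_g\gamma$, with the roles of $x$ and $\omega$ exchanged.

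Concretely, I would first record the bound furnished directly by \eqref{Linf-eps_eqt:8} under hypothesis \eqref{g-gamma},
\[
\vert V_g\gamma(x,\omega)\vert\le C_d\,\mathcal K^\varepsilon_\gamma\mathcal K^\varepsilon_g\,(1+\vert x\vert)^{-d-\varepsilon}\prod\limits_{j=1}^d(1+\vert\omega_j\vert)^{-1-\frac1d}\,.
\]
Applying \eqref{Linf-eps_eqt:8} to $(\hat\gamma,\hat g)$, which is legitimate by \eqref{hat g-gamma}, yields
\[
\vert V_{\hat g}\hat\gamma(x,\omega)\vert\le C_d\,\mathcal K^\varepsilon_{\hat\gamma}\mathcal K^\varepsilon_{\hat g}\,(1+\vert x\vert)^{-d-\varepsilon}\prod\limits_{j=1}^d(1+\vert\omega_j\vert)^{-1-\frac1d}\,.
\]
Evaluating this last inequality at the point $(\omega,-x)$ and invoking \eqref{modulo Gabor tr}, while noting that each weight depends only on the absolute values $\vert x_j\vert$, $\vert\omega_j\vert$, $\vert x\vert$, $\vert\omega\vert$ (so that $\vert -x_j\vert=\vert x_j\vert$), I obtain the dual bound
\[
\vert V_g\gamma(x,\omega)\vert\le C_d\,\mathcal K^\varepsilon_{\hat\gamma}\mathcal K^\varepsilon_{\hat g}\,(1+\vert\omega\vert)^{-d-\varepsilon}\prod\limits_{j=1}^d(1+\vert x_j\vert)^{-1-\frac1d}\,.
\]

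Finally, multiplying the two displayed upper bounds for $\vert V_g\gamma(x,\omega)\vert$ and extracting the square root (the geometric mean of two majorants is again a majorant) produces exactly \eqref{Linf-eps_eqt:8 sym}: the prefactor is $\sqrt{C_d^2}=C_d$; the factors $\sqrt{\mathcal K^\varepsilon_\gamma\mathcal K^\varepsilon_{\hat\gamma}}$ and $\sqrt{\mathcal K^\varepsilon_g\mathcal K^\varepsilon_{\hat g}}$ collapse to $\mathcal K^{\varepsilon}_{\gamma,\hat\gamma}$ and $\mathcal K^{\varepsilon}_{g,\hat g}$ by definition \eqref{L-inf-eps_sym eqt}; the homogeneous factors combine into $(1+\vert x\vert)^{-\frac{d}{2}-\frac{\varepsilon}{2}}(1+\vert\omega\vert)^{-\frac{d}{2}-\frac{\varepsilon}{2}}$; and the coordinate-wise factors become $\prod_{j=1}^d(1+\vert x_j\vert)^{-\frac12-\frac{1}{2d}}(1+\vert\omega_j\vert)^{-\frac12-\frac{1}{2d}}$. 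I do not expect any genuine obstacle in this argument: the entire analytic substance resides in Proposition \ref{propLinf-eps}, and what remains is the bookkeeping of the constants and the elementary symmetrization afforded by the Fourier exchange \eqref{modulo Gabor tr}. The only point requiring mild attention is the evenness of the weights under $x\mapsto -x$ together with the correct tracking of which variable carries the homogeneous decay and which carries the separated decay after evaluating at $(\omega,-x)$, both of which are immediate.
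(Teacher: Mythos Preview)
Your proposal is correct and follows essentially the same approach as the paper: the paper likewise derives the dual estimate with the roles of $x$ and $\omega$ exchanged via \eqref{modulo Gabor tr} and the assumptions \eqref{hat g-gamma}, then combines it with \eqref{Linf-eps_eqt:8} (the paper says ``convex combination,'' which amounts to your geometric mean). Your write-up is in fact a bit more explicit about the mechanism than the paper's own proof.
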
 
\begin{proof}
Starting from \eqref{modulo Gabor tr}, the same arguments used above lead to the following counterpart of \eqref{Linf-eps_eqt:8}
\begin{equation}\label{Linf-eps_eqt:9 hat}
		\vert V_g\gamma(x,\omega)\vert\le C_d\mathcal K^\varepsilon_\gamma\mathcal K^\varepsilon_g(1+\vert \omega\vert)^{-d-\varepsilon}\prod\limits_{j=1}^d(1+\vert x_j\vert)^{-1-\frac{1}{d}}\,,
\end{equation}
where $\mathcal K_{\hat\gamma}^\varepsilon$, $\mathcal K^\varepsilon_{\hat g}$ are defined as in \eqref{Linf-eps_eqt:3} for $\hat\gamma$ and $\hat g$ and $C_d$ is the constant in \eqref{cst}. Then a convex combination of \eqref{Linf-eps_eqt:8} and \eqref{Linf-eps_eqt:9 hat} gives raise to the estimate in \eqref{Linf-eps_eqt:8 sym}.
\end{proof}
\subsection {Existence of Gabor frames} \label{GFEX}
Although the different decay  of $V_g\gamma(x,\omega)$ shown in \eqref{Linf-eps_eqt:8 sym} does not prevent integrability in both $x$ and $\omega$, considering a single decay with respect to the whole variable $(x,\omega)$ will simplify our subsequent calculations. Therefore, we \lq\lq reduce\rq\rq the decay associated to the homogeneous weights $1+\vert x\vert$, $1+\vert\omega\vert$ to the somewhat \lq\lq worser\rq\rq decay corresponding to the weights $\prod\limits_{j=1}^d(1+\vert x_j\vert)$, $\prod\limits_{j=1}^d(1+\vert \omega_j\vert)$. This can be done by the trivial estimate  
\begin{equation}\label{trivial}
	(1+\vert x\vert)^{d+\varepsilon}=\underbrace{(1+\vert x\vert)^{1+\varepsilon/d}\dots(1+\vert x\vert)^{1+\varepsilon/d}}_{d\,\,{\rm times}}\ge\prod\limits_{j=1}^d(1+\vert x_j\vert)^{1+\varepsilon/d}\,.
\end{equation}
Then from \eqref{Linf-eps_eqt:8 sym} we deduce at once
\begin{equation}\label{Linf-eps_eqt:9 hat1}
	\vert V_g\gamma(x,\omega)\vert\le C_d\mathcal K^{\varepsilon}_{\gamma,\hat\gamma}\mathcal K^{\varepsilon}_{g,\hat g}\prod\limits_{j=1}^d(1+\vert x_j\vert)^{-1-\frac{1+\varepsilon}{2d}}(1+\vert\omega_j\vert)^{-1-\frac{1+\varepsilon}{2d}}\,,
\end{equation}
Using \eqref{Linf-eps_eqt:9 hat1} to estimate each of the summands of the series \eqref{Vggamma series}, we get
\begin{equation}\label{Vg sym est 0}
	\begin{split}
		\sum\limits_{(h,k)\in\mathbb Z^{2d}_0}&\left\vert V_g\gamma\left(-\frac{k}{\beta},\frac{h}{\alpha}\right)\right\vert\\
		&\le C_d\mathcal K^\varepsilon_{\gamma, \hat\gamma}\mathcal K^\varepsilon_{g, \hat g}\sum\limits_{(h,k)\in\mathbb Z^{2d}_0}\prod\limits_{j=1}^d\left(1+\left\vert \frac{h_j}{\alpha_j}\right\vert\right)^{-1-\frac{1+\varepsilon}{2d}}\left(1+\left\vert\frac{k_j}{\beta_j}\right\vert\right)^{-1-\frac{1+\varepsilon}{2d}}.
	\end{split}
\end{equation}
In order to go on with an estimate of right-hand side above, it is worth to rearrange the terms in the multi-dimensional sum over $(h,k)\in\mathbb Z^{2d}_0$ in such a way to be reduced to estimate several one-dimensional elementary sums. Let us make a reordering of the summands in $(h,k)$ according to the increasing number of nonzero components of $(h,k)$, that is
\begin{equation}\label{Z split}
	\sum\limits_{(h,k)\in\mathbb Z^{2d}_0}\prod\limits_{j=1}^d\left(1+\left\vert \frac{h_j}{\alpha_j}\right\vert\right)^{-1-\frac{1+\varepsilon}{2d}}\left(1+\left\vert\frac{k_j}{\beta_j}\right\vert\right)^{-1-\frac{1+\varepsilon}{2d}}=\sum\limits_{\ell=1}^{2d}\sum\limits_{(h,k)\in\mathbb Z^{2d}_{(\ell)}}a_{(h,k)}\,.
\end{equation}
In the right-hand side above $a_{(h,k)}$ stands for the general term of the sum over $(h,k)$ in the left-hand side, that is
\begin{equation*}\label{a(hk)}
	a_{(h,k)}:=\prod\limits_{j=1}^d\left(1+\left\vert \frac{h_j}{\alpha_j}\right\vert\right)^{-1-\frac{1+\varepsilon}{2d}}\left(1+\left\vert\frac{k_j}{\beta_j}\right\vert\right)^{-1-\frac{1+\varepsilon}{2d}}\,,
\end{equation*}
and for any integer number $1\le\ell\le 2d$, $\mathbb Z^{2d}_{(\ell)}$ denotes the subset of $\mathbb Z^{2d}_0$ consisting of those multi-integers $(h,k)\in\mathbb Z_0^{2d}$ which have exactly $\ell$ components $h_j$'s and $k_j$'s different from zero. Therefore
\[
\mathbb Z^{2d}_0=\bigcup\limits_{\ell=1}^{2d}\mathbb Z^{2d}_{(\ell)}\qquad\mbox{and}\qquad\mathbb Z^{2d}_{(\ell_1)}\cap\mathbb Z^{2d}_{(\ell_2)}=\emptyset\,\,\,\mbox{for}\,\,\,\ell_1\neq\ell_2\,.
\]
Let us first consider the most simple case of $\sum\limits_{(h,k)\in \mathbb Z^{2d}_{(1)}}a_{(h,k)}$, where $\mathbb Z^{2d}_{(1)}$  is the set of multi-integers $(h,k)$ with all components but one components equal to zero. All multi-integers $(h_1,\underbrace{0,\dots,0}_{d-1},\underbrace{0,\dots,0}_{d})$, for every $h_1\in\mathbb Z\setminus\left\{0\right\}$, will belong to $\mathbb Z^{2d}_{(1)}$, and for the sum of the related terms we get
\begin{equation}\label{1-d sum}
	\sum\limits_{h_1\in\mathbb Z\setminus\{0\}}a_{(h_1,0,\dots,0,0,\dots,0)}=\sum\limits_{h_1\in\mathbb Z\setminus\{0\}}\left(1+\left\vert \frac{h_1}{\alpha_1}\right\vert\right)^{-1-\frac{1+\varepsilon}{2d}}\,,
\end{equation}
where the one-dimensional sum in the right-hand side is estimated by 
\begin{equation}\label{1-d est alpha}
	\begin{split}
		\sum\limits_{h_1\in\mathbb Z\setminus\{0\}}&\left(1+\left\vert \frac{h_1}{\alpha_1}\right\vert\right)^{-1-\frac{1+\varepsilon}{2d}}=2\sum\limits_{h_1=1}^{+\infty}\left(1+\frac{h_1}{\alpha_1}\right)^{-1-\frac{1+\varepsilon}{2d}}\\
		&\le 2\int_0^{+\infty}\left(1+\frac{y}{\alpha_1}\right)^{-1-\frac{1+\varepsilon}{2d}}dy=\frac{4d}{1+\varepsilon}\alpha_1\,.
	\end{split}
\end{equation}
The whole set $\mathbb Z^{2d}_{(1)}$ is  obtained by letting only one among the indices $h_j$'s, $k_j$'s to be different from zero. Thus a similar one-dimensional summation as in \eqref{1-d sum} is involved in $\sum\limits_{(h,k)\in \mathbb Z^{2d}_{(1)}}a_{(h,k)}$, where $h_1$ and $\alpha_1$ will be respectively replaced by $h_2$, $\alpha_2$, ..., $h_d$, $\alpha_d$, $k_1$, $\beta_1$, ..., $k_d$, $\beta_d$, and will be estimated as in \eqref{1-d est alpha}.
\newline
For any integer $j=1,\dots, d$ one computes the same as above with $h_j$, $\alpha_j$ or $k_j$, $\beta_j$ replacing $h_1$, $\alpha_1$, and similarly as before
\begin{equation}\label{1-d est beta}
	\begin{split}
		\sum\limits_{k_j\in\mathbb Z\setminus\{0\}}&\left(1+\left\vert\frac{k_j}{\beta_j}\right\vert\right)^{-1-\frac{1+\varepsilon}{2d}}\le\frac{4d}{1+\varepsilon}\beta_j\,,\quad j=1,\dots,d\,.
	\end{split}
\end{equation}
In summary, we get
\begin{equation}\label{Z1 est}
	\sum\limits_{(h,k)\in\mathbb Z^{2d}_{(1)}}a_{(h,k)}\le\frac{4d}{1+\varepsilon}\sum\limits_{j=1}^d(\alpha_j+\beta_j)\,.
\end{equation}
Let us estimate now the sum
\begin{equation}\label{Z2}
	\sum\limits_{(h,k)\in\mathbb Z^{2d}_{(2)}}a_{(h,k)}\,.
\end{equation}
Here above $\mathbb Z^{2d}_{(2)}$ consists, by definition, of all multi-integers $(h,k)$ where only two components are different from zero. To fix the ideas, let us consider the case where all components of $h$ and $k$, but $h_1$ and $h_2$, are equal to zero. Then the sum of the corresponding $a_{(h,k)}$'s becomes
\[
\begin{split}
	\Sigma_{h_1,h_2}:&=\sum\limits_{h_1, h_2\in\mathbb Z\setminus\{0\}}a_{(h_1,h_2,{\small \underbrace{0,\dots,0}_{d-2},\underbrace{0\dots,0}_{d}})}\\
	&=\!\!\sum\limits_{h_1, h_2\in\mathbb Z\setminus\{0\}}\!\!\!\left(1+\left\vert \frac{h_1}{\alpha_1}\right\vert\right)^{-1-\frac{1+\varepsilon}{2d}}\!\!\left(1+\left\vert \frac{h_2}{\alpha_2}\right\vert\right)^{-1-\frac{1+\varepsilon}{2d}}\,.
\end{split}
\]
By interchanging the sum and the product, we rearrange $\Sigma_{h_1,h_2}$ as follows
\begin{equation*}\label{Sigma h1h2}
	\Sigma_{h_1, h_2}=\sum\limits_{h_1\in\mathbb Z\setminus\{0\}}\left(1+\left\vert \frac{h_1}{\alpha_1}\right\vert\right)^{-1-\frac{1+\varepsilon}{2d}}\!\!\sum\limits_{h_2\in\mathbb Z\setminus\{0\}}\left(1+\left\vert \frac{h_2}{\alpha_2}\right\vert\right)^{-1-\frac{1+\varepsilon}{2d}}
\end{equation*}
and use again \eqref{1-d est alpha} for each of the sums over $h_1$ and $h_2$ above, to get
\begin{equation}\label{est Sigma h1h2}
	\Sigma_{h_1, h_2}\le\left(\frac{4d}{1+\varepsilon}\right)^2\alpha_1\alpha_2\,.
\end{equation}
It is clear that a similar sum as $\Sigma_{h_1,h_2}$ is involved within the sum \eqref{Z2}, corresponding to any way we can select two (and only two) integer components amongst all $h_j$'s and $k_j$'s, to be different from zero. The corresponding sums, say $\Sigma_{h_i, h_j}$, $\Sigma_{k_i,k_j}$ with $1\le i<j\le d$ or $\Sigma_{h_i, k_j}$ with $1\le i,j\le d$, will be estimated similarly to \eqref{est Sigma h1h2}, by
\[
\Sigma_{h_i, h_j}\le \left(\frac{4d}{1+\varepsilon}\right)^2\alpha_i\alpha_j\,,\quad \Sigma_{k_i, k_j}\le\left(\frac{4d}{1+\varepsilon}\right)^2\beta_i\beta_j\,,\quad \Sigma_{h_i, k_j}\le\left(\frac{4d}{1+\varepsilon}\right)^2\alpha_i\beta_j\,.
\]
Eventually, summing over all different $\Sigma$'s as above we end up with
\begin{equation*}\label{Z2 est}
	\begin{split}
		\sum\limits_{(h,k)\in\mathbb Z^{2d}_{(2)}}&a_{(h,k)}\le\left(\frac{4d}{1+\varepsilon}\right)^2\left(\sum\limits_{1\le j_1,j_2\le d\,,\,\,j_1<j_2}\!\!\!\alpha_{j_1}\alpha_{j_2}\right.\\
		&\left.+\sum\limits_{1\le j_1,j_2\le d}\alpha_{j_1}\beta_{j_2}+\sum\limits_{1\le j_1,j_2\le d\,,\,\,j_1<j_2}\!\!\!\beta_{j_1}\beta_{j_2}\right)\,.
	\end{split}
\end{equation*}
In a similar way one can go on to estimate each of the sums $\sum\limits_{(h,k)\in\mathbb Z^{2d}_{(\ell)}}a_{(h,k)}$ as the integer $\ell$ increases from $1$ to $2d$. At the general step the resulting estimate will be
\begin{equation}\label{Zl est}
	\begin{split}
		&\sum\limits_{(h,k)\in\mathbb Z^{2d}_{(\ell)}}a_{(h,k)}\\
		&\le\left(\frac{4d}{1+\varepsilon}\right)^\ell\left(\sum\limits_{1\le j_1<\dots<j_\ell\le d}\!\!\!\alpha_{j_1}....\alpha_{j_\ell}+\dots+\sum\limits_{1\le j_1<\dots<j_\ell\le d}\!\!\!\beta_{j_1}....\beta_{j_\ell}\right)\,,
	\end{split}
\end{equation}
where the intermediate dots in the right-hand side above are used to mean any other sum of \lq\lq mixed\rq\rq products of $\ell$ numbers amongst both $\alpha_j$'s and $\beta_i$'s, occurring in all possible combinations. Let us notice that when the integer $\ell\le 2d$ is larger than $d$ then only products of both $\alpha_j$ and $\beta_i$ factors may occur in the right-hand side above.
\newline
In particular, as regards to the set $\mathbb Z^{2d}_{(2d)}$ of multi-integers $(h,k)$ with all their $2d$ scalar components different from zero, we will get
\begin{equation*}\label{Z2d est}
	\sum\limits_{(h,k)\in\mathbb Z^{2d}_{(2d)}}a_{(h,k)}\le\left(\frac{4d}{1+\varepsilon}\right)^{2d}\prod\limits_{j=1}^d\alpha_j\beta_j\,.
\end{equation*}
According to \eqref{Z split}, we sum up the estimates \eqref{Zl est} collected above to get
\begin{equation}\label{Sum est_0}
	\begin{split}
		&\sum\limits_{(h,k)\in\mathbb Z^{2d}_0}\prod\limits_{j=1}^d\left(1+\left\vert \frac{h_j}{\alpha_j}\right\vert\right)^{-1-\frac{1+\varepsilon}{2d}}\left(1+\left\vert\frac{k_j}{\beta_j}\right\vert\right)^{-1-\frac{1+\varepsilon}{2d}}\\
		&\le\sum\limits_{\ell=1}^{2d}\left(\frac{4d}{1+\varepsilon}\right)^\ell\left(\sum\limits_{1\le j_1<\dots<j_\ell\le d}\!\!\!\alpha_{j_1}....\alpha_{j_\ell}+\dots+\sum\limits_{1\le j_1<\dots<j_\ell\le d}\!\!\!\beta_{j_1}....\beta_{j_\ell}\right)\,.
	\end{split}
\end{equation}
Let us set $\theta:=\max\limits_{1\le j\le d}\left\{\alpha_j,\beta_j\right\}$. Then the products of $\ell$ factors $\alpha_{j_1}\dots\alpha_{j_\ell}$, $\beta_{j_1}\dots\beta_{j_\ell}$ and the mixed products involving both $\alpha_j$'s and $\beta_j$'s, in all possible combination (hidden behind the dots in the right-hand side above) are always majorized by $\theta^\ell$. The number of the aforementioned products of $\ell$ factors is equal to the number of {\em combinations of $2d$ elements with order $\ell$}, that is $\binom{2d}{\ell}$. Then 
\begin{equation}\label{Sum est_1}
	\sum\limits_{(h,k)\in\mathbb Z^{2d}_0}\prod\limits_{j=1}^d\left(1+\left\vert \frac{h_j}{\alpha_j}\right\vert\right)^{-1-\frac{1+\varepsilon}{2d}}\left(1+\left\vert\frac{k_j}{\beta_j}\right\vert\right)^{-1-\frac{1+\varepsilon}{2d}}\le\sum\limits_{\ell=1}^{2d}\binom{2d}{\ell}\left(\frac{4d}{1+\varepsilon}\theta\right)^\ell
\end{equation}   
follows from \eqref{Sum est_0}. From Newton's binomial formula we compute
\begin{equation}\label{New form}
	\begin{split}
		\sum\limits_{\ell=1}^{2d}&\binom{2d}{\ell}\left(\frac{4d}{1+\varepsilon}\theta\right)^\ell\\
		&=\sum\limits_{\ell=0}^{2d}\binom{2d}{\ell}\left(\frac{4d}{1+\varepsilon}\theta\right)^\ell-1=\left(1+\frac{4d}{1+\varepsilon}\theta\right)^{2d}-1
	\end{split}
\end{equation}   
Gathering \eqref{Vg sym est 0}, \eqref{Sum est_1} and \eqref{New form}, we obtain
\begin{equation}\label{Vg main est 2}
	\sum\limits_{(h,k)\in\mathbb Z^{2d}_0}\left\vert V_g\gamma\left(-\frac{k}{\beta},\frac{h}{\alpha}\right)\right\vert\le C_d\left[\left(1+\frac{4d}{1+\varepsilon}\theta\right)^{2d}-1\right]\mathcal K^\varepsilon_{\gamma, \hat\gamma}\mathcal K^\varepsilon_{g, \hat g}\,.
\end{equation}
Estimate \eqref{Vg main est 2} provides a bound of \eqref{Vggamma series} by a quite explicit function of the \lq\lq maximal mesh\rq\rq $\theta$ of the lattice  associated to the Gabor operator $S^{\alpha, \beta}_{g,\gamma}$. 

\medskip
\noindent
In view of Proposition \ref{InvGf_prop:1}, the invertibility of the operator $S^{\alpha, \beta}_{g,\gamma}$ in $\mathcal L(L^d)$ is guaranteed, provided that the right-hand side of estimate \eqref{Vg main est 2} is assumed to be smaller than $\left\vert(\gamma,g)_{L^2}\right\vert$, see \eqref{InvGf_eqt:2} and recall that the constant $C$ involved there is the one coming from tfs invariance \eqref{PST8}, which is $1$ in the case when $X=L^2(\mathbb R^d)$. This amounts to
\[
C_d\left[\left(1+\frac{4d}{1+\varepsilon}\theta\right)^{2d}-1\right]\mathcal K^\varepsilon_{\gamma, \hat\gamma}\mathcal K^\varepsilon_{g, \hat g}<\vert(\gamma, g)_{L^2}\vert\,.
\]
If we solve the above inequality with respect to $\theta$, we  get the smallness condition
\begin{equation}\label{stima_theta_sym}
	\theta<\frac{1+\varepsilon}{4d}\left[\left(\frac{\vert(\gamma,g)_{L^2}\vert}{C_d\mathcal K^\varepsilon_{\gamma,\hat\gamma}\mathcal K^\varepsilon_{g,\hat g}}+1\right)^{\frac1{2d}}-1\right]\,.
\end{equation}

\medskip
\noindent
Under the assumptions above, we have so proved the following
\begin{proposition}\label{prop_inv:2}
	For any $\varepsilon>0$, let $\gamma, g\in\mathcal S^\prime(\mathbb R^d)$ satisfy \eqref{g-gamma}, \eqref{hat g-gamma} and $(\gamma, g)_{L^2}\neq 0$. If the positive numbers $\alpha_j$, $\beta_j$ for $j=1,\dots, d$ are chosen such that
	\begin{equation}\label{stima theta fin_improved}
		\theta:=\max\limits_{1\le j\le d}\left\{\alpha_j,\beta_j\right\}<\frac{1+\varepsilon}{4d}\left[\left(\frac{\vert(\gamma,g)_{L^2}\vert}{C_d\mathcal K^\varepsilon_{\gamma, \hat\gamma}\mathcal K^\varepsilon_{g, \hat g}}+1\right)^{\frac1{2d}}-1\right]\,,
	\end{equation}
	with the constant $C_d$ defined in \eqref{cst} and $\mathcal K_{\gamma, \hat\gamma}$, $\mathcal K_{g,\hat g}$ defined by \eqref{L-inf-eps_sym eqt} and \eqref{Linf-eps_eqt:3}, then the generalized Gabor operator $S^{\alpha, \beta}_{g,\gamma}$ related to $L=diag(\alpha_1,\dots,\alpha_d,\beta_1,\dots,\beta_d)$ is invertible as an operator in $\mathcal L(L^2)$. Moreover, one has
	\begin{equation}\label{est inv_improved}
		\Vert \left(S^{\alpha, \beta}_{g,\gamma}\right)^{-1}\Vert_{\mathcal L(L^2)}\le\frac{\prod\limits_{j=1}^d\alpha_j\beta_j}{2\vert(\gamma, g)_{L^2}\vert-C_d\left[\left(1+\frac{4d\theta}{1+\varepsilon}\right)^{2d}-1\right]\mathcal K^\varepsilon_{\gamma, \hat\gamma}\mathcal K^\varepsilon_{g, \hat g}} 
	\end{equation}
\end{proposition}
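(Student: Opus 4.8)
The plan is to obtain Proposition \ref{prop_inv:2} as the specialization of the abstract invertibility criterion of Proposition \ref{InvGf_prop:1} to the Hilbert space $X=L^2(\mathbb R^d)$ and the diagonal lattice \eqref{InvGf_eqt:1}, fed with the master decay estimate \eqref{Vg main est 2} already established above. First I would record that on $X=L^2(\mathbb R^d)$ the time-frequency shifts are unitary, so the invariance \eqref{PST8} holds with $v\equiv 1$ and $C=1$; accordingly the weighted series in the hypothesis \eqref{InvGf_eqt:2} of Proposition \ref{InvGf_prop:1} collapses to the unweighted one, and by the diagonal identity $\mathcal J L^{-T}\kappa=\left(-\tfrac{k}{\beta},\tfrac{h}{\alpha}\right)$ for $\kappa=(h,k)$ it becomes exactly the series \eqref{Vggamma series}. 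Before invoking that proposition I would verify its standing hypotheses: that $\gamma,\hat g\in L^1(\mathbb R^d)$, as required in \eqref{Gf_eqt:0}, and that $(\gamma,g)_{L^2}\neq 0$. The integrability follows from \eqref{g-gamma} and \eqref{hat g-gamma} through Lemma \ref{lemLinf-eps}, see \eqref{Linf-eps:3} (applied to $\gamma,g$ and to $\hat g$ respectively), while $(\gamma,g)_{L^2}\neq 0$ is assumed.

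Next I would impose the sufficient condition of Proposition \ref{InvGf_prop:1}, which in this setting reads
\begin{equation*}
\sum\limits_{(h,k)\in\mathbb Z^{2d}_0}\left\vert V_g\gamma\left(-\tfrac{k}{\beta},\tfrac{h}{\alpha}\right)\right\vert<\vert(\gamma,g)_{L^2}\vert\,,
\end{equation*}
and bound its left-hand side from above by the right-hand side of \eqref{Vg main est 2}. This reduces the whole matter to the single scalar inequality
\begin{equation*}
C_d\left[\left(1+\tfrac{4d}{1+\varepsilon}\theta\right)^{2d}-1\right]\mathcal K^\varepsilon_{\gamma,\hat\gamma}\mathcal K^\varepsilon_{g,\hat g}<\vert(\gamma,g)_{L^2}\vert\,.
\end{equation*}
Solving it for $\theta$ is elementary: divide by the positive factor $C_d\mathcal K^\varepsilon_{\gamma,\hat\gamma}\mathcal K^\varepsilon_{g,\hat g}$, isolate the power $\left(1+\tfrac{4d}{1+\varepsilon}\theta\right)^{2d}$, take the $(2d)$-th root of both positive sides and subtract $1$; this produces precisely the smallness bound \eqref{stima theta fin_improved}. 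Hence, whenever $\theta$ obeys \eqref{stima theta fin_improved}, Proposition \ref{InvGf_prop:1} guarantees the invertibility of $S^{\alpha,\beta}_{g,\gamma}$ in $\mathcal L(L^2)$.

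Finally I would read off the norm bound \eqref{est inv_improved} from the inverse-norm estimate \eqref{InvGf_eqt:2.1}. With $C=1$ its denominator is $2\vert(\gamma,g)_{L^2}\vert-\sigma_{L,g,\gamma}$, and for the diagonal matrix $\vert\det L\vert=\prod_{j=1}^d\alpha_j\beta_j$. Since $\sigma_{L,g,\gamma}$ equals the series \eqref{Vggamma series} (again because $v\equiv 1$), replacing it by its upper bound from \eqref{Vg main est 2} only shrinks the denominator and thus enlarges the quotient, yielding \eqref{est inv_improved}; the condition \eqref{stima theta fin_improved} is exactly what keeps that denominator strictly positive. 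The only point needing a little care is the direction of this last monotonicity step together with the positivity of the denominator, but since all the analytic work—the decay estimate \eqref{Linf-eps_eqt:8 sym} and the term-by-term summation producing \eqref{Vg main est 2}—has already been carried out, no real obstacle remains: the proposition is essentially a bookkeeping synthesis of the preceding estimates with the abstract criterion.
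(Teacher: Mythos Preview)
Your proposal is correct and mirrors the paper's own argument: the paper establishes \eqref{Vg main est 2}, specializes Proposition~\ref{InvGf_prop:1} to $X=L^2(\mathbb R^d)$ (so $v\equiv 1$, $C=1$), imposes the scalar inequality you wrote, solves it for $\theta$ to get \eqref{stima theta fin_improved}, and reads off \eqref{est inv_improved} from \eqref{InvGf_eqt:2.1} with $\vert\det L\vert=\prod_j\alpha_j\beta_j$. One small bookkeeping caveat: $\sigma_{L,g,\gamma}$ in \eqref{bGf_eqt:1} is summed over all of $\mathbb Z^{2d}$, not $\mathbb Z^{2d}_0$, so it equals $\vert(\gamma,g)_{L^2}\vert$ plus the series \eqref{Vggamma series}; keep that extra $\kappa=0$ term when you substitute into the denominator of \eqref{InvGf_eqt:2.1}.
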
 
\begin{corollary}\label{GFB}
Consider $g\in \mathcal S'(\mathbb R^d)$ such that for some $\varepsilon >0$, 
$x^\beta\partial^\alpha g, x^\beta\partial_{x_j}\partial^\alpha g$, $x_jx^\beta\partial^\alpha g\in L^\infty_{\varepsilon}(\mathbb R^d)$ and $\omega^\beta\partial^\alpha \hat g, \omega^\beta\partial_{\omega_j}\partial^\alpha \hat g, \omega_j\omega^\beta\partial^\alpha \hat g\in L^\infty_{\varepsilon}(\mathbb R^d)$, for any $\alpha,\beta\in T^d$ and every integer $1\le j\le d$. Assume that the positive numbers $\alpha_j$, $\beta_j$ for $j=1,\dots, d$ satisfy
\begin{equation}\label{stima theta fin_improved_1}
		\theta:=\max\limits_{1\le j\le d}\left\{\alpha_j,\beta_j\right\}<\frac{1+\varepsilon}{4d}\left[\left(\frac{\Vert g\Vert^2_{L^2}}{C_d \mathcal K^\varepsilon_{g}\mathcal K^\varepsilon_{\hat g}}+1\right)^{\frac1{2d}}-1\right]\,,
	\end{equation}
where $\mathcal K^\varepsilon_{g}$ and $\mathcal K^\varepsilon_{\hat g}$ are the constants defined in \eqref{Linf-eps_eqt:3}.
\newline
Then the Gabor system $\mathcal G(g,\alpha, \beta):=\{M_{\beta k}T_{\alpha h} g\}_{h,k\in \mathbb Z^d}$ is a frame. Moreover we have as possible frame bounds
\begin{equation}\label{bounds}
\begin {array}{l}
A=\frac{2\Vert g\Vert^2_{L^2}-C_d\left[\left(1+\frac{4d\theta}{1+\varepsilon}\right)^{2d}-1\right]\mathcal K^\varepsilon_{g}\mathcal K^\varepsilon_{\hat g}} {\prod\limits_{j=1}^d\alpha_j\beta_j}\\
\\
B=\frac{2}{\prod\limits_{j=1}^d\alpha_j\beta_j} \Vert g\Vert^2_{L^2}.
\end{array}
\end{equation}

\end{corollary}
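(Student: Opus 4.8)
The plan is to obtain Corollary \ref{GFB} by specializing Proposition \ref{prop_inv:2} to the case of coincident windows $\gamma=g$ and then converting the invertibility of the Gabor operator into the frame property via Theorem \ref{THEFRAME}. First I would set $\gamma=g$ in Proposition \ref{prop_inv:2}: the regularity hypotheses imposed on $g$ and $\hat g$ in the corollary are exactly the assumptions \eqref{g-gamma}, \eqref{hat g-gamma} with $\gamma=g$. Moreover $(\gamma,g)_{L^2}=\Vert g\Vert_{L^2}^2$, and from the definitions of $\mathcal K^{\varepsilon}_{\gamma,\hat\gamma}$ and $\mathcal K^{\varepsilon}_{g,\hat g}$ one gets at once
\[
\mathcal K^{\varepsilon}_{\gamma,\hat\gamma}\,\mathcal K^{\varepsilon}_{g,\hat g}=\mathcal K^\varepsilon_g\,\mathcal K^\varepsilon_{\hat g}\,.
\]
Hence the smallness condition \eqref{stima theta fin_improved_1} is precisely \eqref{stima theta fin_improved} read with $\gamma=g$, so Proposition \ref{prop_inv:2} yields that the Gabor operator $S^{\alpha,\beta}_{g,g}$ is invertible in $\mathcal L(L^2(\mathbb R^d))$, together with the inverse estimate \eqref{est inv_improved}.

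Next I would identify $S^{\alpha,\beta}_{g,g}$ with the classical Gabor frame operator of $\mathcal G(g,\alpha,\beta)$. For $L=\mathrm{diag}(\alpha_1,\dots,\alpha_d,\beta_1,\dots,\beta_d)$ and $\kappa=(h,k)$ one has $L\kappa=(\alpha h,\beta k)$, so $\pi_{L\kappa}g=M_{\beta k}T_{\alpha h}g$ and $\mathcal G(g,\alpha,\beta)=\{\pi_{L\kappa}g\}_{\kappa\in\mathbb Z^{2d}}$; re-indexing $\kappa\mapsto-\kappa$ in the definition \eqref{GF oprt} of $S^{\alpha,\beta}_{g,g}$ turns it into $f\mapsto\sum_{h,k}(f,M_{\beta k}T_{\alpha h}g)M_{\beta k}T_{\alpha h}g$, which is the frame operator \eqref{Sg}. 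Since $S^{\alpha,\beta}_{g,g}$ is bounded and invertible on $L^2$, the equivalence (i)$\Leftrightarrow$(iii) of Theorem \ref{THEFRAME} gives that $\mathcal G(g,\alpha,\beta)$ is a frame.

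It then remains to produce the explicit bounds \eqref{bounds}. By Theorem \ref{THEFRAME} any $A\le 1/\Vert S_g^{-1}\Vert$ and any $B\ge\Vert S_g\Vert$ are admissible frame bounds. For the lower bound I would invoke \eqref{est inv_improved} with $\gamma=g$, which reads $\Vert S_g^{-1}\Vert\le \prod_j\alpha_j\beta_j\big/\big(2\Vert g\Vert_{L^2}^2-C_d[(1+\tfrac{4d\theta}{1+\varepsilon})^{2d}-1]\mathcal K^\varepsilon_g\mathcal K^\varepsilon_{\hat g}\big)$; its reciprocal is exactly the value $A$ in \eqref{bounds}. Note that \eqref{stima theta fin_improved_1} is equivalent to $C_d[(1+\tfrac{4d\theta}{1+\varepsilon})^{2d}-1]\mathcal K^\varepsilon_g\mathcal K^\varepsilon_{\hat g}<\Vert g\Vert_{L^2}^2$, which keeps this $A$ strictly positive. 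For the upper bound I would use the boundedness estimate \eqref{bGf_eqt:2} of Proposition \ref{bGf_prop:1} (with $C=1$, since $X=L^2$ is tfs invariant with $v\equiv1$), namely $\Vert S_g\Vert\le\sigma_{L,g}\big/\prod_j\alpha_j\beta_j$, combined with \eqref{Vg main est 2} for $\gamma=g$: the $\kappa=0$ term of $\sigma_{L,g}$ equals $|V_gg(0)|=\Vert g\Vert_{L^2}^2$, while the equivalence above and \eqref{Vg main est 2} bound the remaining terms by $\sum_{\kappa\neq0}|V_gg(\mathcal J L^{-T}\kappa)|\le C_d[(1+\tfrac{4d\theta}{1+\varepsilon})^{2d}-1]\mathcal K^\varepsilon_g\mathcal K^\varepsilon_{\hat g}<\Vert g\Vert_{L^2}^2$; hence $\Vert S_g\Vert< 2\Vert g\Vert_{L^2}^2\big/\prod_j\alpha_j\beta_j$, the value $B$ in \eqref{bounds}. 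Alternatively, once $\sum_{\kappa\neq0}|V_gg(\mathcal J L^{-T}\kappa)|<\Vert g\Vert_{L^2}^2$ is verified, the frame property and the bound $B$ follow directly from Corollary \ref{CORFRAME}, whose hypothesis \eqref{CORFRAME_eqt:1} is precisely this inequality.

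The genuinely analytic work — the pointwise decay of $V_g\gamma$ and the resulting series estimate \eqref{Vg main est 2} — is already done in Proposition \ref{propLinf-eps} and in the computations leading to Proposition \ref{prop_inv:2}, so the present argument is largely organizational. The step demanding most care is the bookkeeping of the $\kappa=0$ term: one must keep track of whether $\sigma_{L,g}$ denotes the full series \eqref{bGf_eqt:1} (used in the boundedness estimate giving $B$) or its restriction to $\kappa\neq0$ (governing the invertibility threshold), since the two differ exactly by $\Vert g\Vert_{L^2}^2$, and it is this difference that accounts for the asymmetry between the numerators $2\Vert g\Vert_{L^2}^2-(\cdots)$ and $2\Vert g\Vert_{L^2}^2$ in \eqref{bounds}.
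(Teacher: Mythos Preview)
The proposal is correct and follows exactly the route implicit in the paper: the corollary is stated without proof immediately after Proposition \ref{prop_inv:2}, and your argument---specialize to $\gamma=g$, identify $S^{\alpha,\beta}_{g,g}$ with the frame operator of $\mathcal G(g,\alpha,\beta)$, and read off the bounds via Theorem \ref{THEFRAME}/Corollary \ref{CORFRAME} together with \eqref{Vg main est 2} and \eqref{est inv_improved}---is precisely that implicit derivation. Your closing remark about the $\kappa=0$ bookkeeping (that $\sigma_{L,g}=\Vert g\Vert_{L^2}^2+\sum_{\kappa\neq 0}|V_gg(\mathcal J L^{-T}\kappa)|$, whence $\sigma_{L,g}<2\Vert g\Vert_{L^2}^2$ under the smallness assumption) is exactly what justifies the upper bound $B$ as stated in \eqref{CORFRAMEeqt:2} and \eqref{bounds}.
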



\appendix
\section{Proof of Proposition \ref{WS-prop:1}}\label{APP}
We need some preparatory lemmata.
\smallskip

For any $x\in\mathbb R$ we denote by $[x]$ the greatest integer number $\le x$, so that the inequalities $[x]\le x<[x]+1$ trivially follow.  
\begin{lemma}\label{WS-lemma:1}
For $a, b\in\mathbb R$ with $a\le b$, let $\#_{a,b}$ be the number of integers belonging to the closed interval $[a,b]$. Then $\#_{a,b}\ge 1$ if and only if $a\le[b]$. Under the previous condition, we also have
\begin{equation}\label{lem:1-eq:1}
\#_{a,b}\le [b-a]+1\,.
\end{equation}
\end{lemma}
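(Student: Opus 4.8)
The plan is to treat the equivalence and the upper bound separately, relying throughout on the defining property $[x]\le x<[x]+1$ of the floor, together with the elementary observation that for an integer $n$ the inequality $n\le b$ holds if and only if $n\le[b]$.

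For the equivalence $\#_{a,b}\ge 1\iff a\le[b]$, I would first record the convenient reformulation: an integer $n$ lies in the closed interval $[a,b]$ precisely when $a\le n\le b$, and since $n$ is an integer the constraint $n\le b$ is equivalent to $n\le[b]$. Hence the integers belonging to $[a,b]$ are exactly those $n\in\mathbb Z$ with $a\le n\le[b]$. Both implications then fall out at once: if $a\le[b]$, then $[b]$ is itself an integer lying in $[a,b]$ (its upper bound $[b]\le b$ being automatic), so $\#_{a,b}\ge 1$; conversely, if some integer $n$ lies in $[a,b]$, then $a\le n\le[b]$ forces $a\le[b]$.

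For the bound \eqref{lem:1-eq:1}, I would work under the standing hypothesis $a\le[b]$ (so that $\#_{a,b}\ge 1$). Set $M:=[b]$, the largest integer $\le b$, and let $m$ denote the smallest integer $\ge a$. By the reformulation above, the integers contained in $[a,b]$ are exactly $m,m+1,\dots,M$, so that $\#_{a,b}=M-m+1$. Since $m\ge a$ and $M\le b$, I obtain $M-m\le b-a$; and because $M-m$ is an integer, it cannot exceed $[b-a]$, the largest integer $\le b-a$. Therefore $M-m\le[b-a]$, whence $\#_{a,b}=M-m+1\le[b-a]+1$, as required.

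The only delicate point — and really the crux of the bound — is the passage from the real inequality $M-m\le b-a$ to the integer inequality $M-m\le[b-a]$, which is legitimate precisely because $M-m$ is an integer; the remainder is routine bookkeeping of which integers fall inside the interval, and presents no genuine obstacle.
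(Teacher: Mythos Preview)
Your proof is correct. The equivalence argument is essentially identical to the paper's. For the bound \eqref{lem:1-eq:1} you take a slightly different route: you compute $\#_{a,b}$ exactly as $[b]-\lceil a\rceil+1$ and then observe that the integer $[b]-\lceil a\rceil$ is at most $b-a$, hence at most $[b-a]$. The paper instead argues by contradiction, noting that $[b-a]+2$ consecutive integers in $[a,b]$ would force $[a,b]$ to contain $[b-a]+1$ unit subintervals and hence have length at least $[b-a]+1>b-a$. Your direct argument is a bit cleaner and even yields the exact count along the way; the paper's contradiction argument avoids naming the ceiling but is otherwise equivalent in substance.
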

\begin{proof}
As for the first assertion, it is clear from the definition of $[b]$ that $a\le [b]$ means that the real interval $[a,b]$ contains at least the integer $[b]$; conversely, if $k\in\mathbb Z$ is an integer belonging to $[a,b]$, the inequalities $a\le k\le[b]$ trivially follow  from definition of $[b]$ once again.
\newline
Assuming now that $a\le[b]$, let us turn to the second assertion of the lemma. Inequality \eqref{lem:1-eq:1} becomes trivial when $a=b$ (in which case the real interval reduces to the singleton $\{b\}$ and $[b]\equiv b$ is the only integer contained therein). Now for $a<b$ let us suppose there are at least $[b-a]+2$ subsequent integers that belong to $[a,b]$. This would imply that $[a,b]$ should contain at least the $[b-a]+1$ unitary sub-intervals bounded by the subsequent pairs of those integer numbers, giving rise to the contradiction $b-a\ge [b-a]+1$. Therefore \eqref{lem:1-eq:1} must be true.
\end{proof}
From \eqref{WS-norm}, it appears rather clear that proving the estimate \eqref{prop:1-eq:1} should be strictly related to evaluate the maximum number of points of the lattice $\Lambda:=L\mathbb Z^n$ that belong to the same cube $Q_r$ with given $r\in\mathbb Z^n$. The next result deals with the above question. 
\newline
In the sequel, let $L\in GL(n)$ be any real invertible matrix, which $(i,j)$-th entry will be denoted by $a_{i,j}$ for every $i,j=1,\dots,n$. Let $\mathcal L$ denote the {\em cofactor matrix} of $L$, which $(i,j)$-th entry will be denoted by $a^{i,j}$ for every $i,j=1,\dots,n$. Then as it is well-known
\begin{equation*}
L^{-1}=\frac1{det L}\mathcal L^T.
\end{equation*}
In agreement to the above, te $(i,j)$-th entry of $L^{-1}$ is given by $\displaystyle\frac{a^{j,i}}{det L}$ for all $i,j=1,\dots, n$. 
\begin{lemma}\label{WS-lemma:2}
For a real matrix $L\in GL(n)$ and a given vector $r=(r_1,\dots,r_n)\in\mathbb Z^n$, let $\#_{L,r}$ denote the number of elements of the discrete set
\begin{equation}\label{lem:2-eq:1}
Z_{L,r}:=\{\kappa\in\mathbb Z^n\,;\,\,\,L\kappa\in Q_r\}\,.
\end{equation}
Then we have that
\begin{equation}\label{lem:2-eq:2}
\#_{L,r}\le\prod_{j=1}^n\left(\left[\sum\limits_ {i=1}^{n}\left\vert\frac{a^{i,j}}{det L}\right\vert\right]+1\right)
\end{equation}
\end{lemma}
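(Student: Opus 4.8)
The plan is to reduce the count $\#_{L,r}$ to a coordinate-by-coordinate count of integers lying in the image of the cube $Q_r$ under $L^{-1}$. Since $L$ is invertible, the map $\kappa\mapsto L\kappa$ is injective, so $\#_{L,r}$ equals the number of integer vectors $\kappa$ lying in the parallelepiped $L^{-1}Q_r$; equivalently, writing $y:=L\kappa\in Q_r$, one has $\kappa=L^{-1}y$. Recalling that the $(j,i)$-th entry of $L^{-1}$ is $a^{i,j}/\det L$, the $j$-th coordinate of $\kappa$ is the affine function
\begin{equation*}
\kappa_j=(L^{-1}y)_j=\sum_{i=1}^n\frac{a^{i,j}}{\det L}\,y_i\,,\qquad j=1,\dots,n\,.
\end{equation*}

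Next I would estimate the range of each $\kappa_j$ as $y$ varies over $Q_r$. Since $y\in Q_r$ means $r_i\le y_i\le r_i+1$ for every $i$, each $y_i$ runs over an interval of length one; hence the linear combination $\kappa_j$ takes values in a real interval $[m_j,M_j]$ whose length is exactly the total variation of its coefficients,
\begin{equation*}
M_j-m_j=\sum_{i=1}^n\left\vert\frac{a^{i,j}}{\det L}\right\vert\,,
\end{equation*}
the extremes being attained by choosing $y_i=r_i$ or $y_i=r_i+1$ according to the sign of $a^{i,j}/\det L$. Thus every $\kappa\in Z_{L,r}$ has its $j$-th coordinate lying in $[m_j,M_j]$, and by Lemma \ref{WS-lemma:1}, in particular estimate \eqref{lem:1-eq:1}, the number of integers contained in $[m_j,M_j]$ does not exceed $\left[\sum_{i=1}^n\vert a^{i,j}/\det L\vert\right]+1$; this last bound holds trivially also in the degenerate case where the interval contains no integer at all.

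Finally, the key observation is that the constraints on the coordinates decouple: the set $Z_{L,r}$ is contained in the Cartesian product $\prod_{j=1}^n\bigl([m_j,M_j]\cap\mathbb Z\bigr)$ of the admissible integer values for each single coordinate. Consequently $\#_{L,r}$ is bounded above by the product of the one-dimensional counts just obtained, which is precisely the right-hand side of \eqref{lem:2-eq:2}, and the proof is complete.

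The main obstacle is really a matter of bookkeeping rather than of depth. One must make the variation-of-coefficients computation rigorous so that it produces exactly $\sum_{i}\vert a^{i,j}/\det L\vert$, and one must justify the final multiplication of the per-coordinate bounds through the containment $Z_{L,r}\subseteq\prod_j\bigl([m_j,M_j]\cap\mathbb Z\bigr)$. It is here that the injectivity of $L$ and the product structure combine to guarantee that no cancellation or overcounting occurs, so that taking the product of the separate bounds can only overestimate, never miss, any admissible $\kappa$.
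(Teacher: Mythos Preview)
Your proof is correct and follows essentially the same approach as the paper's: both pass from $Z_{L,r}$ to the parallelepiped $L^{-1}Q_r$, project onto each coordinate to obtain an interval whose length is exactly $\sum_i\vert a^{i,j}/\det L\vert$, apply Lemma~\ref{WS-lemma:1} to count integers in each interval, and then multiply by containment in the resulting box. The only cosmetic difference is that the paper names the bounding box $\mathcal R_r$ explicitly and splits the computation of the interval endpoints according to the sign of $a^{i,j}/\det L$, whereas you summarize this as a total-variation argument.
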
 
\begin{proof}
It is first convenient to notice that whatever is $\kappa\in\mathbb Z^n$, the condition 
\[
L\kappa\in Q_r
\]
is equivalent to 
\begin{equation}\label{lem:2-eq:3}
\kappa\in L^{-1}Q_r\,,
\end{equation}
in view of the invertibility of $L$, namely $Z_{L,r}$ is the set of all $\kappa\in\mathbb Z^n$ belonging to the set $\mathcal P_r:=L^{-1}Q_r$. It is easy to observe that the set $\mathcal P_r$ is parametrized by the following vector equation
\begin{equation}\label{lem:2-eq:4}
x=L^{-1}(r+t)\,\quad\mbox{for}\,\,\,t:=(t_1,\dots,t_n)\in Q=[0,1]^n\,,
\end{equation}
and geometrically represents the convex $n-$parallelogram, which faces are (generally) transversal to the coordinate hyperplanes of $\mathbb R^n$ and which vertices are the $2^n$ points attained from \eqref{lem:2-eq:4} as long as the vector parameter $t$ spans the subset $\{0,1\}^n$ of $Q$. It is then clear that $Z_{L,r}$ is included into the parallelogram, say $\mathcal R_r$, which faces are parallel to the coordinate hyperplanes, generated from the Cartesian product of the $n$ real intervals obtained from projecting all vertices of $\mathcal P_r$ orthogonally on each of the coordinate hyperplanes. To go on, first notice that the $j-$th coordinate of all vertices of $\mathcal P_r$ are the values computed from
\begin{equation}\label{lem:2-eq:5}
x_j^{(t)}=\frac1{det L}\left(a^{1,j}(r_1+t_1)+\dots+a^{n,j}(r_n+t_n)\right)\,,
\end{equation}   
for all possible $t=(t_1\,\dots, t_n)\in\{0,1\}^n$, for every $j=1,\dots,n$. If we set
\begin{equation}
x_j^{\min}:=\min\limits_{t\in\{0,1\}^n}\{x_j^{(t)}\}\quad\mbox{and}\quad x_j^{\max}:=\max\limits_{t\in\{0,1\}^n}\{x_j^{(t)}\}\,,
\end{equation}
then the parallelogram $\mathcal R_r$, which we are looking for, is defined as 
\begin{equation}\label{lem:2-eq:6}
\mathcal R_r:=[x_1^{\min}, x_1^{\max}]\times\dots\times[x_n^{\min}, x_n^{\max}]\,;
\end{equation}
of course $\mathcal P_r\subseteq\mathcal R_r$, so the number of $\kappa\in\mathbb Z^n$ contained in $\mathcal R_r$ will be certainly greater than or equal to the number $\#_{L,r}$ of members of $Z_{L,r}$.
\newline
Notice also that for any $j=1,\dots,n$ the coefficients $\displaystyle\frac{a^{1,j}}{det L},\dots,\displaystyle\frac{a^{n,j}}{det L}$ involved in \eqref{lem:2-eq:5} cannot be all zero, otherwise the matrix $L^{-1}$ should have some zero row contradicting the invertibility of $L^{-1}$ (that is the invertibility of $L$). 
\newline
Let $n_+$, $n_-$ and $n_0$ denote respectively the number of positive, negative and zero elements of the $j-$th row $\left\{\displaystyle\frac{a^{i,j}}{det L}\,,\,\,\,i=1,\dots,n\right\}$ (so that $n=n_++n_-+n_0$); assuming (without loss of generality) that $n_\pm$ and $n_0$ are all strictly positive and
\[
\begin{split}
& \frac{a^{i,j}}{det L}>0\,,\quad i=1,\dots,n_+\,,\\
& \frac{a^{i,j}}{det L}<0\,,\quad i=n_++1,\dots,n_++n_-\,,\\
& \frac{a^{i,j}}{det L}=0\,,\quad i=n_++n_-+1,\dots,n_++n_-+n_0=n\,,
\end{split}
\]
then \eqref{lem:2-eq:5} reduces to
\begin{equation}\label{lem:2-eq:7.0}
x_j^{(t)}=\sum\limits_ {i=1}^{n_+}\frac{a^{i,j}}{det L}(r_i+t_i)+\sum\limits_{i=n_++1}^{n_++n_-}\frac{a^{i,j}}{det L}(r_i+t_j)\,,
\end{equation} 
and it is easy to derive
\begin{equation}\label{lem:2-eq:7}
\begin{split}
& x_j^{\min}=\sum\limits_ {i=1}^{n_+}\frac{a^{i,j}}{det L}r_i+\sum\limits_{i=n_++1}^{n_++n_-}\frac{a^{i,j}}{det L}(r_i+1)\,,\\
& x_j^{\max}=\sum\limits_ {i=1}^{n_+}\frac{a^{i,j}}{det L}(r_i+1)+\sum\limits_{i=n_++1}^{n_++n_-}\frac{a^{i,j}}{det L}r_i\,.
\end{split}
\end{equation} 
In view of Lemma \ref{WS-lemma:1}, the number of integers belonging to the interval $[x_j^{\min}, x_j^{\max}]$, say $\#_j$, satisfies
\begin{equation}\label{lem:2-eq:8}
\#_j\le[x_j^{\max}-x_j^{\min}]+1
\end{equation} 
and for the lenght $x_j^{\max}-x_j^{\min}$ of the interval, we easily derive from \eqref{lem:2-eq:7}
\begin{equation}\label{lem:2-eq:9}
x_j^{\max}-x_j^{\min}=\sum\limits_ {i=1}^{n_+}\frac{a^{i,j}}{det L}-\sum\limits_{i=n_++1}^{n_++n_-}\frac{a^{i,j}}{det L}=\sum\limits_ {i=1}^{n}\left\vert\frac{a^{i,j}}{det L}\right\vert\,.
\end{equation}
Because of \eqref{lem:2-eq:6}, we  conclude that the number of points $\kappa\in\mathbb Z^n$ belonging to $\mathcal R_r$, say $\#_{\mathcal R_r}$, satisfies
\begin{equation}\label{lem:2-eq:10}
\#_{\mathcal R_r}\le\prod_{j=1}^n\left(\left[\sum\limits_ {i=1}^{n}\left\vert\frac{a^{i,j}}{det L}\right\vert\right]+1\right)
\end{equation}
from which \eqref{lem:2-eq:2} follows at once. 
\end{proof} 
We are now in the position to prove Proposition \ref{WS-prop:1}.

\medskip
\noindent
\begin{proof}[\textit {Proof of Proposition \ref{WS-prop:1}}]. Because the family of cubes $\{Q_r\}_{r\in\mathbb Z^n}$ is a covering of $\mathbb R^n$, for a given $f\in W(L^1_v)$ we can rearrange the terms of the series in the left-hand side of \eqref{prop:1-eq:1} so as to get 
\begin{equation}\label{prop:1-eq:2}
\sum\limits_{\kappa\in\mathbb Z^n}v(L\kappa)\vert f(L\kappa)\vert\le\sum\limits_{r\in\mathbb Z^n}\sum\limits_{\kappa\in Z_{L,r}}v(L\kappa)\vert f(L\kappa)\vert\,,
\end{equation}
where $Z_{L,r}$ was defined in \eqref{lem:2-eq:1}. Now for any given $r\in\mathbb Z^n$, in view of \eqref{WS-eq:1}
\[
v(L\kappa)\vert f(L\kappa)\vert\le M_v v(r)\Vert\chi_r f\Vert_{L^\infty}\,,
\]
where $M_v$ is the constant in \eqref{WS-eq:1}. Using the latter to majorize each term of the sum over $r$ in the right-hand side of \eqref{prop:1-eq:2} leads to
\[
\sum\limits_{r\in\mathbb Z^n}\sum\limits_{\kappa\in Z_{L,r}}v(L\kappa)\vert f(L\kappa)\vert\le M_v\sum\limits_{r\in\mathbb Z^n}v(r)\Vert\chi_r f\Vert_{L^\infty}\#_{L,r}\,,
\]
from which \eqref{prop:1-eq:1} follows at once, using Lemma \ref{WS-lemma:2} to estimate $\#_{L,r}$ in the right-hand side above, where the constant $C_{L,v}$ is given by
\eqref{prop:1-est:3}.
\end{proof}


\medskip

{\small
Gianluca Garello\\
Department of Mathematics \lq\lq G. Peano\rq\rq \\
University of Torino\\
Via Carlo Alberto 10, I-10123 Torino, Italy\\
gianluca.garello@unito.it}
\medskip

\noindent {\small
Alessandro Morando\\
INdAM Unit \& Department of Civil, Environmental, Architectural Engineering and Mathematics (DICATAM)\\
University of Brescia\\
Via Valotti 9, I-25133 Brescia, Italy\\
alessandro.morando@unibs.it}
\end{document}